\theoremstyle:=definition,remark,plain\do{%
        \expandafter\g@addto@macro\csname th@\theoremstyle\endcsname{%
            \addtolength\thm@preskip\parskip
            }%
        }
\newcommand{\B}{\mathbb B}
\newcommand{\mR}{\mathbb R}
\newcommand{\R}{\mathbb R}
\newcommand{\N}{\mathbb N}
\newcommand{\sB}{{\mathcal B}}
\newcommand{\sE}{{\mathcal E}}
\newcommand{\sT}{{\mathcal T}}
\newcommand{\rd}{{\rm d}}
\numberwithin{equation}{section}
\numberwithin{figure}{section}
\numberwithin{table}{section}
\newtheorem{theorem}{Theorem}[section]
\newtheorem{proposition}[theorem]{Proposition}
\newtheorem{corollary}[theorem]{Corollary}
\newtheorem{lemma}[theorem]{Lemma}
\newtheorem{remark}[theorem]{Remark}
\newtheorem{example}[theorem]{Example}
\title{Well-posedness of Lur'e systems with feedthrough}
\author{Authors}
\author{Chris Guiver\thanks{School of Computing, Engineering \& the Built Environment, Edinburgh Napier University, Merchiston Campus, Edinburgh, UK, email: {\tt c.guiver@napier.ac.uk}, ORCID: 0000-0002-6881-7020}\,
\thanks{Corresponding author}
\and Hartmut Logemann\thanks{Department of Mathematical Sciences,
University of Bath, Bath, UK, email: {\tt h.logemann@bath.ac.uk}}
}
\date{Preprint uploaded to arxiv \\ August 2025}
\begin{document}
\maketitle

%
%
%
%
\textbf{Abstract.} For a large class of Lur'e systems with time-varying nonlinearities and feedthrough we consider several well-posedness issues, namely:
existence, continuation, blow-up in finite-time, forward completeness and uniqueness of solutions. Lur'e systems with feedthrough are systems of forced, nonlinear ordinary differential equations coupled with a nonlinear algebraic equation determining the output
of the system. The presence of feedthrough means that the algebraic equation is implicit in the output, and, in general, the output may not be expressible by an analytic formula in terms of the state and the input. Simple examples illustrate that the well-posedness properties of such systems are not necessarily guaranteed by assumptions sufficient for the corresponding well-posedness properties of Lur'e systems without feedthrough. We provide
sufficient conditions for the well-posedness properties mentioned above, using global inversion
theorems from real analysis and tools from non-smooth analysis and differential inclusions. The theory is illustrated with examples.

%
%
\textbf{Keywords.} 
Differential inclusions, 
existence of solutions, 
feedthrough, 
global inversion theorems, 
Lur'e systems, 
nonlinear differential equations

%
%
{\bfseries MSC (2020).} 34A12, 34A34, 34A60, 93B52, 93C10.

%
%
\section{Introduction}
Lur'e systems are a common and important class of nonlinear control systems. Their stability theory, often termed absolute stability, has generated significant interest, and dates
back to the work of Soviet scholars Lur'e (also written in English as Lurie or Lurye) and Postnikov in the 1940s. In fact, the Aizerman conjecture and the research on stability
conditions for Lur'e systems it triggered stand at the beginning of
nonlinear control theory, see \cite{l06} for historical notes.\footnote[4]{\,As is well known,  the Aizerman conjecture is not true in general. However, the problem of identifying classes of
Lur'e systems for which the conjecture is true continues to be of considerable interest,
see, for example, \cite{dgt23,nmig19}. We also mention in this context that the complex Aizerman conjecture is true, see \cite[Section 5.6.3]{hp05}.}

The literature on Lur'e systems
is vast and detailed treatments of Lur'e systems and absolute stability theory can be found in many places, including \cite{ag64,a13,blme20,dv75,g05,hc08,k02,nt73,v93}. Whilst absolute stability theory addresses global asymptotic stability of unforced Lur'e systems, more recently, input-to-state stability properties of forced Lur'e systems have been investigated, see \cite{at02,gl20,gl24,glo19,jlr09,jlr11,sl15}.
Furthermore, current interest in Lur'e systems is in part owing to their appearance in various neural network architectures and in mathematical biology, see, for instance, \cite{cgdrb23,dgt24,rwm20} and \cite{bglt16,fgl21,fglp19,st13}, respectively, and the references therein.

In this paper, we consider well-posedness properties of the following general class of controlled (or forced) Lur'e differential equations:
 \begin{subequations}\label{eq:lure_ss}
 \begin{align}
 \dot x(t) & = Ax(t)+ Bf(t,y(t)) + B_{\rm e}v(t),\quad x(t_0)=x^0,\,\,\,  t\geq t_0\geq 0,
 \label{eq:lure_ss_a}\\
 y(t) & = Cx(t)+Df(t,y(t))+ D_{\rm e}v(t),\label{eq:lure_ss_b}
 \end{align}
 \end{subequations}
where $v, x, y$ denote the control (external forcing or disturbance), state and output variables, respectively. The model data $A,B,B_{\rm e}, C, D$ and $D_{\rm e}$ are appropriately sized matrices, and $f$ is a time-dependent nonlinearity. It is assumed that $f$ is a Caratheodory
function, that is, $f(t,\,\cdot\,)$ is continuous and $f(\,\cdot\,,\xi)$ is measurable for
each fixed $t\geq 0$ and each fixed $\xi$, respectively.
System \eqref{eq:lure_ss} arises as the feedback connection of a linear control system and nonlinear, static, but possibly time-varying, output feedback $u(t)=
f(t,y(t))$ as illustrated in Figure \ref{fig:lure_system}. It is shown in Section
\ref{sec:feedthrough} that system \eqref{eq:lure_ss} is sufficiently general to capture a number of scenarios common in control theory and engineering.
\begin{figure}[h!]
\centering
\begin{tikzpicture}
\coordinate (O) at (0,0);
\node[draw, thick, minimum width=0.5cm, minimum height=0.85cm, anchor=south west, text width = 3.5cm, align = center] (P) at (O) {\footnotesize\begin{tabular}{c}
$\dot x = Ax + Bu + B_{\rm e} v$  \\
$ y = Cx + Du + D_{\rm e} v$
\end{tabular}};
\node[draw, thick, minimum width=0.5cm, minimum height=0.85cm, text width=1cm, align=center] (C) at ($(P.270) - (0,1)$) {$f$};
\draw[thick, -latex] ($(P.170) - (1,0)$) -- (P.170) node[above, pos=0.5] {\footnotesize{$v$}};
\draw[thick] (P.0) -- ($(P.0)+(0.5,0)$);
\draw[thick,-latex] ($(P.0)+(0.5,0)$) |- (C.0)node [right, pos=0.25]{\footnotesize{$y$}};


\draw[thick] (C.180)  -|  ($(P.190)-(0.5,0)$)node [left, pos=0.8]{\footnotesize{$u$}};
\draw[thick,-latex] ($(P.190)-(0.5,0)$) -- (P.190);
\end{tikzpicture}
\caption{Forced Lur'e system}
\label{fig:lure_system}
\end{figure}
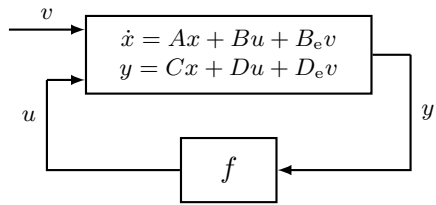

The matrix $D$ is called the feedthrough matrix, or simply, feedthrough.\footnote[2]{\,The matrix $D_{\rm e}$ also represents a feedthrough, an instantaneous impact of the input $v$ on the output $y$. However, it is the feedthrough $D$ which makes \eqref{eq:lure_ss_b} into an implicit equation for $y$, thereby potentially causing well-posedness issues. Therefore, in this paper,
the word ``feedthrough'' usually refers to $D$.}
Note that if $D=0$, then $y$ may be eliminated from \eqref{eq:lure_ss} leaving a system of nonlinear forced ordinary differential equations, the well-posedness of which is standard. However, if $D$ is non-zero, then equation \eqref{eq:lure_ss_b} is implicit in $y$ and this makes the analysis of well-posedness properties of \eqref{eq:lure_ss} more challenging.

Focussing on the case $D \neq 0$, we will discuss certain natural existence, uniqueness and continuation questions relating to system~\eqref{eq:lure_ss}:
\begin{itemize}
\item[(a)] Given~$v\in L^\infty_{\rm loc}(\R_+,\R^{m_{\rm e}})$, $t_0\geq 0$ and $x^0\in\R^n$, does there exist a pair $(x,y)$ satisfying \eqref{eq:lure_ss} such that
    $x(t_0)=x^0$?
\item[(b)] Under what conditions is uniqueness guaranteed?
\item[(c)] If $(v,x,y)$ satisfies \eqref{eq:lure_ss} and is maximally defined on $[t_0,\tau)$
with $t_0<\tau<\infty$, is it true that
\[ \limsup_{t\uparrow \tau}\|x(t)\|+\int_{t_0}^\tau\big(\|y(s)\|+\|f(s,y(s))\|\big){\rm d}s=\infty\;?\]
\item[(d)] Assuming that the nonlinearity $f$ satisfies $\|f(t,\xi)\|\leq a(t)+b\|\xi\|$ for all $t\geq 0$ and all $\xi$, where $a$ is locally integrable and
    $b$ is a positive constant: is it true that \eqref{eq:lure_ss} is forward complete in the sense that if $(v,x,y)$ satisfies \eqref{eq:lure_ss} and is maximally defined on $[t_0,\tau)$, then $\tau=\infty$?
\end{itemize}
Whilst Lur'e systems with feedthrough are considered throughout the literature (including
recent publications), see, for example, the research monograph \cite[Sections 3.13 and 3.14]{blme20}, the text books
\cite[Sections 5.7 and 5.8]{hc08}, \cite[Section 7.1]{k02} and \cite[Section 5.6]{v93}, and the papers \cite{bonp24,bovp22,gl24,hb93,tkp09,vdd19,zt02}, it seems that their well-posedness properties are rarely addressed in any detail. The exceptions which we are aware of include
\cite[Lemma 3.94]{blme20}, \cite[Assumption 1 and Appendix B]{vdd19} and \cite[Claim 1 and Appendix B]{zt02} which deal with certain well-posedness issues for some special classes of time-independent nonlinearities. The paper \cite{gl24} by
the authors contains a well-posedness result for the case of time-independent, continuously differentiable $f$, see \cite[Proposition 3.2]{gl24}. In the sections below, we will provide
a systematic analysis of the well-posedness issues (a)-(d) listed above in the context
of the general class of Lur'e systems \eqref{eq:lure_ss} with time-dependent nonlinearity,
including the
development of sufficient conditions guaranteeing well-posedness in the sense of (a), (b), (c)
and/or (d). In particular, the well-posedness results in \cite{blme20,gl24,vdd19,zt02} are consequences of the general theory developed in Section \ref{sec:wp} below, as is the well-known
result for the linear case $f(t,\xi) = K(t)\xi$.

In Subsection \ref{subsec:invertible},
we identify conditions on $D$ and $f$ ensuring that the map $F_t: \xi\mapsto \xi-Df(t,\xi)$ is
invertible for each $t\geq 0$ and the function $(t,\xi)\mapsto F_t^{-1}(\xi)$ is a Caratheodory
function and bounded on bounded sets, in which case the output $y$ can be eliminated from the
differential equation \eqref{eq:lure_ss_a}; the resulting system is then a ``standard'' nonlinear ordinary differential equation. The results in Subsection \ref{subsec:invertible} relate to existence and continuation of solutions
to the initial-value problem, forward completeness and uniqueness of solutions.  One technical challenge throughout Section \ref{sec:wp} is the accommodation of the time dependence of the nonlinearity $f$: this
is accomplished by the assumption that the radial unboundedness of $F_t$ (which is necessary
for the invertibility of $F_t$) is locally uniform in $t$. Apart from the theory of ordinary differential
equations, the main technical ingredients are global inverse function theorems (which, we note, play also an important role in other areas of systems and circuits, see, for instance, \cite{bl07,s80,s02,wd72}). By working with Clarke's set-valued generalized derivative \cite{c90,clsw98}, we are able to avoid continuous differentiability assumptions, and only assume that $f(t,\xi)$ satisfies a mild local Lipschitz condition with respect to $\xi$, thereby
providing a sufficiently general framework which allows for common non-smooth nonlinearities such as deadzone and saturation.

In Subsection \ref{subsec:not_invertible}, we consider the case in which $F_t$ is not necessarily invertible
for all $t\geq 0$. We do this, by recasting \eqref{eq:lure_ss_a} as a differential inclusion through a set-valued ``inverse'' $F_t^{-1}(\xi)$, where this symbol now denotes the fibre
of $\xi$ under $F_t$.  The results in Subsection \ref{subsec:not_invertible} provide sufficient conditions for the existence of solutions and forward completeness of \eqref{eq:lure_ss}. The main technical tools are results from non-smooth analysis such as existence of solutions to
differential inclusions and certain selection theorems (including Filippov's selection theorem),
see \cite{af09,clsw98,d92,v00,v04}.

The remainder of this work is organised as follows. Section \ref{sec:preliminaries} collects relevant preliminary material on global homeomorphisms, Clarke's
set-valued generalized derivative and set-valued functions. Section \ref{sec:feedthrough} contains
a detailed discussion of the class of Lur'e systems under consideration, introduces a number
of concepts associated with this class of systems, including trajectories, behaviours,
the blow-up property and forward completeness. The  main results of this paper can be found in Section \ref{sec:wp}. Section \ref{sec:radial_unboundedness} provides sufficient conditions for the locally uniform radial unboundeness property of the family of maps $F_t$. Examples illustrating the theory are included throughout the presentation. Several technical results are relegated to the Appendix.
\section{Preliminaries}\label{sec:preliminaries}
Most of the mathematical notation and terminology we use is standard, with only a few items mentioned explicitly.

In the following, we shall collect a number of preliminaries, including certain global inversion results for functions
$\R^n\to\R^n$ which will play a key role in the analysis of Lur'e systems with feedthrough
(as they do in other areas of systems and circuits, see, for example \cite{bl07,s80,s02,wd72}).
In this context, we recall Brouwer's invariance of domain theorem.
\begin{theorem}\label{thm:invariance_of_domain}
Let $U\subset\R^n$ be open and $g:U\to\R^n$ be continuous. If $g$ is injective, then $V:=g(U)$ is
open and $g$ is a homeomorphism between $U$ and $V$.
\end{theorem}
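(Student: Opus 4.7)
The key point is to show that $V = g(U)$ is open; once this is established, $g$ automatically becomes a homeomorphism. Indeed, for any open subset $W\subseteq U$ the restriction $g|_W$ is still continuous and injective, so the openness claim applied to $W$ yields that $g(W)$ is open in $\R^n$. Hence $g$ is an open map, and its inverse on the open set $V$ is automatically continuous. So the real content of the theorem is the openness of $V$.

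To show openness of $V$, I would fix $x_0\in U$ and choose $r>0$ small enough that $\overline{B(x_0,r)}\subset U$, and set $S:=\partial B(x_0,r)$. By continuity and compactness, $g(\overline{B(x_0,r)})$ and $g(S)$ are compact; by injectivity, $g(x_0)\notin g(S)$, so $\delta:=\mathrm{dist}(g(x_0), g(S))>0$. The goal is then to prove that $B(g(x_0),\delta)\subset g(B(x_0,r))\subset V$, which forces $g(x_0)$ to be an interior point of $V$.

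For the remaining step I would invoke the Brouwer topological degree on $B(x_0,r)$. For any $y$ with $\|y-g(x_0)\|<\delta$, the straight-line segment from $g(x_0)$ to $y$ stays within distance less than $\delta$ of $g(x_0)$ and therefore avoids $g(S)$, so the map $H(s,x):=g(x)-((1-s)g(x_0)+sy)$ is a homotopy that never vanishes on $S$, and $\deg(g,B(x_0,r),(1-s)g(x_0)+sy)$ is constant in $s\in[0,1]$. Since $g$ restricted to the compact set $\overline{B(x_0,r)}$ is a continuous injection, a standard computation---either via the map induced on local homology $H_n(\R^n,\R^n\setminus\{x_0\})\to H_n(\R^n,\R^n\setminus\{g(x_0)\})$, both groups being isomorphic to $\Z$, or by approximating $g$ uniformly on $\overline{B(x_0,r)}$ by a smooth embedding and reading off the sign of the Jacobian determinant---yields $|\deg(g,B(x_0,r),g(x_0))|=1$. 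Non-vanishing of the degree forces $g^{-1}(y)\cap B(x_0,r)\neq\emptyset$ for every such $y$, so $B(g(x_0),\delta)\subset g(B(x_0,r))\subset V$, as required.

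The hard part is the innocuous-looking claim that the local degree of an injective continuous self-map of $\R^n$ equals $\pm 1$: this is the piece of algebraic topology that genuinely depends on the dimension of $\R^n$ and cannot be circumvented without some Brouwer-type fixed-point or simplicial/homological input. An equivalent route is to argue by contradiction: if $g(x_0)$ were not interior to $g(\overline{B(x_0,r)})$, one could take a point $y_0\notin g(\overline{B(x_0,r)})$ arbitrarily close to $g(x_0)$ and compose $g$ with radial projection away from $y_0$ to construct a continuous retraction of $\overline{B(x_0,r)}$ onto $S$, violating Brouwer's no-retraction theorem. Either way the proof ultimately rests on a non-trivial topological input which is the essential content of invariance of domain.
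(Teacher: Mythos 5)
The paper itself does not prove \Cref{thm:invariance_of_domain}; it simply quotes it as Brouwer's invariance of domain theorem and points the reader to \cite[Theorem 6.10.7]{d18} for a homological proof and to \cite{k98} for a proof avoiding singular homology. So there is no proof in the paper to compare against line by line.

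Your degree-theoretic sketch is essentially the standard algebraic-topology route that the first of those references follows. The reduction to showing $V$ open, the choice of $\delta=\mathrm{dist}(g(x_0),g(S))>0$, the homotopy-invariance argument showing $\deg(g,B(x_0,r),\cdot)$ is constant on $B(g(x_0),\delta)$, and the use of non-vanishing degree to get surjectivity onto that ball are all correct, and you correctly flag that the genuinely non-trivial input is $|\deg(g,B(x_0,r),g(x_0))|=1$ for a continuous injection, which is where the dimension-dependent topology (local homology $H_n(\R^n,\R^n\setminus\{\cdot\})\cong\Z$ and the fact that a homeomorphism onto its image induces an isomorphism) actually enters. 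This is the same core as the homological proof the paper cites. Two cautionary remarks. First, the alternative you mention of approximating $g$ on $\overline{B(x_0,r)}$ by a \emph{smooth embedding} and reading off the Jacobian sign is not as innocuous as it sounds: uniformly approximating a continuous injection by a smooth one need not preserve injectivity, and establishing that one can do so without changing the degree is roughly as hard as the statement itself. Second, and more importantly, the closing ``retraction'' argument is not correct as stated: composing $g$ with radial projection away from a nearby exterior point $y_0$ gives a map from $\overline{B(x_0,r)}$ to a sphere centred at $y_0$, not a retraction of $\overline{B(x_0,r)}$ onto its own boundary $S$, and without a further (non-obvious) construction this does not contradict Brouwer's no-retraction theorem. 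The elementary proof in \cite{k98} that the paper alludes to does rely only on Brouwer's fixed point theorem, but it proceeds via a rather different combinatorial/cube-lemma argument, not via the radial-projection idea you sketch. If you want an argument at that level of rigour you should either carry out the degree computation in full or follow Kulpa's construction; the last paragraph as written would not survive scrutiny.
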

Proofs of the above result can be found in books on algebraic topology, see, for example
\cite[Theorem 6.10.7]{d18}. An alternative proof, which does not rely on singular homology theory, and avoids methods from algebraic topology beyond Brouwer's fixed point theorem, can be found in
\cite{k98}.\footnote[2]{\,We became aware of this reference through T.\ Tao's blog post {\it Brouwer's fixed point and invariance of domain theorems, and Hilbert's fifth problem},
https://terrytao.wordpress.com/2011/06/13/brouwers-fixed-point-and-invariance-of-domain-theorems-and-hilberts-fifth-problem/}

A function $g:\R^n\to\R^m$ is said to be {\it locally injective} if, for every $z\in\R^n$, there
exists a neighbourhood $U$ of $z$ such that $g|_U$ is injective. Similarly, we say that a continuous function $g:\R^n\to\R^n$ is a {\it local homeomorphism} if, for every $z\in\R^n$, there exist an open neighbourhood $U$ of $z$ such that $V:=g(U)$ is open and $g|_U$ is a homeomorphism between $U$ and $V$. It follows from \Cref{thm:invariance_of_domain} that a locally injective continuous function
$g:\R^n\to\R^n$ is a local homeomorphism. A continuous function $g:\R^n\to\R^n$ is said to be a {\it Lipschitz homeomorphism} if it is a homeomorphism and $g$ and $g^{-1}$ are locally Lipschitz.
Furthermore, $g:\R^n\to\R^m$ is said to be {\it radially
unbounded} if $\|g(z)\|\to\infty$, whenever $\|z\|\to\infty$. A continuous function $g$ is radially
unbounded if, and only if, for every compact set $K\subset\R^m$, the preimage $g^{-1}(K)$ is compact.
The  Fr{\'e}chet derivative of a function $g:\R^n\to\R^m$ at the point $z$ is denoted by $({\rm d}g)(z)$. Let $N_g$ denote the set of all points at which $g$ is not Fr{\'e}chet differentiable. If $g:\R^n\to\R^m$ is locally Lipschitz, then, by Rademacher's theorem (see, for example, \cite[Corollary 4.19, Chapter 3]{clsw98}), $({\rm d}g)(z)$ exists for almost every $z$, or, equivalently, $N_g$ has zero measure. For a locally Lipschitz function $g:\R^n\to\R^m$, Clarke's set-valued generalized derivative $({\rm d}^{\rm c}g)(z)$ of $g$ at $z$ is defined by
\begin{equation}\label{eq:clarke_derivative}
({\rm d}^{\rm c}g)(z)={\rm co}\{M\in\R^{m\times n}:\mbox{$\exists\,z_k\in\R^n\backslash N_g$ s.t.
$z_k\to z$ and $({\rm d}g)(z_k)\to M$ as $k\to\infty$}\},
\end{equation}
where ${\rm co}\,S$ denotes the convex hull of the set $S$, see \cite{c76,c90,clsw98}.
Obviously, $({\rm d}g)(z)\in ({\rm d}^{\rm c}g)(z)$ for $z\in\R^n\backslash N_g$.
If $g$ is continuously differentiable, then $({\rm d}^{\rm c}g)(z)=\{({\rm d}g)(z)\}$ for all $z\in\R^n$.

The following lemma will be used freely. Although the lemma is certainly known, we were not able to find a reference, and hence, we will give a proof.
\begin{lemma}\label{lem:Lg}
For locally Lipschitz $g:\R^n\to\R^m$ and $L\in\R^{q\times m}$, we have that
$({\rm d}^{\rm c}Lg)(z)=L({\rm d}^{\rm c}g)(z)$ for all $z\in\R^n$.
\end{lemma}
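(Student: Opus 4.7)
The plan is to work directly from the defining formula \eqref{eq:clarke_derivative}, exploiting the linearity of $L$. The starting observation is that if $g$ is Fr\'echet differentiable at $w$, then so is $Lg$, with $({\rm d}(Lg))(w)=L\,({\rm d}g)(w)$; consequently $N_{Lg}\subseteq N_g$, both sets have Lebesgue measure zero, and on $\R^n\setminus N_g$ the two classical Jacobians are linked by the identity ${\rm d}(Lg)=L\,{\rm d}g$.

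The inclusion $L\,({\rm d}^{\rm c}g)(z)\subseteq ({\rm d}^{\rm c}Lg)(z)$ is the easy direction. Any generator $M_i=\lim_k ({\rm d}g)(z_k^{(i)})$ of $({\rm d}^{\rm c}g)(z)$ satisfies $z_k^{(i)}\to z$ and $z_k^{(i)}\notin N_g\supseteq N_{Lg}$, so $LM_i=\lim_k ({\rm d}(Lg))(z_k^{(i)})$ is among the generators of $({\rm d}^{\rm c}Lg)(z)$; since $L$ maps convex hulls to convex hulls, taking convex combinations yields the inclusion.

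The reverse inclusion is the substantive part. I would invoke the standard measure-zero invariance of Clarke's definition, cf.\ \cite{c90,clsw98}: the set on the right-hand side of \eqref{eq:clarke_derivative} is unchanged if $N_g$ is replaced by $N_g\cup\Omega$ for any Lebesgue null set $\Omega$. Applying this with $\Omega=N_{Lg}$ (for $g$) and symmetrically with $\Omega=N_g$ (for $Lg$), both $({\rm d}^{\rm c}g)(z)$ and $({\rm d}^{\rm c}Lg)(z)$ can be written as convex hulls of subsequential limits taken over the common set $\R^n\setminus(N_g\cup N_{Lg})$, on which ${\rm d}(Lg)=L\,{\rm d}g$. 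A Bolzano--Weierstrass extraction --- using that ${\rm d}g$ is bounded near $z$ because $g$ is locally Lipschitz --- then identifies the set of limits of $({\rm d}(Lg))(z_k)=L\,({\rm d}g)(z_k)$ as exactly $L$ applied to the set of limits of $({\rm d}g)(z_k)$. Using ${\rm co}(LA)=L\,{\rm co}(A)$ for the linear map $L$ completes the argument.

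The main obstacle is precisely the measure-zero invariance step: without it one must separately handle points $z_k\in N_g\setminus N_{Lg}$ at which $Lg$ is differentiable but $g$ is not, and approximating such $z_k$ by nearby points in $\R^n\setminus N_g$ while preserving convergence of $({\rm d}(Lg))(z_k)$ requires Lebesgue-density arguments that go beyond a one-line appeal. An alternative route altogether would be to cite Clarke's chain rule for the composition of a locally Lipschitz function with a $C^1$ map; applied to the linear map $y\mapsto Ly$ it specialises immediately to the claimed identity, at the cost of importing a more general result whose proof relies on the same invariance.
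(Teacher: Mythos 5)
Your proposal is correct and follows essentially the same route as the paper's proof: both reduce to comparing the pre-convexification limit sets over a common null-set-avoiding index set (invoking the standard measure-zero invariance of Clarke's construction), establish the easy inclusion by taking $L$ of a convergent sequence of Jacobians, and obtain the reverse inclusion by a Bolzano--Weierstrass extraction using the boundedness of $({\rm d}g)(z_k)$ near $z$ (a consequence of local Lipschitzness), finishing with ${\rm co}(LA)=L\,{\rm co}(A)$. The only cosmetic difference is that you observe $N_{Lg}\subseteq N_g$ and so work over $\R^n\setminus N_g$ directly, whereas the paper takes the (redundant but harmless) union $N_g\cup N_{Lg}$; your remark about the chain-rule shortcut is also a fair alternative, with the caveat you correctly identify.
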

\begin{proof} Define a null set $N\subset\R^n$ by $N:=N_g\cup N_{Lg}$. For $z\in\R^n$ set
\[
S_g(z):=\{M\in\R^{m\times n}:\mbox{$\exists\,z_k\in\R^n\backslash N$ s.t.
$z_k\to z$ and $({\rm d}g)(z_k)\to M$ as $k\to\infty$}\}
\]
and
\[
S_{Lg}(z):=\{M\in\R^{q\times n}:\mbox{$\exists\,z_k\in\R^n\backslash N$ s.t.
$z_k\to z$ and $({\rm d}Lg)(z_k)\to M$ as $k\to\infty$}\}.
\]
It is well known that in the set on the right-hand side of \eqref{eq:clarke_derivative},
the defining equation of Clarke's set-valued generalized derivative,
an arbitrary null set $E\subset\R^n$ can be avoided, in the sense that only sequences
$(z_k)$ satisfying $z_k\in\R^n\backslash (N_g\cup E)$ are considered, without changing
the set-valued derivative, see \cite[Theorem 8.1, Chapter 2 and p.\ 133]{clsw98}. Consequently
\[
({\rm d}^{\rm c}g)(z)={\rm co}\,S_g(z)\quad\mbox{and}\quad({\rm d}^{\rm c}Lg)(z)={\rm co}\,S_{Lg}(z).
\]
Thus, it is sufficient to prove that $S_{Lg}(z)=LS_g(z)$ for all $z\in\R^n$. It is obvious that
$LS_g(z)\subset S_{Lg}(z)$. To prove the reverse inclusion, let $M\in S_{Lg}(z)$. Then there exists
a sequence $(z_k)$ in $\R^n\backslash N$ such that $\lim_{k\to\infty}z_k=z$ and
\[
\lim_{k\to\infty}L({\rm d}g)(z_k)=\lim_{k\to\infty}({\rm d}Lg)(z_k)=M.
\]
It follows from the Lipschitz property that the sequence $\big(({\rm d}g)(z_k)\big)_k$ is bounded. Hence, there exists a subsequence $\big(({\rm d}g)(z_{k_j})\big)_j$ and $G\in\R^{m\times n}$ such that
$({\rm d}g)(z_{k_j})\to G$ as $j\to\infty$, and so $G\in S_g(z)$. As $L({\rm d}g)(z_{k_j})\to M$
as $j\to\infty$, we conclude that $M=LG\in LS_g(z)$, and hence, $S_{Lg}(z)\subset LS_g(z)$, completing the proof.
\end{proof}
The open ball in $\R^n$ of radius $\rho>0$ and centred at $z$ is denoted by
$\B(z,\rho)$.
\begin{theorem}\label{thm:local_global}
The following statements hold.
\begin{enumerate}[label = {\bf (\arabic*)}, ref={\rm (\arabic*)}, leftmargin = 0ex, itemindent = 4.6ex]
\item\label{ls:local_global_s_1}
A continuous function $g:\R^n\to\R^n$ is a homeomorphism if, and only if, $g$ is locally injective and radially unbounded.
\item\label{ls:local_global_s_2}
A locally Lipschitz function $g:\R^n\to\R^n$ is a Lipschitz homeomorphism if, and only if, $g$ is radially unbounded, and, for every $z_0\in\R^n$, there exist $\delta,\varepsilon>0$ such that
\begin{equation}\label{eq:lower_lip}
\|g(z_1)-g(z_2)\|\geq\varepsilon\|z_1-z_2\|\quad\forall\,z_1,z_2\in\B(z_0,\delta).
\end{equation}
\item\label{ls:local_global_s_3}
If $g:\R^n\to\R^n$ is locally Lipschitz and radially unbounded and every matrix in $\bigcup_{z\in\R^n}({\rm d}^{\rm c}g)(z)$ is invertible, then $g$ is a Lipschitz homeomorphism.
\end{enumerate}
\end{theorem}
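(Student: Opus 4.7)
My plan is to establish the three parts in the order stated, with each part feeding directly into the next.

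\textbf{Part (1).} Necessity is routine: a homeomorphism is trivially locally injective, and radial unboundedness of $g$ follows from continuity of $g^{-1}$ (if $\|z_k\|\to\infty$ while $(g(z_k))$ had a bounded subsequence, applying $g^{-1}$ would yield a bounded subsequence of $(z_k)$, a contradiction). For sufficiency, Theorem~\ref{thm:invariance_of_domain} combined with local injectivity makes $g$ a local homeomorphism, hence an open map. Radial unboundedness is equivalent to properness of $g$ (as noted in the preliminaries), and properness implies that $g(\R^n)$ is closed: if $g(z_k)\to w$, then boundedness of $(g(z_k))$ forces $(z_k)$ to have a convergent subsequence whose limit maps to $w$. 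Thus $g(\R^n)$ is open, closed, and nonempty, hence equals $\R^n$ by connectedness. The remaining step, global injectivity, is the main technical hurdle; the cleanest route I see is to invoke the classical fact that a surjective proper local homeomorphism between connected, locally path-connected Hausdorff spaces is a covering map, and then to exploit simple connectedness of the target $\R^n$ to force the covering to be single-sheeted.

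\textbf{Part (2).} For necessity, radial unboundedness comes from (1), and the local lower-Lipschitz bound \eqref{eq:lower_lip} follows from local Lipschitz-ness of $g^{-1}$: given $z_0\in\R^n$, choose a neighbourhood $V$ of $g(z_0)$ on which $g^{-1}$ is $L$-Lipschitz and, by continuity of $g$, pick $\delta>0$ with $g(\B(z_0,\delta))\subset V$; then $\|z_1-z_2\|=\|g^{-1}(g(z_1))-g^{-1}(g(z_2))\|\le L\|g(z_1)-g(z_2)\|$ for all $z_1,z_2\in\B(z_0,\delta)$. For sufficiency, \eqref{eq:lower_lip} immediately yields local injectivity, so part (1) gives that $g$ is a homeomorphism; to show that $g^{-1}$ is locally Lipschitz at $w_0=g(z_0)$, I will use continuity of $g^{-1}$ to pull a small ball $\B(w_0,\rho)$ into $\B(z_0,\delta)$ and then invert \eqref{eq:lower_lip}.

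\textbf{Part (3).} The strategy is to reduce the statement to part (2). The key input is Clarke's inverse function theorem (see \cite[Chapter 7]{c90} or \cite[Chapter 7]{clsw98}): if $g$ is locally Lipschitz near $z_0$ and every matrix in $({\rm d}^{\rm c}g)(z_0)$ is nonsingular, then there exist $\delta,\varepsilon>0$ such that \eqref{eq:lower_lip} holds on $\B(z_0,\delta)$. Combining this pointwise fact, applied at every $z_0\in\R^n$, with the assumed radial unboundedness places us in the sufficiency branch of (2), delivering the conclusion. The principal difficulty of the whole theorem is really the injectivity step in (1); the remainder of the argument amounts to careful bookkeeping of neighbourhoods together with a correct invocation of Clarke's inverse function theorem in the form producing a quantitative lower Lipschitz estimate, rather than only a set-theoretic local inverse.
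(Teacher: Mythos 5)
Your proof is correct and follows essentially the same approach as the paper. For part (1), where you sketch the covering-space argument for the global homeomorphism step, the paper simply cites \cite[Theorem 1.8, Chapter 3]{ap95}, which is precisely the global-from-local homeomorphism theorem you outline; and in part (3) you reduce to part (2) by extracting the local lower Lipschitz estimate \eqref{eq:lower_lip} from Clarke's theorem, whereas the paper extracts local injectivity together with local Lipschitzness of local inverses and reduces to part (1) — these two reductions are interchangeable and of equal depth.
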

\begin{proof} \ref{ls:local_global_s_1} If $g$ is a homeomorphism, then it is injective and, {\it a fortiori}, locally injective. As the inverse function $g^{-1}$ is continuous, we have that $g^{-1}(K)$ is compact for every compact $K\subset\R^n$, and thus $g$ is radially unbounded.

Conversely, assume that $g$ is locally injective and radially unbounded. By \Cref{thm:invariance_of_domain}, $g$ is a local homeomorphism. It now follows
from \cite[Theorem 1.8, Chapter 3]{ap95} that $g$ is a homeomorphism.

\ref{ls:local_global_s_2} If $g$ is a Lipschitz homeomorphism, then, by statement \ref{ls:local_global_s_1}, $g$ is radially
unbounded. Furthermore, for $z_0\in\R^n$, there exist $\eta>0$ and $\lambda>0$ such that
$\|g^{-1}(\xi_1)-g^{-1}(\xi_2)\|\leq\lambda\|\xi_1-\xi_2\|$ for all $\xi_1,\xi_2\in\B(g(z_0),\eta)$.
Choose $\delta>0$ sufficiently small so that $g(z)\in\B(g(z_0),\eta)$ for all $z\in\B(z_0,\delta)$.
It follows that $\|g(z_1)-g(z_2)\|\geq(1/\lambda)\|z_1-z_2\|$ for all $z_1,z_2\in\B(z_0,\delta)$, whence \eqref{eq:lower_lip} holds with $\varepsilon=1/\lambda$.

Conversely, assume that $g$ is radially unbounded, and, for every $z_0\in\R^n$, there exist $\delta,\varepsilon>0$ such that \eqref{eq:lower_lip} is satisfied. The latter condition
implies that $g$ is locally injective. Hence, by statement \ref{ls:local_global_s_1}, $g$ is a homeomorphism. The local Lipschitz property of $g^{-1}$ follows from \eqref{eq:lower_lip}.

\ref{ls:local_global_s_3} It follows from Clarke's inversion theorem
for locally Lipschitz functions \cite{c76} (see also \cite[Theorem 7.1.1]{c90} or
\cite[Theorem 3.12, Chapter 3]{clsw98}) that $g$ is locally injective and every local inverse of $g$ is locally Lipschitz. The claim now follows from statement \ref{ls:local_global_s_1}.
\end{proof}
%
%
We continue with the statement and proof of a technical lemma which will be used in Sections \ref{sec:wp} and \ref{sec:radial_unboundedness}. The line segment determined by two points $z_1$ and $z_2$ in $\R^n$ is denoted by $[z_1,z_2]$, that is, $[z_1,z_2]:=\{z_1+s(z_2-z_1):0\leq s\leq 1\}$.
\begin{lemma}\label{lem:derivative_bounds}
Let $g:\R^n\to\R^m$ be locally Lipschitz, $U\subset\R^n$ be open, $U\not=\emptyset$, and
$N\subset\R^n$ be a null set such that $N_g\subset N$. The following
statements hold.
\begin{enumerate}[label = {\bf (\arabic*)}, ref={\rm (\arabic*)}, leftmargin = 0ex, itemindent = 4.6ex]
\item\label{ls:derivative_bounds_s_1}
If there exists $b>0$ such that $\|({\rm d}g)(z)\|\leq b$ for all $z\in U\backslash N$, then
\begin{equation}\label{eq:g_lipschitz}
\|g(z_1)-g(z_2)\|\leq b\|z_1-z_2\|\quad\mbox{for $z_1,z_2\in\R^n$ s.t. $[z_1,z_2]\subset U$}.
\end{equation}
\item\label{ls:derivative_bounds_s_2}
If $n=m$ and there exists $c\in\R$ such that $\langle ({\rm d}g)(z)\zeta,\zeta\rangle\leq c\|\zeta\|^2$ for all $\zeta\in\R^n$ and all $z\in U\backslash N$, then
\begin{equation}\label{eq:g_mono}
\langle g(z_1)-g(z_2),z_1-z_2\rangle\leq c\|z_1-z_2\|^2\quad\mbox{for $z_1,z_2\in\R^n$ s.t. $[z_1,z_2]\subset U$}.
\end{equation}
\end{enumerate}
\end{lemma}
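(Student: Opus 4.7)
The plan is to reduce both statements to the case of smooth $g$ by mollification, because the principal obstacle is that the null set $N$ (which contains the points where $g$ fails to be Fr\'echet differentiable) could intersect the segment $[z_1,z_2]$ in a set of positive one-dimensional measure; consequently one cannot directly integrate and write $g(z_1)-g(z_2)=\int_0^1 ({\rm d}g)(z_2+s(z_1-z_2))(z_1-z_2)\,{\rm d}s$ and substitute the hypothesised pointwise bounds. Convolution with a smooth approximate identity fixes this by replacing $g$ with functions whose derivatives inherit the bounds \emph{everywhere} on a neighbourhood of $[z_1,z_2]$.

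Fix $z_1,z_2\in\R^n$ with $[z_1,z_2]\subset U$. By compactness of the segment and openness of $U$, there exists $\eps_0>0$ such that the open set $U':=\{z\in\R^n:\overline{\B(z,\eps_0)}\subset U\}$ contains $[z_1,z_2]$. Let $(\rho_\eps)_{\eps>0}$ be a standard family of nonnegative mollifiers with $\rho_\eps$ supported in $\B(0,\eps)$ and $\int\rho_\eps=1$, and define $g_\eps:=g*\rho_\eps$ for $0<\eps<\eps_0$. Then $g_\eps$ is smooth on $U'$, and since $N$ is a null set, differentiation under the integral sign gives
\[
({\rm d}g_\eps)(z)=\int_{\R^n}({\rm d}g)(z-\eta)\,\rho_\eps(\eta)\,{\rm d}\eta\quad\mbox{for all $z\in U'$},
\]
with integrand defined for a.e.\ $\eta$. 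Continuity of $g$ ensures $g_\eps\to g$ uniformly on the compact set $[z_1,z_2]$ as $\eps\downarrow 0$.

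For statement \ref{ls:derivative_bounds_s_1}, integrating $\|({\rm d}g)(z)\|\leq b$ against the probability density $\rho_\eps$ immediately yields $\|({\rm d}g_\eps)(z)\|\leq b$ for every $z\in U'$. The classical mean-value inequality applied to the smooth function $g_\eps$ along the segment then gives $\|g_\eps(z_1)-g_\eps(z_2)\|\leq b\|z_1-z_2\|$, and passing to the limit $\eps\downarrow 0$ proves \eqref{eq:g_lipschitz}. For statement \ref{ls:derivative_bounds_s_2}, the same averaging shows that for each fixed $\zeta\in\R^n$, $\ipd{({\rm d}g_\eps)(z)\zeta}{\zeta}\leq c\|\zeta\|^2$ for all $z\in U'$. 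Specialising to $\zeta=z_1-z_2$ and integrating along the segment,
\[
\ipd{g_\eps(z_1)-g_\eps(z_2)}{z_1-z_2}=\int_0^1\bigl\langle ({\rm d}g_\eps)(z_2+s(z_1-z_2))(z_1-z_2),\,z_1-z_2\bigr\rangle\,{\rm d}s\leq c\|z_1-z_2\|^2,
\]
and letting $\eps\downarrow 0$ yields \eqref{eq:g_mono}. Both limits use only uniform convergence of $g_\eps$ on $[z_1,z_2]$, so no further regularity is required.
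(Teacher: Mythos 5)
Your mollification argument is correct, and it reaches the same conclusion by a genuinely different route from the paper's proof. The paper (following Clarke) argues directly with the non-smooth function: it uses Fubini's theorem to show that for almost every translate $[y,y+\xi]$ of the segment $[z_1,z_2]$ (with $y$ ranging over the hyperplane through $z_1$ perpendicular to $\xi:=z_2-z_1$), the set $N$ meets $[y,y+\xi]$ in a $1$-dimensional null set. One can then integrate $({\rm d}g)(l_y(s))\xi$ over $s\in[0,1]$ for those good $y$, obtain the Lipschitz (resp.\ monotonicity) bound for translated segments, and recover the bound for $[z_1,z_2]$ itself by continuity of $g$, since $z_1\in\overline Y$. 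Your approach instead replaces $g$ by the smooth mollifications $g_\eps=g*\rho_\eps$: the derivative bound/monotonicity estimate for $({\rm d}g)$ transfers to $({\rm d}g_\eps)=({\rm d}g)*\rho_\eps$ by averaging against a probability density, the classical mean-value inequality applies to $g_\eps$ along $[z_1,z_2]$ without any measure-theoretic care, and the result follows by letting $\eps\downarrow0$ using uniform convergence on the compact segment. The trade-off is essentially aesthetic: the Fubini argument is self-contained and entirely elementary beyond Rademacher's theorem, while your mollification argument is shorter and more conceptually transparent at the cost of invoking the standard fact that the weak (equivalently, a.e.\ pointwise) derivative of a Lipschitz function commutes with convolution, i.e.\ $\nabla(g*\rho_\eps)=(\nabla g)*\rho_\eps$; that fact, rather than naive differentiation under the integral sign, is what justifies your displayed identity, since for fixed $z$ the integrand $({\rm d}g)(z-\eta)$ is undefined for $\eta$ in a null set, so the Leibniz rule does not apply directly.
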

\begin{proof}
In the first part of the proof we shall use an argument due to Clarke \cite{c76} (see also \cite[Proof of Lemma 2, p. 254]{c90} or \cite[Proof of Proposition 3.11, Chapter 3]{clsw98}).
Let $z_1,z_2\in U$ be such that $[z_1,z_2]\subset U$ and set $\xi:=z_2-z_1$. Let $H$ be the hyperplane in $\R^n$ perpendicular to $\xi$ and passing through $z_1$. Obviously, the set $U\cap N$ is of measure $0$. It follows from Fubini's theorem that, for almost every $y$ in $H\cap U$ (in the sense of $(n-1)$-dimensional Lebesgue
measure), the parametrization $l_y(s):=y+s\xi$, $0\leq s\leq 1$, of the line segment $[y,y+\xi]$ has the property that $E_y:=l_y^{-1}(N)\subset[0,1]$ is a null set, or, equivalently, the
intersection $[y,y+\xi]\cap N$ is a $1$-dimensional null set.
Let $Y\subset H\cap U$ be the set of all $y\in H\cap U$ such that $E_y$ is a null set and
$l_y(s)\in U$ for all $s\in [0,1]$. We note that $g$ is differentiable at $l_y(s)$ for all
$y\in Y$ and all $s\in[0,1]\backslash E_y$.
For $y\in Y$, the function $h_y:[0,1]\to\R^m$ defined by $h_y(s):=g(l_y(s))$
satisfies a Lipschitz condition on $[0,1]$, and thus, is absolutely continuous. Furthermore,
\begin{equation}\label{eq:h_properties}
h_y(1)-h_y(0)=\int_0^1h_y'(s){\rm d}s\qquad\mbox{and}\qquad h_y'(s)=({\rm d}g)(l_y(s))\xi\quad\forall\,y\in Y,\,\,\forall\,s\in[0,1]\backslash E_y.
\end{equation}
\ref{ls:derivative_bounds_s_1} It follows from \eqref{eq:h_properties} that
\[
\|g(y)-g(y+\xi))\|\leq b\|\xi\|=b\|z_1-z_2\|\quad\forall\,y\in Y.
\]
As $z_1\in\overline{Y}$, it follows from the continuity of $g$ that \eqref{eq:g_lipschitz} holds.

\ref{ls:derivative_bounds_s_2} We obtain from \eqref{eq:h_properties} that
\[
\langle g(y)-g(y+\xi),z_1-z_2\rangle\leq c\|\xi\|^2=c\|z_1-z_2\|^2\quad\forall\,y\in Y.
\]
Inequality \eqref{eq:g_mono} now follows from the continuity of $g$ and the fact that  $z_1\in\overline{Y}$.
\end{proof}
A function $g:\R_+\times\R^m\to\R^n$ is said to be a {\it Caratheodory function} if, for every $t\in \R_+$, the function $z\mapsto g(t,z)$ is continuous, and, for every $z\in\R^m$, the function $t\mapsto g(t,z)$ is measurable. It is well known that, if $g:\R_+\times\R^m\to\R^n$  is a {\it Caratheodory function} and $w:\R_+\to\R^m$ is measurable, then the function $t\mapsto g(t,w(t))$ is measurable,
see, for example, \cite[Lemma 8.2.3]{af09}.

Next, we review a number of concepts related to set-valued functions. Details can be found, for example in \cite{af09,d92} and \cite[Chapter 2]{v00}. For a set-valued map $G$ from ${\rm dom}(G)\subset\R^m$ to $2^{\R^n}$, the power set of $\R^n$, we shall use the notation $G:{\rm dom}(G)\rightrightarrows\R^n$ and $\Delta(G):=\{\xi\in{\rm dom}(G):G(\xi)\not=\emptyset\}$. We say that $G$ is {\it upper semicontinuous at} $\xi\in\Delta(G)$ if, for every open set $U\subset\R^n$ such that $G(\xi)\subset U$, there exists an open neighbourhood $V\subset\R^m$ of $\xi$ such that $G(\zeta)\subset U$ for all $\zeta\in V$. The map $G$ is called {\it upper semicontinuous on $S\subset\Delta(G)$} if it is upper semicontinuous at every point in $S$. We simply say that $G$ is {\it upper semicontinuous} if $G$ is upper semicontinuous on $\Delta(G)$. It is well known
that $G$ is upper semicontinuous if, and only if, for every closed $Y\subset\R^n$, there exists
a closed $X\subset\R^m$ such that $G^{-1}(Y)=X\cap\Delta(G)$, where
\[
G^{-1}(Y):=\{\xi\in{\rm dom}(G): G(\xi)\cap Y\not=\emptyset\}=\{\xi\in\Delta(G): G(\xi)\cap Y\not=\emptyset\},
\]
the preimage of $Y$ under $G$. A set-valued map $G:{\rm dom}(G)\rightrightarrows\R^n$ is said to be {\it measurable} if, for every open set $U\subset\R^n$, the set $G^{-1}(U)$ is Lebesgue measurable. A {\it measurable selection} of a set-valued map $G:{\rm dom}(G)\rightrightarrows\R^n$ is a measurable function $g:{\rm dom}(G)\to\R^n$ such that $g(\xi)\in G(\xi)$ for all $\xi\in{\rm dom}(G)$. Under the assumption that $\Delta(G)={\rm dom}(G)$, if a measurable function $\tilde g:{\rm dom}(G)\to\R^n$ satisfies $\tilde g(\xi)\in G(\xi)$ for almost every $\xi\in{\rm dom}(G)$, then there exists a measurable selection $g$ of $G$ such that $g(\xi)=\tilde g(\xi)$ for almost every $\xi\in{\rm dom}\,G$. Therefore, the function $\tilde g$ will also be referred to as a measurable selection of $G$.

Finally, the following convention applies to mixed ``almost everywhere'' and ``all'' quantifications: for $g:\R\times\R^m\to\R^n$, $T\subset\R$ and $S\subset\R^m$, the statement
\[
\mbox{$g(t,z)$ has property $P$}\quad\mbox{for a.e. $t\in T$ and all $z\in S$},
\]
should be understood as follows: there exists a set $E\subset T$ of measure zero such that
\[
\mbox{$g(t,z)$ has property $P$}\quad\forall\,(t,z)\in(T\backslash E)\times S,
\]
that is the ``a.e.'' part of the statement holds uniformly with respect to $z\in S$.
\section{Lur'e systems with feedthrough: definitions and concepts}\label{sec:feedthrough}
For fixed~$m,m_{\rm e},n,p\in \N$, let
 \[
(A,B,B_{\rm e},C,D,D_{\rm e})\in \mR^{n\times n}\times\mR^{n\times m}\times
 \mR^{n \times m_{\rm e}} \times \mR^{p\times n}
 \times \mR^{p\times m}\times \mR^{p\times m_{\rm e}}.
 \]
With the sextuple $(A,B,B_{\rm e},C,D,D_{\rm e})$, we associate the
following controlled and observed linear state-space system
 \begin{equation}\label{eq:lsss}
  \dot x = Ax + Bu + B_{\rm e}v, \quad y= Cx+Du + D_{\rm e}v.
 \end{equation}
Frequently, we will refer to~\eqref{eq:lsss} as the linear system $S:=(A,B,B_{\rm e},C,D,D_{\rm e})$.

Application of nonlinear output feedback of the form $u(t)=f(t,y(t))$ yields the closed-loop system \eqref{eq:lure_ss}, which will be denoted by $S^f:=(A,B,B_{\rm e},C,D,D_{\rm e},f)$.
For the following, it will be convenient to define $f\circ y$ by
\[
(f\circ y)(t):=f(t,y(t)).
\]
If~$f$ does not depend on~$t$, then~$f\circ y$ is simply the usual composition of the functions~$f$ and~$y$. The function~$v$ is an external input to the nonlinear feedback system~\eqref{eq:lure_ss}. It will always be assumed that $f$ is a Carathordory function.

Lur'e systems of the form~\eqref{eq:lure_ss} capture a large number of scenarios of interest, four of which are considered below.

{\it Scenario 1.} The system
\[
\dot x = Ax + B(f\circ y) + v_1, \quad y= Cx+D(f\circ y) + v_2
\]
is of the form~\eqref{eq:lure_ss} with~$m_{\rm e}=n+p$,~$B_{\rm e}=(I,0)$,~$D_{\rm e}=(0,I)$
and $v=(v_1^\top,v_2^\top)^\top$.

{\it Scenario 2.} Consider the following feedback scheme subject to output disturbances
\[
\dot x = Ax + Bu, \quad z= Cx+Du,\quad u=f\circ (z+d),
\]
where~$z$ is the undisturbed output and~$d$ is the output disturbance signal. Setting
$y:=z+d$, we have that
\[
\dot x = Ax + B(f\circ y), \quad y= Cx+D(f\circ y)+d,
\]
which is of the form~\eqref{eq:lure_ss} with~$m_{\rm e}=p$,~$B_{\rm e}=0$,~$D_{\rm e}=I$
and~$v=d$. Note that any boundedness properties of~$y$ can be used to infer boundedness properties of~$z$, provided suitable bounds on~$d$ are known.

{\it Scenario 3.} Here we consider the case of different nonlinearities in the state and output equations, namely
\[
\dot x = Ax + \tilde B(f_1\circ y)+B_{\rm e}v, \quad y= Cx+\tilde D(f_2\circ y)+D_{\rm e}v,
\]
where $\tilde B\in\R^{n\times m_1}$, $\tilde D\in\R^{p\times m_2}$, $f_1:\R_+\times\R^p\to\R^{m_1}$
and $f_2:\R_+\times\R^p\to\R^{m_2}$. The above system can be expressed in the form \eqref{eq:lure_ss}
with
\[
m:=m_1+m_2,\quad B:=(\tilde B,0),\quad D:=(0,\tilde D)\quad\mbox{and}\quad f:=\begin{pmatrix} f_1\\ f_2\end{pmatrix}.
\]
{\it Scenario 4.} In this scenario, we consider a so-called~$4$-block feedback scheme.
The underlying linear system is given by
\[
\dot x = Ax + Bu, \quad y= Cx+Du,
\]
with~$A\in\R^{n\times n}$,
\[
B=(B_1,B_2),\quad
C=\begin{pmatrix} C_1\\ C_2\end{pmatrix}\quad\mbox{and}\quad
D=\begin{pmatrix} D_{11} & D_{12}\\ D_{21} & D_{22}\end{pmatrix},
\]
where~$B_i\in\R^{n\times m_i}$,~$C_i\in\R^{p_i\times n}$ and~$D_{ij}\in\R^{p_i\times m_j}$,
$i,j=1,2$. Partitioning~$u$ and~$y$ accordingly,
\[
u=\begin{pmatrix} u_1\\ u_2\end{pmatrix},\,\,\, u_i(t)\in\R^{m_i},\quad
y=\begin{pmatrix} y_1\\ y_2\end{pmatrix},\,\,\, y_i(t)\in\R^{p_i},\quad i=1,2,
\]
consider the feedback system given by
\[
\dot x = Ax + Bu, \quad y= Cx+Du,\quad u_1=f\circ y_2,\quad u_2=v
\]
where~$f:\R_+\times\R^{p_2}\to\R^{m_1}$ and~$v$ is an external input. The system can be written as
\[
\dot x = Ax + B_1(f\circ y_2)+B_2v,\quad y_2= C_2x+D_{21}(f\circ y_2)+D_{22}v,\quad
y_1=C_1x+D_{11}u_1+D_{12}v.
\]
Note that the first two equations constitute a system of the form~\eqref{eq:lure_ss} with~$m=m_1$,~$m_{\rm e}=m_2$,~$p=p_2$,~$B_{\rm e}=B_2$, $C=C_2$, $D=D_{21}$ and~$D_{\rm e}=D_{22}$. Furthermore, we note that $y_1$ is completely determined by~$v$,~$u_1=f\circ y_1$ and~$x$, and hence by the first two equations (into which~$y_1$ does not enter).

The {\it behaviour}~$\sB(S)$ of the linear system~$S$ (or of~\eqref{eq:lsss}) is the linear subspace of all quadruples
\[
(u,v,x,y)\in L^1_{\rm loc}(\R_+,\R^m)\times L^\infty_{\rm loc}(\R_+,\R^{m_{\rm e}})\times W^{1,1}_{\rm loc}
(\R_+,\R^n)\times L^1_{\rm loc}(\R_+,\R^p)\,,
\]
which satisfy~\eqref{eq:lsss} for almost every~$t\geq 0$. The elements of~$\sB(S)$ are
called {\it trajectories} of~$S$.

Let $t_0\geq 0$. The {\it behaviour} $\sB(S^f,t_0)$ of $S^f$ (or of~\eqref{eq:lure_ss}) on
$[t_0,\infty)$ is the set of all triples
\[
(v,x,y)\in L^\infty_{\rm loc}(\R_+,\R^{m_{\rm e}})\times W^{1,1}_{\rm loc}([t_0,\infty),\R^n)\times L^1_{\rm loc}([t_0,\infty),\R^p)\,,
\]
such that $f\circ y\in L^1_{\rm loc}([t_0,\infty),\R^m)$ and $(v,x,y)$ satisfies
\eqref{eq:lure_ss} for almost every $t\geq t_0$. Elements in $\sB(S^f,t_0)$ will also be referred to as {\it trajectories} of $S^f$ on $[t_0,\infty)$ or trajectories of $S^f$ with initial time $t_0$. It is convenient to set  $\sB(S^f):= \sB(S^f,0)$, the elements of which will simply be referred to as {\it trajectories} of $S^f$.

In the following, if $t_0\geq 0$ and $z$ is a function defined on $[t_0,t_0+\tau)$, where $0<\tau\leq\infty$, we define $z^{t_0}$ on $[0,\tau)$ by $z^{t_0}(t)=z(t+t_0)$
for all $t\in[0,\tau)$. Similarly, we set $f^{t_0}(t,\xi):=f^{t_0}(t+t_0,\xi)$ for all $t\geq 0$
and all $\xi\in\R^p$. As $f$ is a Caratheodory function, so is $f^{t_0}$. We note that
\[
(v,x,y)\in\sB(S^f,t_0)\Longleftrightarrow (v^{t_0},x^{t_0},y^{t_0})\in\sB(S^{f^{t_0}})
\]
and
\[
(v,x,y)\in\sB(S^f,t_0)\Longleftrightarrow (f^{t_0}\circ y^{t_0},v^{t_0},x^{t_0},y^{t_0})\in\sB(S).
\]
If the nonlinearity $f$ does not depend on $t$, then the above equivalences remain valid when
$f^{t_0}$ is replaced by $f$.

For $t_0\geq 0$ and $t_0<\tau\leq\infty$, a triple~$(v,x,y)$ is said to be a {\it pre-trajectory} of~$S^f$ (or of \eqref{eq:lure_ss}) on $[t_0,\tau)$, if
\[
(v,x,y)\in L^\infty_{\rm loc}(\R_+,\R^{m_{\rm e}})\times W^{1,1}_{\rm loc}([t_0,\tau),\R^n)\times L^1_{\rm loc}([t_0,\tau),\R^p),
\]
$f\circ y\in L^1_{\rm loc}([t_0,\tau),\R^m)$ and~$(v,x,y)$ satisfies~\eqref{eq:lure_ss} for almost every~$t\in[t_0,\tau)$. If~$(v,x,y)$ is a pre-trajectory of~$S^f$ on~$[t_0,\tau)$, then we say that~$(v,x,y)$ is {\it maximally defined} if there does not exist a pre-trajectory
$(v,\tilde x,\tilde y)$ of~$S^f$ on~$[t_0,\tilde\tau)$ such that $\tilde\tau>\tau$ and
$(\tilde x,\tilde y)|_{[t_0,\tau)}=(x,y)$. The following result shows that every
pre-trajectory can be extended to a maximally defined pre-trajectory.
\begin{proposition}\label{pro:max_def}
Let~$(v,x,y)$ be a pre-trajectory of~\eqref{eq:lure_ss} on $[t_0,\tau)$, where
$0\leq t_0<\tau<\infty$. There exists a maximally defined pre-trajectory $(v,x_{\rm m},y_{\rm m})$ of \eqref{eq:lure_ss} on $[t_0,\tau_{\rm m})$ such that $\tau\leq\tau_{\rm m}\leq\infty$ and $(x_{\rm m},y_{\rm m})|_{[t_0,\tau)}=(x,y)$.
\end{proposition}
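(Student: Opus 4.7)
The plan is to apply Zorn's lemma to the set of pre-trajectory extensions of $(v,x,y)$. Let $\mathcal{P}$ denote the collection of all pre-trajectories $(v,\tilde x,\tilde y)$ of $S^f$ defined on some interval $[t_0,\tilde\tau)$ with $\tau\leq\tilde\tau\leq\infty$ and $(\tilde x,\tilde y)|_{[t_0,\tau)}=(x,y)$. Partially order $\mathcal{P}$ by declaring that the pre-trajectory $(v,x_1,y_1)$ on $[t_0,\tau_1)$ is $\preceq$ the pre-trajectory $(v,x_2,y_2)$ on $[t_0,\tau_2)$ whenever $\tau_1\leq\tau_2$ and $(x_2,y_2)|_{[t_0,\tau_1)}=(x_1,y_1)$. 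Since $(v,x,y)\in\mathcal{P}$, the set is non-empty. A maximal element $(v,x_{\rm m},y_{\rm m})$ of $(\mathcal{P},\preceq)$, defined on some $[t_0,\tau_{\rm m})$, will be the desired object: if some pre-trajectory $(v,\tilde x,\tilde y)$ on $[t_0,\tilde\tau)$ with $\tilde\tau>\tau_{\rm m}$ agreed with $(v,x_{\rm m},y_{\rm m})$ on $[t_0,\tau_{\rm m})$, it would automatically lie in $\mathcal{P}$ (because $(x_{\rm m},y_{\rm m})$ itself already extends $(x,y)$ past $\tau$) and strictly dominate $(v,x_{\rm m},y_{\rm m})$, a contradiction; so maximality in $\mathcal{P}$ translates into being maximally defined in the sense of the paper.

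The key step is to verify that every chain in $\mathcal{P}$ possesses an upper bound. Fix a chain $\{(v,x_\alpha,y_\alpha)\}_{\alpha\in A}$ with corresponding intervals $[t_0,\tau_\alpha)$, and set $\tau^*:=\sup_{\alpha\in A}\tau_\alpha\in(\tau,\infty]$. Define $x^*:[t_0,\tau^*)\to\R^n$ and $y^*:[t_0,\tau^*)\to\R^p$ by $(x^*(t),y^*(t)):=(x_\alpha(t),y_\alpha(t))$ for any $\alpha$ with $t<\tau_\alpha$; the chain condition renders this choice immaterial. Select an increasing sequence $(\tau_{\alpha_n})$ with $\tau_{\alpha_n}\uparrow\tau^*$, which exists because $\tau^*$ is a supremum (finite or $+\infty$). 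Every compact subinterval of $[t_0,\tau^*)$ lies in some $[t_0,\tau_{\alpha_n}]$, so the required memberships $x^*\in W^{1,1}_{\rm loc}([t_0,\tau^*),\R^n)$, $y^*\in L^1_{\rm loc}([t_0,\tau^*),\R^p)$ and $f\circ y^*\in L^1_{\rm loc}([t_0,\tau^*),\R^m)$ transfer from the analogous properties of the pre-trajectories indexed by $\alpha_n$. Moreover, \eqref{eq:lure_ss} holds almost everywhere on each $[t_0,\tau_{\alpha_n})$, hence almost everywhere on the countable union $[t_0,\tau^*)=\bigcup_n[t_0,\tau_{\alpha_n})$, since the exceptional null sets at each stage form a countable family and so have measure zero in union. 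Therefore $(v,x^*,y^*)\in\mathcal{P}$ is an upper bound for the chain, and Zorn's lemma yields the required maximal element.

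The only mildly delicate point is transferring the local integrability and absolute-continuity conditions from each $[t_0,\tau_{\alpha_n})$ to the limit interval $[t_0,\tau^*)$, and this is routine because $L^1_{\rm loc}$ and $W^{1,1}_{\rm loc}$ are defined via restrictions to compact subsets of the open right endpoint, each of which is swallowed by some $[t_0,\tau_{\alpha_n}]$. It is worth noting that the argument uses nothing beyond the measure-theoretic consistency of pre-trajectories: no local Lipschitz hypothesis on $f$, no invertibility or radial unboundedness of the family $F_t$, and no structural property of $(A,B,B_{\rm e},C,D,D_{\rm e})$ is invoked, so the result is a pure extension statement that sits logically upstream of the main existence and uniqueness results of Section \ref{sec:wp}.
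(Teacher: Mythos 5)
Your proof is correct and follows essentially the same route as the paper's: both apply Zorn's lemma to the set of pre-trajectory extensions of $(v,x,y)$, ordered by extension, and construct an upper bound for a chain by gluing along the chain and taking $\tau^*$ as the supremum of the domain endpoints. You in fact supply two details that the paper leaves implicit, namely the verification that the glued pair inherits the $W^{1,1}_{\rm loc}$/$L^1_{\rm loc}$ memberships via a countable exhaustion and the explicit check that a maximal element of the poset is maximally defined in the paper's sense; the only slip is the claim that $\tau^*\in(\tau,\infty]$, when in fact one can only conclude $\tau^*\geq\tau$ (all $\tau_\alpha$ could equal $\tau$), but this is immaterial to the argument.
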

The proof is based on an application of Zorn's lemma and is a generalization of a similar argument familiar from the theory of ordinary differential equations. For the convenience of the reader, we have included the proof in the Appendix.

A maximally defined pre-trajectory on $[t_0,\tau)$ will also be referred as a
maximally defined pre-trajectory with initial time $t_0$.
Maximally defined pre-trajectories will play an important role in the the following and
therefore we define:
\[
\tilde\sB(S^f,t_0):=\{\mbox{maximally defined pre-trajectories of $S^f$ with initial time $t_0$}\}.
\]
It is convenient to set $\tilde\sB(S^f):=\tilde\sB(S^f,0)$. Obviously, $\sB(S^f,t_0)\subset\tilde\sB(S^f,t_0)$, and furthermore,
\[
(v,x,y)\in\tilde\sB(S^f,t_0)\Longleftrightarrow (v^{t_0},x^{t_0},y^{t_0})\in\tilde\sB(S^{f^{t_0}}).
\]
If, for every maximally defined pre-trajectory $(v,x,y)\in\tilde\sB(S^f,t_0)$ with bounded interval $[t_0,\tau)$ of definition, where $t_0<\tau<\infty$, it holds that
\[
\limsup_{t\uparrow\tau}\|x(t)\|+\int_{t_0}^\tau\big(\|y(s)\|+\|f(s,y(s))\|\big) \, \rd s=\infty,
\]
then~\eqref{eq:lure_ss} is said to have the {\it blow-up property}.

The following result is not surprising: it shows that the blow-up property holds, provided
that a suitable local existence assumption is satisfied.
\begin{proposition}\label{pro:blow_up}
Assume that, for all $t_0\geq 0$, all $x^0\in\R^n$ and all $v\in L^\infty_{\rm loc}(\R_+,\R^{m_{\rm e}})$, there exists $(v,x,y)\in\tilde\sB(S^f,t_0)$ such that $x(t_0)=x^0$.
Then, for a maximally defined pre-trajectory $(v,x,y)\in\tilde\sB(S^f,t_0)$ with bounded maximal interval of definition $[t_0,\tau)$ {\rm (}where $t_0<\tau<\infty${\rm)}, it holds that $\int_{t_0}^\tau\big(\|y(t)\|+\|Bf(t,y(t))\|\big){\rm d}t=\infty$. In
particular, \eqref{eq:lure_ss} has the blow-up property.
\end{proposition}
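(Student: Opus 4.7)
The plan is a proof by contradiction: suppose $(v,x,y)\in\tilde\sB(S^f,t_0)$ is maximally defined on $[t_0,\tau)$ with $t_0<\tau<\infty$ and $\int_{t_0}^\tau(\|y(s)\|+\|Bf(s,y(s))\|)\,\rd s<\infty$, and aim to produce a strict extension, contradicting maximality.

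The first step is to show that $x$ admits a continuous extension to $\tau$. By variation of constants applied to \eqref{eq:lure_ss_a},
\[
x(t) = e^{A(t-t_0)}x^0 + \int_{t_0}^t e^{A(t-s)}\bigl(Bf(s,y(s)) + B_{\rm e}v(s)\bigr)\,\rd s,\qquad t\in[t_0,\tau).
\]
Under the contradiction hypothesis, $Bf\circ y\in L^1([t_0,\tau),\R^n)$, and, since $v\in L^\infty_{\rm loc}$ and $[t_0,\tau]$ is bounded, also $B_{\rm e}v\in L^1([t_0,\tau],\R^n)$. Hence the integral converges absolutely as $t\uparrow\tau$, so $x^*:=\lim_{t\uparrow\tau}x(t)$ exists and $x$ extends to an absolutely continuous function on $[t_0,\tau]$.

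Next, I would apply the local-existence hypothesis at initial time $\tau$ with initial state $x^*$ (and the same disturbance $v$) to obtain $(v,\tilde x,\tilde y)\in\tilde\sB(S^f,\tau)$ with $\tilde x(\tau)=x^*$, defined on a maximal interval $[\tau,\tilde\tau)$ with $\tilde\tau>\tau$. Form the concatenation $(v,\bar x,\bar y)$ on $[t_0,\tilde\tau)$ by setting $\bar x:=x$ on $[t_0,\tau]$ and $\bar x:=\tilde x$ on $(\tau,\tilde\tau)$, and similarly $\bar y:=y$ on $[t_0,\tau)$ and $\bar y:=\tilde y$ on $[\tau,\tilde\tau)$. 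The ODE and output equation in \eqref{eq:lure_ss} hold almost everywhere on each piece, continuity of $\bar x$ at $\tau$ yields $\bar x\in W^{1,1}_{\rm loc}([t_0,\tilde\tau),\R^n)$, and $\bar y\in L^1_{\rm loc}([t_0,\tilde\tau),\R^p)$ follows from $\int_{t_0}^\tau\|y\|<\infty$ combined with the pre-trajectory property of $\tilde y$.

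The principal obstacle is verifying $f\circ\bar y\in L^1_{\rm loc}([t_0,\tilde\tau),\R^m)$: because compact subintervals of $[t_0,\tilde\tau)$ contain the point $\tau$, this condition reduces to the nontrivial requirement $\int_{t_0}^\tau\|f(s,y(s))\|\,\rd s<\infty$, which is genuinely stronger than the pre-trajectory property $f\circ y\in L^1_{\rm loc}([t_0,\tau),\R^m)$. To establish it, I would rearrange the output equation \eqref{eq:lure_ss_b} as $Df(s,y(s))=y(s)-Cx(s)-D_{\rm e}v(s)$, whose right-hand side lies in $L^1([t_0,\tau],\R^p)$ (using $y\in L^1$, continuity of $x$ on $[t_0,\tau]$, and $v\in L^\infty_{\rm loc}$). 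Combined with the hypothesis $Bf\circ y\in L^1([t_0,\tau))$, this yields $\binom{B}{D}f\circ y\in L^1([t_0,\tau],\R^{n+p})$; the dynamics and output depend on $f$ only through this composite matrix, and a projection of $f$ onto $(\ker\binom{B}{D})^\perp$ (which does not affect the equations) then gives $\int_{t_0}^\tau\|f\circ y\|\,\rd s<\infty$. With this, $(v,\bar x,\bar y)$ is a pre-trajectory on $[t_0,\tilde\tau)$ strictly extending $(v,x,y)$, contradicting maximality.

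For the ``in particular'' conclusion, note that $\|Bf\|\leq\|B\|\|f\|$, so $\int\|Bf\circ y\|=\infty$ implies $\int\|f\circ y\|=\infty$; hence $\int_{t_0}^\tau(\|y\|+\|Bf\circ y\|)\,\rd s=\infty$ entails $\int_{t_0}^\tau(\|y\|+\|f\circ y\|)\,\rd s=\infty$, and thus $\limsup_{t\uparrow\tau}\|x(t)\|+\int_{t_0}^\tau(\|y(s)\|+\|f(s,y(s))\|)\,\rd s=\infty$, i.e., \eqref{eq:lure_ss} has the blow-up property.
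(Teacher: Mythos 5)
Your overall strategy coincides with the paper's: assume $\int_{t_0}^\tau(\|y\|+\|Bf\circ y\|)\,{\rm d}s<\infty$, use variation of constants to extend $x$ continuously to $\tau$, invoke the hypothesis at $(\tau,x(\tau))$ to get a local pre-trajectory beyond $\tau$, concatenate, and contradict maximality. You have also correctly put your finger on the delicate point that the paper glosses over with ``it is clear that $\ldots f\circ\tilde y\in L^1_{\rm loc}([t_0,\theta),\R^m)$'': for the concatenation to be a pre-trajectory on $[t_0,\theta)$, one really needs $\int_{t_0}^\tau\|f(s,y(s))\|\,{\rm d}s<\infty$, and the contradiction hypothesis supplies only $\int_{t_0}^\tau\|Bf(s,y(s))\|\,{\rm d}s<\infty$ (together with, via the output equation, $\int_{t_0}^\tau\|Df(s,y(s))\|\,{\rm d}s<\infty$).

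However, your proposed fix is not valid. Let $P$ denote the orthogonal projection of $\R^m$ onto $(\ker\sbm{B\\D})^\perp$. It is true that $\sbm{B\\D}f\circ y\in L^1([t_0,\tau))$ forces $\int_{t_0}^\tau\|Pf(s,y(s))\|\,{\rm d}s<\infty$, and it is true that \eqref{eq:lure_ss} involves $f$ only through $Bf$ and $Df$, so replacing $f$ by $Pf$ leaves \eqref{eq:lure_ss_a}--\eqref{eq:lure_ss_b} unchanged. But the definition of a pre-trajectory of $S^f$ requires $f\circ y$ \emph{itself} to be in $L^1_{\rm loc}$, not merely $Pf\circ y$ or $\sbm{B\\D}f\circ y$. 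Replacing $f$ by $Pf$ changes the system from $S^f$ to $S^{Pf}$; maximality of $(v,x,y)$ in $\tilde\sB(S^f,t_0)$ and maximality in $\tilde\sB(S^{Pf},t_0)$ are in general different properties, because the maximal interval of definition is determined by when the $L^1_{\rm loc}$ requirement on $f\circ y$ (not on $Pf\circ y$) fails. When $\ker\sbm{B\\D}\ne\{0\}$, the component $(I-P)f(s,y(s))$ does not appear in the equations but still contributes to $\|f(s,y(s))\|$, and there is no a priori bound on $\int_{t_0}^\tau\|(I-P)f(s,y(s))\|\,{\rm d}s$. So the projection step does not close the gap; it simply proves the proposition for the modified system $S^{Pf}$. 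To obtain the stated conclusion one would either need an extra hypothesis (for instance, that $\sbm{B\\D}$ has full column rank, or a local-integrability bound on $f$ such as \eqref{eq:single_valued}), or one should observe that the concatenation only needs to be a pre-trajectory of $S^{Pf}$, but then the maximality being contradicted is that in $\tilde\sB(S^{Pf},t_0)$, not $\tilde\sB(S^f,t_0)$ --- and the proposition is stated for $S^f$. You have identified a genuine subtlety, but the repair you offer does not resolve it.
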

\begin{proof}  Let $(v,x,y)$ be a pre-trajectory of \eqref{eq:lure_ss} defined on the finite interval $[t_0,\tau)$ and assume that $\int_{t_0}^\tau\big(\|y(t)\|+\|Bf(t,y(t)\|\big){\rm d}t<\infty$. It is sufficient to show that~$(v,x,y)$ is not maximally defined.
As $\int_{t_0}^\tau\|Bf(t,y(t)\|{\rm d}t<\infty$, an application of the variation-of-parameters
formula to \eqref{eq:lure_ss_a} shows that the limit $\lim_{t\uparrow\tau}x(t)=:x(\tau)$ exists. It follows from the hypothesis that there exists a pre-trajectory $(v,\hat x,\hat y)\in\tilde\sB(S^f,\tau)$ defined on $[\tau,\theta)$ with $\tau<\theta\leq\infty$ and such that $\hat x(\tau)=x(\tau)$. Setting
\[
\tilde x(t):=\left\{\begin{aligned}
 & x(t), \quad t_0\leq t<\tau\\  & \hat x(t), \quad \tau\leq t<\theta
\end{aligned}\right.
\qquad\mbox{and}\qquad
\tilde y(t):=\left\{\begin{aligned}
 & y(t), \quad t_0\leq t<\tau\\  & \hat y(t), \quad \tau\leq t<\theta,
\end{aligned}\right.
\]
it is clear that $\tilde x\in W^{1,1}_{\rm loc}([t_0,\theta),\R^n)$,
$\tilde y\in L^1_{\rm loc}([t_0, \theta),\R^p)$, $f\circ \tilde y\in L^1_{\rm loc}([t_0,
\theta),\R^m)$ and
\[
\dot{\tilde x}(t)=A\tilde x(t)+Bf(t,\tilde y(t))+B_{\rm e}v(t),\quad
\tilde y(t)=C\tilde x(t)+Df(t,\tilde y(t))+D_{\rm e}v(t),\quad\mbox{for a.e.
$t\in[t_0,\theta)$.}
\]
Consequently, $(v,\tilde x,\tilde y)$ is a pre-trajectory of \eqref{eq:lure_ss} on $[\tau_0,\tau_0+\theta)$ which extends $(v,x,y)$, showing that $(v,x,y)$ is not maximally defined.
\end{proof}
Note that the hypothesis in Proposition \ref{pro:blow_up} holds if, and only if, for all $t_0\geq 0$, all $x^0\in\R^n$ and all $v\in L^\infty_{\rm loc}(\R_+,\R^{m_{\rm e}})$, there exists $(v,x,y)\in\tilde\sB(S^{f^{t_0}})$ such that $x(0)=x^0$. Furthermore, in the case wherein
$f(t,\xi)=f(\xi)$ (that is, $f$ does not depend on $t$), the hypothesis in Proposition \ref{pro:blow_up} is satisfied if, and only if, for all $x^0\in\R^n$ and all $v\in L^\infty_{\rm loc}(\R_+,\R^{m_{\rm e}})$, there exists $(v,x,y)\in\tilde\sB(S^f)$ such that $x(0)=x^0$.

For later purposes, we define the forward completeness and uniqueness properties.
System \eqref{eq:lure_ss} is said to be {\it forward complete} if $\tilde\sB(S^f,t_0)=
\sB(S^f,t_0)$ for every $t_0\geq 0$, that is, every maximally defined
pre-trajectory is a trajectory. Finally, we say that \eqref{eq:lure_ss}
has the {\it uniqueness property} if, for every~$v\in L^\infty_{\rm loc}(\R_+,\R^{m_{\rm e}})$ and every $t_0\geq 0$, any two pre-trajectories
$(v,x,y)$ and $(v,\tilde x, \tilde y)$ on $[t_0,\tau)$ and $[t_0,\tilde\tau)$,
respectively, where $t_0<\tau\leq\tilde\tau\leq\infty$, coincide on $[t_0,\tau)$ if $x(t_0)=\tilde x(t_0)$.

%
The following simple examples with time-independent globally Lipschitz $f$ show that if $D\not=0$, then
\begin{itemize}
\item for given $v\in L^\infty_{\rm loc}(\R_+,\R^{m_{\rm e}})$ and $x^0\in\R^n$, the set of pre-trajectories $(v,x,y)$ such that $x(t_0)=x^0$ may be empty;
\item the blow-up property may fail to hold;
\item for given $v\in L^\infty_{\rm loc}(\R_+,\R^{m_{\rm e}})$ and $x^0\in\R^n$, the set of pre-trajectories $(v,x,y)$ such that $x(t_0)=x^0$ may contain several elements despite $f$ being (globally) Lipschitz;
\item pre-trajectories may blow up in finite-time (implying that the system is not forward complete) despite $f$ being linearly bounded.
\end{itemize}
The first three of the four examples below are from \cite{gl24}, but as they are important in the context of the present paper, we repeat them here in a somewhat abbreviated form for the benefit of the reader. Unsurprisingly, the examples will show that lack of surjectivity or injectivity of the map $I-Df$ can cause problems regarding existence, uniqueness and blow-up.
\begin{example}\label{exa:ivp}
\begin{rm}
\begin{enumerate}[label = {\bf (\alph*)}, ref={\rm (\alph*)}, leftmargin = 0ex, itemindent = 4.6ex]
\item\label{ls:ivp_ex_a}
Consider~\eqref{eq:lure_ss} with
\[
A=\begin{pmatrix} 1 & 0\\ 0 & 0\end{pmatrix},\,\,\,
B=B_{\rm e}=\begin{pmatrix} 0 \\ 1 \end{pmatrix},\,\,\, C=(1,0),\,\,\,D=D_{\rm e}=1,
\]
and nonlinearity~$f$ given by
\[
f(\xi)=\left\{ \begin{aligned} & \xi+1, & & \xi<-2, \\
& \xi/2, & &-2\leq\xi\leq 2, \\
& \xi-1, & & \xi>2.
\end{aligned}\right.
\]
Let~$x^0=(a,0)^\top$,~$a\in\R$, and assume that~$(0,x,y)$ is a  pre-trajectory defined on some interval~$[0,\tau)$ and such that~$x(0)=x^0$. Noting that $Ce^{At}B\equiv 0$ and using the variation-of-parameters formula, it follows from \eqref{eq:lure_ss} that
\begin{equation}\label{eq:ivp_ex}
y(t)-f(y(t))=e^{t}a,\quad\forall\:t\in[0,\tau).
\end{equation}
As
\[
\xi-Df(\xi)=\xi-f(\xi)=\left\{ \begin{aligned} & -1, & & \xi<-2, \\
& \xi/2, & &-2\leq\xi\leq 2, \\
& 1, & & \xi>2,
\end{aligned}\right.
\]
we see that \eqref{eq:ivp_ex} does not have a solution
for any~$t\geq 0$ if~$|a|>1$. Hence, there does not exist any pre-trajectory~$(0,x,y)$
such that~$x(0)=(a,0)^\top$ if~$|a|>1$. We note that $I-Df$ is not surjective.
\item\label{ls:ivp_ex_b} We consider the example introduced in part~\ref{ls:ivp_ex_a}, now with~$a=1/2$. In this case, we see that~\eqref{eq:ivp_ex}, has the unique solution~$y(t)=e^t$ for every~$t\in[0,\ln 2)$ and does not have a solution whenever $t>\ln 2$.
Setting~$y(t):=e^t$ for all~$t\in[0,\ln 2)$ and
\[
x(t):=e^{At}x^0+\int_0^te^{A(t-s)}Bf(y(s)){\rm d}s=\begin{pmatrix} e^t/2\\ (e^t-1)/2\end{pmatrix},\quad\forall\:t\in[0,\ln 2),
\]
we conclude that~$(0,x,y)$ is a maximally defined pre-trajectory satisfying
$x(0)=(1/2,0)^\top$. It is clear that
\[
\limsup_{t\to\ln 2}\|x(t)\|+\int_0^{\ln 2}(|y(s)|+|f(y(s))|)\, {\rm d}s <\infty,
\]
showing that the system does not have the blow-up property.
\item\label{ls:ivp_ex_c}
Consider the scalar system
\[
\dot x=-x+f\circ y+v,\quad y=x+f\circ y+v,
\]
which is~\eqref{eq:lure_ss} with~$A=-1$ and~$B=B_{\rm e}=C=D=D_{\rm e}=1$. Consider the nonlinearity~$f$ given by
\[
f(\xi)=\left\{ \begin{aligned} & -3/4, & & \xi<-1/2, \\
& \xi(1-\xi), & &-1/2\leq \xi\leq 1/2, \\
& 1/4, & & \xi>1/2.
\end{aligned}\right.
\]
It is clear that $I-Df$ is not injective. Let~$x_1(t)\equiv 1/4$,~$y_1(t)\equiv 1/2$ and
\[
x_2(t)=\left\{ \begin{aligned} & (e^{-t}-1/2)^2, & & 0\leq t\leq\ln 2, \\
& 0, & & t>\ln 2, \end{aligned} \right.\qquad
y_2(t)=-\sqrt{x_2(t)}=\left\{ \begin{aligned} & 1/2-e^{-t}, & & 0\leq t\leq\ln 2, \\
& 0, & & t>\ln 2\,. \end{aligned} \right.
\]
It is easy to check that~$(0,x_1,y_1), (0,x_2,y_2)\in\sB(S^f)$. Since $x_1(0)=1/4=
x_2(0)$, we see that the system does not have the uniqueness property. Obviously, $f$ is globally Lipschitz, and thus, we conclude that the non-uniqueness is caused by the feedthrough and not by an absence of the Lipschitz property.
\item\label{ls:ivp_ex_d}
Consider \eqref{eq:lure_ss} with
\[
A=\begin{pmatrix} 1 & -1\\ -1 & 1\end{pmatrix},\,\,\,
B=B_{\rm e}=\begin{pmatrix} 1 \\ -1 \end{pmatrix},\,\,\, C=(1,1),\,\,\,D=D_{\rm e}=1,
\]
nonlinearity $f(\xi)=\xi-\arctan\xi$ and input $v(t)=t$. Set
$y(t):=\tan t$ for $t\in[0,\pi/2)$ and define
\[
x(t):=e^{2t}B\left(\int_0^te^{-2s}\tan s\,{\rm d}s-1\right),\quad\forall\,t\in[0,\pi/2).
\]
Routine calculations show that $(v,x,y)$ is a pre-trajectory of \eqref{eq:lure_ss} on $[0,\pi/2)$.
Since
\[
\int_0^t y(s){\rm d}s=-\ln(\cos t)\to\infty\quad\mbox{and}\quad \|x(t)\|\to\infty
\quad\mbox{as $t\uparrow\pi/2$},
\]
we see that the pre-trajectory $(v,x,y)$ blows up in finite time, and thus, the system is not forward complete. As $f$ is linearly bounded, we conclude that the lack of forward completeness is caused by the feedthrough and not by superlinear growth of the nonlinearity.
\hfill$\Diamond$
\end{enumerate}
\end{rm}
\end{example}
\section{Lur'e systems with feedthrough: well-posedness results}\label{sec:wp}
The map $F:\R_+\times\R^p\to\R^p$ given by
\[
F(t,\xi):=\xi-Df(t,\xi)\quad\forall\,(t,\xi)\in\R_+\times \R^p
\]
and certain of its properties will play a pivotal role in the rest of the paper.
Recall that $f$ is always assumed to be a Caratheodory function, and so $F$ is
a Caratheodory function.

For each $t\geq 0$, we set $f_t(\xi):=f(t,\xi)$ and $F_t(\xi):=F(t,\xi)$ and note that $f_t$ and $F_t$ are continuous maps from $\R^p$ to $\R^m$ and $\R^p$, respectively. In the case wherein $F_t$ is invertible for every $t \geq 0$, the variable $y$ can be eliminated from \eqref{eq:lure_ss} which, at least formally, yields an ordinary
differential equation. If $F_t$ is not invertible for all $t\geq 0$, then the output $y$ cannot be analytically expressed in terms of $x$ and $v$, and so, in general, \eqref{eq:lure_ss} is not a ``standard'' differential equation. We start by considering the case wherein $F_t$ is invertible for all $t\geq 0$.
\subsection{$F_t$ invertible for every $t$}\label{subsec:invertible}
In the following, we will say that $F_t$ is radially unbounded, {\it locally uniformly in $t$}, if, for all $\rho>0$ and all compact $T\subset\R_+$, there exists $\sigma\geq 0$ such that $\|F_t(\xi)\|\geq\rho$ for all $t\in T$ and all $\xi$ such that $\|\xi\|\geq\sigma$.
\begin{theorem}\label{thm:single_valued}
Assume that $F_t$ is locally injective for every $t\geq 0$ and that
$F_t$ is radially unbounded, locally uniformly in $t$. Furthermore,
assume that, for every compact subset $K\subset\R^p$, there exists
$\varphi\in L^1_{\rm loc}(\R_+,\R_+)$ such that
\begin{equation}\label{eq:single_valued}
\sup_{\xi\in K}\|f(t,\xi)\|\leq\varphi(t)\quad\forall\,t\geq 0.
\end{equation}
Let $v\in L^\infty_{\rm loc}(\R_+,\R^{m_{\rm e}})$, $t_0\geq 0$ and $x^0\in\R^n$. The following statements hold.
\begin{enumerate}[label = {\bf (\arabic*)}, ref={\rm (\arabic*)}, leftmargin = 0ex, itemindent = 4.6ex]
\item\label{ls:single_valued_s_1}
There exists $(v,x,y)\in\tilde\sB(S^f,t_0)$ such that $x(t_0)=x^0$.
\item\label{ls:single_valued_s_2}
If $(v,x,y)\in\tilde\sB(S^f,t_0)$ with bounded maximal interval of definition $[t_0,\tau)$,
where $t_0<\tau<\infty$, then $\int_{t_0}^\tau\|Bf(t,y(t))\|{\rm d}t=\infty$, $y\not\in L^\infty([0,\tau),\R^p)$ and $\|x(t)\|\to\infty$ as $t\uparrow\tau$. In particular,
system \eqref{eq:lure_ss} has the blow-up property.
\end{enumerate}
\end{theorem}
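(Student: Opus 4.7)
The plan is to reduce \eqref{eq:lure_ss} to an ordinary differential equation in $x$ alone by globally inverting the output equation \eqref{eq:lure_ss_b}. By \Cref{thm:local_global}\ref{ls:local_global_s_1}, the local injectivity of $F_t$ together with the radial unboundedness of $F_t$ (which follows from the locally uniform radial unboundedness hypothesis applied at any single time) gives, for each $t\geq 0$, that $F_t:\R^p\to\R^p$ is a homeomorphism. Define $G:\R_+\times\R^p\to\R^p$ by $G(t,\eta):=F_t^{-1}(\eta)$. The crucial technical step is to verify that $G$ is a Caratheodory function which maps $T\times K$ into a bounded set whenever $T\subset\R_+$ and $K\subset\R^p$ are compact. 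Boundedness is a direct reformulation of the locally uniform radial unboundedness hypothesis: with $\rho:=\max_{\eta\in K}\|\eta\|$ and $\sigma$ associated to $T$ and $\rho$, one has $\|G(t,\eta)\|\leq\sigma$ for all $(t,\eta)\in T\times K$. Continuity of $G$ in the second variable is immediate from the homeomorphism property. Measurability of $t\mapsto G(t,\eta)$ for fixed $\eta$ is the subtler point, and constitutes the main obstacle; it follows from a Filippov-type measurable implicit function argument applied to the equation $F(t,\xi)=\eta$, using that $F$ is Caratheodory and each $F_t$ is bijective.

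With $G$ in place, substituting $y(t)=G(t,Cx(t)+D_{\rm e}v(t))$ into \eqref{eq:lure_ss_a} produces the ordinary differential equation
\[
\dot x(t)=h(t,x(t)),\qquad h(t,\xi):=A\xi+Bf\bigl(t,G(t,C\xi+D_{\rm e}v(t))\bigr)+B_{\rm e}v(t).
\]
Using $v\in L^\infty_{\rm loc}$, the boundedness of $G$ on compact-times-compact sets, and \eqref{eq:single_valued}, one checks that $h$ is Caratheodory and that for every compact $T\subset\R_+$ and every compact $K_x\subset\R^n$ there exists $\psi\in L^1(T,\R_+)$ with $\|h(t,\xi)\|\leq\psi(t)$ for a.e.\ $t\in T$ and all $\xi\in K_x$. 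The Caratheodory local existence theorem then yields a local solution $x$; setting $y:=G(\,\cdot\,,Cx+D_{\rm e}v)$ produces a pre-trajectory, which \Cref{pro:max_def} extends to a maximally defined one. This proves \ref{ls:single_valued_s_1}.

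For \ref{ls:single_valued_s_2}, fix $(v,x,y)\in\tilde\sB(S^f,t_0)$ maximally defined on $[t_0,\tau)$ with $\tau<\infty$. If $\int_{t_0}^\tau\|Bf(s,y(s))\|\rd s<\infty$, then the variation-of-parameters formula applied to \eqref{eq:lure_ss_a}, combined with $v\in L^\infty_{\rm loc}$, shows that $x(t)$ has a limit $x^\ast$ as $t\uparrow\tau$; applying \ref{ls:single_valued_s_1} at initial time $\tau$ with initial state $x^\ast$ and concatenating exactly as in the proof of \Cref{pro:blow_up} produces a strict extension of the pre-trajectory, contradicting maximality. Hence the integral diverges. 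Consequently, if $y$ were bounded on $[t_0,\tau)$, its image would lie in a compact set $K$, whereupon \eqref{eq:single_valued} would give $\|Bf(\,\cdot\,,y(\,\cdot\,))\|\in L^1([t_0,\tau))$, a contradiction; so $y\notin L^\infty([t_0,\tau),\R^p)$.

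It remains to show $\|x(t)\|\to\infty$ as $t\uparrow\tau$. Suppose otherwise, so that $M:=\liminf_{t\uparrow\tau}\|x(t)\|<\infty$, and pick $t_n\uparrow\tau$ with $\|x(t_n)\|\leq M+1$. By the local-boundedness property of $h$ established above, there exists $\psi\in L^1([t_0,\tau],\R_+)$ such that $\|h(t,\xi)\|\leq\psi(t)$ for a.e.\ $t\in[t_0,\tau]$ and all $\xi$ with $\|\xi\|\leq 2(M+1)$. Choosing $n$ so large that $\int_{t_n}^\tau\psi(s)\,\rd s\leq M+1$, a standard continuity-of-solution argument (so long as $\|x(\,\cdot\,)\|\leq 2(M+1)$ on $[t_n,t]$ one has $\|x(t)\|\leq\|x(t_n)\|+\int_{t_n}^t\psi\leq 2(M+1)$, which then extends to all of $[t_n,\tau)$) shows that $\|x(t)\|\leq 2(M+1)$ on $[t_n,\tau)$. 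Then $h(\,\cdot\,,x(\,\cdot\,))$ is integrable on $[t_n,\tau)$, so $x$ extends continuously to $\tau$, again contradicting maximality. The blow-up property follows at once from the three established conclusions.
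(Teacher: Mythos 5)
Your strategy mirrors the paper's: invoke Theorem \ref{thm:local_global}\ref{ls:local_global_s_1} to make $F_t$ a homeomorphism, verify that $G(t,\eta)=F_t^{-1}(\eta)$ is Caratheodory via a Filippov selection argument, reduce \eqref{eq:lure_ss} to an ODE $\dot x=h(t,x)$ with an $L^1$-in-$t$ majorant on compact $x$-sets, and conclude. For part \ref{ls:single_valued_s_1} this is exactly the paper's proof (the paper uses \cite[Theorem 2.3.13]{v00} for Filippov's theorem and then cites standard ODE existence/continuation results; you instead cite \Cref{pro:max_def} explicitly, which amounts to the same thing).

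For part \ref{ls:single_valued_s_2} you give a self-contained argument where the paper cites ODE continuation theory, which is a reasonable and arguably more transparent route. There is, however, a small gap in your first step. You claim that $\int_{t_0}^\tau\|Bf(t,y(t))\|\,\rd t<\infty$ allows one to ``concatenate exactly as in the proof of \Cref{pro:max_def}''-- but you presumably mean \Cref{pro:blow_up}, and the proof of \Cref{pro:blow_up} requires the \emph{stronger} assumption $\int_{t_0}^\tau(\|y\|+\|Bf\circ y\|)\,\rd t<\infty$. The extra integrability of $y$ is needed precisely so that the concatenated $\tilde y$ lies in $L^1_{\rm loc}([t_0,\theta),\R^p)$, which is part of the definition of a pre-trajectory. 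Under the hypotheses of \Cref{thm:single_valued} this is easily repaired: once $\int_{t_0}^\tau\|Bf\circ y\|\,\rd t<\infty$ forces $x$ to have a limit (hence be bounded) on $[t_0,\tau)$, the identity $y(t)=F_t^{-1}(Cx(t)+D_{\rm e}v(t))$ together with the locally uniform radial unboundedness of $F_t$ (via statement \ref{ls:assumptions_s_1} of \Cref{lem:assumptions}) forces $y\in L^\infty([t_0,\tau),\R^p)$ and hence $y\in L^1([t_0,\tau),\R^p)$. You should state this explicitly before invoking the concatenation. (Alternatively, your later argument showing $\|x(t)\|\to\infty$ is logically prior and self-contained; running it first and then reading off divergence of the integral from the variation-of-parameters formula, as the paper does for \Cref{thm:inclusion_wp}, avoids the issue entirely.) Once that is patched, the rest of your part \ref{ls:single_valued_s_2} argument is sound.
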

Note that local injectivity of $F_t$ together with radial unboundedness of $F_t$ is equivalent to
$F_t$ being a homeomorphism, see statement \ref{ls:local_global_s_1} of \Cref{thm:local_global}.
\begin{remark}\label{rem:a.e.}
\begin{rm}
The above result remains valid under slightly weaker assumptions on $F$. Assume that \eqref{eq:single_valued} holds, $F_t$ is locally injective for almost every $t\geq 0$
and $F_t$ is radially unbounded, {\it locally essentially uniformly in $t$}, in the sense
that, for all $\rho>0$ and all compact $T\subset\R_+$, there exists $\sigma\geq 0$ such that $\|F_t(\xi)\|\geq\rho$ for almost every $t\in T$ and all $\xi$ with $\|\xi\|\geq\sigma$.
We claim that the conclusions of \Cref{thm:single_valued}
are still valid. To see this, note that there exists a null set $E\subset\R_+$ such that
$F_t$ is locally injective for every $t\in\R_+\backslash E$, and, for all $\rho>0$ and all compact $T\subset\R_+$, there exist $\sigma\geq 0$ such that $\|F_t(\xi)\|\geq\rho$ for all $t\in T\backslash E$ and all $\xi$ with $\|\xi\|\geq\sigma$. Defining
\[
\hat f(t,\xi):=\left\{ \begin{aligned} & f(t,\xi), & & (t,\xi)\in(T\backslash E)\times\R^p, \\
& 0, & & (t,\xi)\in E\times\R^p
\end{aligned}\right.\qquad\mbox{and}\qquad
\hat F_t(\xi):=\hat F(t,\xi):=\xi-D\hat f(t,\xi),
\]
it is clear that $\hat f$ is a Caratheodory function, $\tilde\sB(S^{\hat f},t_0)=
\tilde\sB(S^f,t_0)$, $\hat F_t$ is locally injective
for every $t\geq 0$ and $\hat F_t$ is radially unbounded, locally uniformly in $t$. Hence, the above claim follows from an application of \Cref{thm:single_valued} to system \eqref{eq:lure_ss} with $f$ replaced by $\hat f$.
\hfill$\Diamond$
\end{rm}
\end{remark}
Whilst \Cref{thm:single_valued} is a special case of the more general \Cref{thm:inclusion_wp} below,  we provide a direct proof (not based on \Cref{thm:inclusion_wp}), because we feel it
is of interest to see how \eqref{eq:lure_ss} can be written in the form of a ``standard'' differential equation and how statements \ref{ls:single_valued_s_1} and \ref{ls:single_valued_s_2} then follow from the theory of ordinary differential equations.

{\it Proof of \Cref{thm:single_valued}.} Let $v\in L^\infty_{\rm loc}(\R_+,\R^{m_{\rm e}})$, $t_0\geq 0$ and $x^0\in\R^n$. Invoking \Cref{thm:local_global}, we see that the hypotheses on $F_t$ guarantee that $F_t$ is homeomorphism for all $t\geq 0$. Hence, for arbitrary, $\xi\in\R^p$,
we have that $\xi\in F_t(\R^p)=F(t,\R^p)$ for every $t\geq 0$, and it follows from Filippov's selection theorem (see, for example, \cite[Theorem 2.3.13]{v00})
that there exists a measurable function $w:\R_+\to\R^p$ such that $\xi=F(t,w(t))=F_t(w(t))$ for all $t\geq 0$. Hence, $F_t^{-1}(\xi)=w(t)$ for all $t\geq 0$, showing that the function $g:\R_+\times\R^p\to\R^p$ defined by $g(t,\xi):=F_t^{-1}(\xi)$ is a Caratheodory function. Eliminating $y$ from \eqref{eq:lure_ss} leads to
\begin{equation}\label{eq:lure_ss'}
\dot x(t)=Ax(t)+Bh(t,x(t))+B_{\rm e}v(t),\quad\mbox{where \,\,$h(t,z):=f\big(t,g(t,Cz+D_{\rm e}v(t))\big)$}.
\end{equation}
As $f$ and $g$ are Caratheodory functions, it follows that $h$ is also a Caratheodory function.
Furthermore, let $T\subset\R_+$ and $\Gamma\subset\R^n$ be compact and choose $\rho>0$ such that $Cz+D_{\rm e}v(t)\in\overline{\B(0,\rho)}$ for all $z\in\Gamma$ and for almost every $t\in T$. As $F_t$
is radially unbounded, uniformly for $t\in T$, the set $F_t^{-1}(\overline{\B(0,\rho)})$
is bounded, uniformly for $t\in T$, implying that there exists a compact set $K\subset\R^p$ such that
$g(t,\overline{\B(0,\rho)})=F_t^{-1}(\overline{\B(0,\rho)})\subset K$ for all $t\in T$. It follows now from \eqref{eq:single_valued} that there exists $\varphi\in L^1(T,\R_+)$ such that $\sup_{z\in\Gamma}\|h(t,z)\|\leq\varphi(t)$ for almost every $t\in T$. Hence, $h$ satisfies
the conditions required to apply well known existence and continuation results from the theory of ordinary differential equations to system \eqref{eq:lure_ss'}. Specifically,
statements \ref{ls:single_valued_s_1} and \ref{ls:single_valued_s_2}
follow from \cite[Chapter 2, Sec.\,1]{cl55} or \cite[Chapter III, \S 10, Supp.\,II]{w98}
(statement \ref{ls:single_valued_s_1} follows also from \cite[Section 7.4]{v04}).
\hfill$\Box$

The following corollary provides a condition ensuring forward completeness and existence
of trajectories for every initial condition.
\begin{corollary}\label{cor:single_valued}
Assume that $F_t$ is locally injective for every $t\geq 0$, for every compact subset $K\subset\R^p$, there exists $\varphi\in L^1_{\rm loc}(\R_+,\R_+)$ such that \eqref{eq:single_valued} holds, and,
for all compact sets $T\subset\R_+$, there exist $\rho>0$ and $c>0$ such that $c\|D\|<1$ and
\begin{equation}\label{eq:al_bound}
\sup_{\|\xi\|\geq\rho}\frac{\|f(t,\xi)\|}{\|\xi\|}\leq c\qquad\mbox{for a.e. $t\in T$}.
\end{equation}
Then $\tilde\sB(S^f,t_0)=\sB(S^f,t_0)$ for every $t_0\geq 0$ {\rm (}that is, \eqref{eq:lure_ss} is forward complete{\rm )}. Moreover, for all $v\in L^\infty_{\rm loc}(\R_+,\R^{m_{\rm e}})$, $t_0\geq 0$ and $x^0\in\R^n$, there exists $(v,x,y)\in\sB(S^f,t_0)$ such that $x(t_0)=x^0$.
\end{corollary}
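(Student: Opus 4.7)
The plan is to deduce this corollary from \Cref{thm:single_valued} (in the version of \Cref{rem:a.e.}) by first verifying the radial unboundedness hypothesis and then ruling out the blow-up alternative of statement \ref{ls:single_valued_s_2} via an \emph{a priori} bound on $\|x(t)\|$.

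First I would check the locally essentially uniform radial unboundedness of $F_t$. Fix a compact $T\subset\R_+$, and let $\rho,c>0$ be as in \eqref{eq:al_bound}. For a.e.\ $t\in T$ and all $\xi$ with $\|\xi\|\geq\rho$, the triangle inequality gives
\[
\|F_t(\xi)\|\geq \|\xi\|-\|D\|\,\|f(t,\xi)\|\geq (1-c\|D\|)\|\xi\|,
\]
and since $1-c\|D\|>0$, this delivers the required essentially uniform radial unboundedness on $T$. Combined with the local injectivity assumption and with \eqref{eq:single_valued}, \Cref{rem:a.e.} furnishes a maximally defined pre-trajectory $(v,x,y)\in\tilde\sB(S^f,t_0)$ with $x(t_0)=x^0$ and maximal interval $[t_0,\tau_{\rm m})$ for any given $t_0\geq 0$, $x^0\in\R^n$ and $v\in L^\infty_{\rm loc}(\R_+,\R^{m_{\rm e}})$.

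Next I would derive an affine bound on $\|f(t,\xi)\|$ which is valid for all $\xi\in\R^p$. Applying \eqref{eq:single_valued} to the compact set $K=\overline{\B(0,\rho)}$ supplies $\varphi\in L^1_{\rm loc}(\R_+,\R_+)$ with $\|f(t,\xi)\|\leq\varphi(t)$ for $\|\xi\|\leq\rho$; combined with \eqref{eq:al_bound} this yields
\[
\|f(t,\xi)\|\leq\varphi(t)+c\|\xi\|\quad\mbox{for all $\xi\in\R^p$ and a.e.\ $t\in T$}.
\]
Substituting $\xi=y(t)$ into the output equation \eqref{eq:lure_ss_b} and using $c\|D\|<1$, I solve for $\|y(t)\|$ to obtain
\[
\|y(t)\|\leq\frac{1}{1-c\|D\|}\bigl(\|C\|\,\|x(t)\|+\|D\|\,\varphi(t)+\|D_{\rm e}\|\,\|v(t)\|\bigr),
\]
and feeding this back through the affine bound on $f$ yields $\|Bf(t,y(t))\|\leq \alpha(t)+\beta\|x(t)\|$ for some $\alpha\in L^1_{\rm loc}(\R_+,\R_+)$ and some constant $\beta\geq 0$, both depending only on $T$, $v$ and the model data.

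Finally, to rule out $\tau_{\rm m}<\infty$, I would argue by contradiction. Assume $\tau_{\rm m}<\infty$ and take $T=[t_0,\tau_{\rm m}]$. The variation-of-parameters formula applied to \eqref{eq:lure_ss_a} gives, for $t\in[t_0,\tau_{\rm m})$,
\[
\|x(t)\|\leq M\|x^0\|+M\int_{t_0}^t\bigl(\|Bf(s,y(s))\|+\|B_{\rm e}\|\,\|v(s)\|\bigr)\,\rd s,
\]
where $M:=\sup_{s\in[0,\tau_{\rm m}-t_0]}\|e^{As}\|$. Inserting the affine bound derived above and applying Gronwall's inequality produces $x\in L^\infty([t_0,\tau_{\rm m}),\R^n)$, contradicting statement \ref{ls:single_valued_s_2} of \Cref{thm:single_valued}, which asserts that $\|x(t)\|\to\infty$ as $t\uparrow\tau_{\rm m}$. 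Consequently $\tau_{\rm m}=\infty$, which establishes forward completeness and the existence of a trajectory $(v,x,y)\in\sB(S^f,t_0)$ with $x(t_0)=x^0$. The main bookkeeping step will be making sure that the bounds on $f$ transfer cleanly through the implicit output equation; everything afterwards is a standard Gronwall argument.
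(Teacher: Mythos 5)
Your proof is correct and arrives at the same conclusion as the paper's, but it closes the argument by a somewhat different mechanism. Both you and the paper first observe that~\eqref{eq:al_bound} together with $c\|D\|<1$ gives the locally essentially uniform radial unboundedness needed to invoke \Cref{thm:single_valued} via \Cref{rem:a.e.}, and both derive the affine bound $\|f(t,\xi)\|\leq\varphi(t)+c\|\xi\|$. The paper then passes to the reduced ODE~\eqref{eq:lure_ss'}: it shows the right-hand side $h(t,z)=f\bigl(t,F_t^{-1}(Cz+D_{\rm e}v(t))\bigr)$ satisfies a linear growth bound $\|h(t,z)\|\leq\varphi_1(t)+c_2\|z\|$ (the key intermediate step being the estimate $\|g(t,\xi)\|\leq c_1\|\xi\|$ with $c_1=1/(1-c\|D\|)$ for $g(t,\xi)=F_t^{-1}(\xi)$, which is the same $1/(1-c\|D\|)$ factor that appears in your bound on $\|y(t)\|$, just phrased in terms of the inverse map rather than the output equation), and then cites \cite[Proposition 2.1.19]{hp05} for forward completeness of the ODE. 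You instead stay with the Lur'e system itself: you bound $\|y(t)\|$ directly from the implicit output equation~\eqref{eq:lure_ss_b}, convert that into an affine bound $\|Bf(t,y(t))\|\leq\alpha(t)+\beta\|x(t)\|$, and apply Gronwall to the variation-of-parameters formula to conclude $x\in L^\infty([t_0,\tau_{\rm m}),\R^n)$, which contradicts the blow-up alternative $\|x(t)\|\to\infty$ from statement~\ref{ls:single_valued_s_2} of \Cref{thm:single_valued}. The two routes rest on the same estimate (since $F_t(y(t))=Cx(t)+D_{\rm e}v(t)$, bounding $\|y(t)\|$ via the output equation is equivalent to bounding $\|F_t^{-1}\|$), but yours is more self-contained: it avoids the external citation and makes the no-blow-up mechanism explicit via Gronwall. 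The paper's route is a little shorter once the reduced ODE~\eqref{eq:lure_ss'} and the cited forward-completeness result are in hand.
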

\begin{proof} As $c\|D\|<1$, it follows from \eqref{eq:al_bound} that $F_t$ is radially unbounded, locally essentially uniformly in $t$ in the sense of \Cref{rem:a.e.}. Consequently, by \Cref{rem:a.e.}, the conclusions of \Cref{thm:single_valued} hold and it is sufficient to show that $\tilde\sB(S^f,t_0)=\sB(S^f,t_0)$. To this end, define $g$ and $h$ as in
the proof of \Cref{thm:single_valued}, that is, $g(t,\xi)=F_t^{-1}(\xi)$ and $h$ is defined as in \eqref{eq:lure_ss'}. As $F_t(g(t,\xi))=\xi$, it follows that
\begin{equation}\label{eq:g_ineq}
\|g(t,\xi)\|-\|D\|\|f(t,g(t,\xi))\|\leq\|\xi\|\quad\forall\,\xi\in\R^p,\,\,\forall\,t\geq 0.
\end{equation}
Let $\tau\in(t_0,\infty)$. There exist $\rho>0$ and $c>0$ such that $c\|D\|<1$ and
$\|f(t,\xi)\|\leq c\|\xi\|$ for all $\xi\in\R^p$ with $\|\xi\|\geq\rho$
and for almost every $t\in [t_0,\tau]$. Hence, by \eqref{eq:g_ineq}, the following implication
holds for almost every $t\in[t_0,\tau]$:
\[
\|g(t,\xi)\|\geq\rho\quad\Longrightarrow\quad\|g(t,\xi)\|\leq c_1\|\xi\|,\qquad
\mbox{where $c_1:=1/(1-c\|D\|)$}.
\]
Invoking \eqref{eq:single_valued} with $K=\overline{\B(0,\rho)}$, we see that there exists $\varphi\in L^1([t_0,\tau],\R_+)$ such that
\[
\|f(t,g(t,\xi))\|\leq\varphi(t)+cc_1\|\xi\|\quad\forall\,\xi\in\R^p,\,\,\,
\mbox{for a.e. $t\in[t_0,\tau]$.}
\]
This in turn implies that there exist $\varphi_1\in L^1([t_0,\tau],\R_+)$ and $c_2>0$ such that
$\|h(t,z)\|\leq\varphi_1(t)+c_2\|z\|$ for all $z\in\R^p$ and almost every $t\in [t_0,\tau]$.
It follows now from an application of \cite[Proposition 2.1.19]{hp05} to \eqref{eq:lure_ss'}
that $\tilde\sB(S^f,t_0)=\sB(S^f,t_0)$.
\end{proof}

The next result provides a sufficient condition guaranteeing existence for all initial conditions
and the blow-up and uniqueness properties.
\begin{theorem}\label{thm:single_valued_uniqueness}
Assume that there exists $\xi_0\in\R^p$ such that the function
$t\mapsto f(t,\xi_0)$ is locally integrable and $F_t$ is radially unbounded, locally
uniformly in $t$. Furthermore, assume that, for all compact sets $T\subset\R_+$ and
$K\subset\R^p$, there exists $\lambda\in L^1_{\rm loc}(\R_+,\R_+)$ and $\varepsilon>0$ such that
\begin{equation}\label{eq:lipschitz}
\|f(t,\xi)-f(t,\zeta)\|\leq\lambda(t)\|\xi-\zeta\|\quad\forall\,\xi,\zeta\in K,\,\,\forall\,t\geq 0
\end{equation}
and
\begin{equation}\label{eq:lower_lipschitz}
\|F(t,\xi)-F(t,\zeta)\|\geq\varepsilon\|\xi-\zeta\|\quad\forall\,\xi,\zeta\in K,\,\,\forall\,t\in T.
\end{equation}
Then statements \ref{ls:single_valued_s_1} and \ref{ls:single_valued_s_2} of \Cref{thm:single_valued} hold
and \eqref{eq:lure_ss} has the uniqueness property.
\end{theorem}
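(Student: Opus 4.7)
The strategy is to first deduce statements \ref{ls:single_valued_s_1} and \ref{ls:single_valued_s_2} by reducing to \Cref{thm:single_valued}, and then to establish the uniqueness property by eliminating $y$ to obtain a Caratheodory ordinary differential equation whose right-hand side is Lipschitz in the state with a locally integrable Lipschitz constant.

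First I would verify the hypotheses of \Cref{thm:single_valued}. The lower Lipschitz bound \eqref{eq:lower_lipschitz} immediately yields that $F_t$ is locally injective for every $t\geq 0$, since two distinct points in a common compact neighbourhood cannot have the same image under $F_t$. Radial unboundedness of $F_t$, locally uniformly in $t$, is assumed directly. For condition \eqref{eq:single_valued}, given compact $K\subset\R^p$, set $R:=\sup_{\xi\in K}\|\xi-\xi_0\|$; applying \eqref{eq:lipschitz} to $K\cup\{\xi_0\}$ yields $\lambda\in L^1_{\rm loc}(\R_+,\R_+)$ with
\[
\|f(t,\xi)\|\leq\|f(t,\xi_0)\|+R\lambda(t)\quad\forall\,\xi\in K,\,\,\forall\,t\geq 0,
\]
and the right-hand side lies in $L^1_{\rm loc}(\R_+,\R_+)$ by the integrability hypothesis on $t\mapsto f(t,\xi_0)$. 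Hence \Cref{thm:single_valued} applies and statements \ref{ls:single_valued_s_1} and \ref{ls:single_valued_s_2} follow.

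Next I would establish uniqueness. By statement \ref{ls:local_global_s_1} of \Cref{thm:local_global}, $F_t$ is a homeomorphism of $\R^p$ for every $t\geq 0$, and, as in the proof of \Cref{thm:single_valued}, $g(t,\xi):=F_t^{-1}(\xi)$ is a Caratheodory function. The key technical step is to show that, for every compact $T\subset\R_+$ and every compact $L\subset\R^p$, there exists $M>0$ such that
\[
\|g(t,\zeta_1)-g(t,\zeta_2)\|\leq M\|\zeta_1-\zeta_2\|\quad\forall\,t\in T,\,\,\forall\,\zeta_1,\zeta_2\in L.
\]
The locally uniform radial unboundedness of $F_t$ produces a compact set $K\subset\R^p$ with $g(t,L)\subset K$ for all $t\in T$; then \eqref{eq:lower_lipschitz} applied to this $T$ and $K$ furnishes $\varepsilon>0$ such that, writing $\xi_i:=g(t,\zeta_i)\in K$,
\[
\varepsilon\|g(t,\zeta_1)-g(t,\zeta_2)\|=\varepsilon\|\xi_1-\xi_2\|\leq\|F_t(\xi_1)-F_t(\xi_2)\|=\|\zeta_1-\zeta_2\|,
\]
so $M=1/\varepsilon$ works.

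Finally, let $(v,x,y)$ and $(v,\tilde x,\tilde y)$ be pre-trajectories on $[t_0,\tau)$ and $[t_0,\tilde\tau)$, respectively, with $x(t_0)=\tilde x(t_0)$ and $t_0<\tau\leq\tilde\tau$. Injectivity of $F_t$ applied to \eqref{eq:lure_ss_b} forces $y(t)=g(t,Cx(t)+D_{\rm e}v(t))$, and likewise for $\tilde y$, so both $x$ and $\tilde x$ satisfy the Caratheodory ODE \eqref{eq:lure_ss'} with $h(t,z):=f(t,g(t,Cz+D_{\rm e}v(t)))$. Combining the Lipschitz estimate on $g$ just obtained with \eqref{eq:lipschitz} and the local essential boundedness of $v$ shows that, on every compact subinterval of $[t_0,\tau)$ and every compact $\Gamma\subset\R^n$, the function $h(t,\cdot)$ is Lipschitz on $\Gamma$ with Lipschitz constant in $L^1_{\rm loc}(\R_+,\R_+)$. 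Standard uniqueness for Caratheodory differential equations (for instance \cite[Chapter 2, Sec.\,1]{cl55}) then gives $x\equiv\tilde x$ on $[t_0,\tau)$, and $y\equiv\tilde y$ follows from the injectivity of $F_t$. The principal obstacle is the intermediate step of converting the one-sided Lipschitz bound \eqref{eq:lower_lipschitz} — stated only on compact sets in the domain of $F_t$ — into a genuine Lipschitz bound for $F_t^{-1}$ on compact sets in its range, uniformly in $t$; the locally uniform radial unboundedness is precisely what prevents the preimages $g(t,L)$ from escaping to infinity as $t$ varies over $T$.
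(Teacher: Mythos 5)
Your proposal is correct and follows essentially the same route as the paper: verify the hypotheses of \Cref{thm:single_valued} by deriving \eqref{eq:single_valued} from \eqref{eq:lipschitz} and the local integrability of $t\mapsto f(t,\xi_0)$, deduce local injectivity of $F_t$ from \eqref{eq:lower_lipschitz}, and then obtain uniqueness by converting the one-sided bound \eqref{eq:lower_lipschitz} into a genuine Lipschitz bound for $F_t^{-1}$ on compact sets of its range (uniformly in $t$ over compacta, via the locally uniform radial unboundedness), so that the eliminated ODE \eqref{eq:lure_ss'} has a right-hand side that is locally Lipschitz in the state with an integrable Lipschitz rate. The only superficial difference is the reference cited for Caratheodory uniqueness ([cl55] versus [s98]/[w98]), which is immaterial.
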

The remark below contains commentary on the conditions imposed in \Cref{thm:single_valued_uniqueness}.
\begin{remark}\label{rem:uniqueness'}
\begin{enumerate}[label = {\bf (\alph*)}, ref={\rm (\alph*)}, leftmargin = 0ex, itemindent = 4.6ex]
\begin{rm}
\item\label{ls:uniqueness_s_1} In the case that $f$ is linear and time-varying, that is $f(t,\xi) = K(t) \xi$ for measurable $K:\R_+\to\R^{m \times p}$, the conditions of
    \Cref{thm:single_valued_uniqueness} are satisfied if, and only if, $K$ is locally integrable and, for every compact subset $T\subset\R_+$, there exists $\varepsilon > 0$ such that $\min_{\| \xi \|=1} \|(I-DK(t))\xi\|\geq\varepsilon$ for all $t\in T$. In the time-independent case $K(t)\equiv K$, these conditions reduce to the invertibility of $I-DK$ (the latter being being the familiar well-posedness criterion for linear systems with feedthrough under linear output feedback with constant gain matrix, see, for example,
    \cite[Lemma 5.1, p.67]{zd98}).
\item\label{ls:uniqueness_s_2}
If, for all compact sets $T\subset\R_+$ and $K\subset\R^p$, there exist $\gamma_1\in(0,1)$ or $\gamma_2>1$ such that
\[
\|Df(t,\xi)-Df(t,\zeta)\|\leq\gamma_1\|\xi-\zeta\|\quad\forall\,\xi,\zeta\in K,\,\,
\forall\,t\in T,
\]
or
\[
\|Df(t,\xi)-Df(t,\zeta)\|\geq\gamma_2\|\xi-\zeta\|\quad\forall\,\xi,\zeta\in K,\,\,
\forall\,t\in T,
\]
then \eqref{eq:lower_lipschitz} holds with $\varepsilon=1-\gamma_1$ or $\varepsilon=\gamma_2-1$, respectively. Moreover, if the function $\lambda$ in \eqref{eq:lipschitz} is constant and $\lambda\|D\|<1$, then \eqref{eq:lower_lipschitz} is implied by \eqref{eq:lipschitz}.
\item\label{ls:uniqueness_s_3}
We remark that if $f$ is bounded on compact sets, then \Cref{thm:single_valued_uniqueness} remains valid if the Lipschitz condition
\eqref{eq:lipschitz} is replaced by the following, more localized, condition: for all $(\tau,\zeta)\in \R_+\times\R^p$, there exist positive constants $\lambda$, $\theta$ and $\rho$ such that $\|f(t,\xi_1)-f(t,\xi_2)\|\leq\lambda\|\xi_1-\xi_2\|$ for all $t\in(\tau-\theta,\tau+\theta)\cap\R_+$ and all $\xi_1,\xi_2\in\B(\zeta,\rho)$.
\hfill$\Diamond$
\end{rm}
\end{enumerate}
\end{remark}
The following corollary of \Cref{thm:single_valued_uniqueness} considers the
case wherein the maps $Df_t-\gamma I$ satisfy certain monotonicity conditions
for suitable $\gamma\in\R$.
\begin{corollary}\label{cor:dissipativity}
Assume that there exists $\xi_0\in\R^p$ such that the function
$t\mapsto f(t,\xi_0)$ is locally bounded, $f$ is locally Lipschitz in the
sense of \eqref{eq:lipschitz}, and, for every compact set $T\subset\R_+$, there
exist $\gamma_1<1$ or $\gamma_2>1$ such that
\begin{equation}\label{eq:dissipativity_1}
\langle Df(t,\xi)-Df(t,\zeta),\xi-\zeta\rangle\leq\gamma_1\|\xi-\zeta\|^2\quad
\forall\,t\in T,\,\,\forall\,\xi,\zeta\in\R^p
\end{equation}
or
\begin{equation}\label{eq:dissipativity_2}
\langle Df(t,\xi)-Df(t,\zeta),\xi-\zeta\rangle\geq\gamma_2\|\xi-\zeta\|^2\quad
\forall\,t\in T,\,\,\forall\,\xi,\zeta\in\R^p.
\end{equation}
Then statements \ref{ls:single_valued_s_1} and \ref{ls:single_valued_s_2} of \Cref{thm:single_valued} hold
and \eqref{eq:lure_ss} has the uniqueness property.
\end{corollary}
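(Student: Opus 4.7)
The plan is to deduce this corollary directly from \Cref{thm:single_valued_uniqueness} by verifying each of its hypotheses. The local Lipschitz condition \eqref{eq:lipschitz} is assumed outright, and the local boundedness of $t\mapsto f(t,\xi_0)$ obviously implies local integrability. Hence what remains is to establish (i) the locally uniform radial unboundedness of $F_t$, and (ii) the lower Lipschitz bound \eqref{eq:lower_lipschitz}, and both will follow by a single application of the Cauchy--Schwarz inequality to the given monotonicity hypothesis on $Df$.

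Fix a compact set $T\subset\R_+$. In the first case, where $\gamma_1<1$ and \eqref{eq:dissipativity_1} holds, the identity $F(t,\xi)-F(t,\zeta)=(\xi-\zeta)-\bigl(Df(t,\xi)-Df(t,\zeta)\bigr)$ combined with \eqref{eq:dissipativity_1} yields
\[
\langle F(t,\xi)-F(t,\zeta),\xi-\zeta\rangle\geq(1-\gamma_1)\|\xi-\zeta\|^2\quad\forall\,t\in T,\,\forall\,\xi,\zeta\in\R^p,
\]
and Cauchy--Schwarz then gives $\|F(t,\xi)-F(t,\zeta)\|\geq(1-\gamma_1)\|\xi-\zeta\|$, establishing \eqref{eq:lower_lipschitz} with $\varepsilon:=1-\gamma_1>0$ (in fact globally on $\R^p$, not merely on compact $K$). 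The second case, $\gamma_2>1$ and \eqref{eq:dissipativity_2}, is handled symmetrically: the analogous computation produces $\langle F(t,\xi)-F(t,\zeta),\xi-\zeta\rangle\leq(1-\gamma_2)\|\xi-\zeta\|^2$, whence $\|F(t,\xi)-F(t,\zeta)\|\geq(\gamma_2-1)\|\xi-\zeta\|$ and \eqref{eq:lower_lipschitz} holds with $\varepsilon:=\gamma_2-1>0$.

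To obtain locally uniform radial unboundedness of $F_t$ on $T$, specialize the lower bound just derived by taking $\zeta=\xi_0$: in either case one has
\[
\|F(t,\xi)\|\geq\varepsilon\|\xi-\xi_0\|-\|F(t,\xi_0)\|\quad\forall\,t\in T,\,\forall\,\xi\in\R^p.
\]
Since $F(t,\xi_0)=\xi_0-Df(t,\xi_0)$ and $t\mapsto f(t,\xi_0)$ is locally bounded by assumption, $\sup_{t\in T}\|F(t,\xi_0)\|<\infty$, and so $\|F(t,\xi)\|\to\infty$ as $\|\xi\|\to\infty$, uniformly for $t\in T$. This verifies all the hypotheses of \Cref{thm:single_valued_uniqueness}, and the conclusion follows.

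The argument is essentially routine once the monotonicity is converted into a one-sided estimate on $F$; the only mildly delicate point is that both the $\gamma_1<1$ and $\gamma_2>1$ branches must be addressed uniformly, but the sign of the quadratic form $\|\xi-\zeta\|^2-\langle Df(t,\xi)-Df(t,\zeta),\xi-\zeta\rangle$ is in each case bounded away from zero, so Cauchy--Schwarz does the work in both directions without further complication.
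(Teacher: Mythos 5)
Your proof is correct and follows essentially the same route as the paper's: reduce to \Cref{thm:single_valued_uniqueness} by deriving a global lower Lipschitz bound on $F_t$ from the monotonicity hypothesis via Cauchy--Schwarz, and then obtain locally uniform radial unboundedness by setting $\zeta=\xi_0$ and using the local boundedness of $t\mapsto f(t,\xi_0)$. The only difference is that the paper states the lower Lipschitz estimate without spelling out the Cauchy--Schwarz step, which you have (correctly) made explicit.
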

Note that if $Df_t$ is dissipative for every $t\geq 0$, then \eqref{eq:dissipativity_1} holds
with $\gamma_1=0$.

{\it Proof of \Cref{cor:dissipativity}.} Let $T\subset\R_+$ be compact and set $\varepsilon:=1-\gamma_1>0$ or $\varepsilon:=\gamma_2-1>0$ depending on whether \eqref{eq:dissipativity_1} or \eqref{eq:dissipativity_2} holds. Then
\[
\|F(t,\xi)-F(t,\zeta)\|\geq\varepsilon\|\xi-\zeta\|\quad\forall\,t\in T,\,\,\xi,\zeta\in\R^p.
\]
By hypothesis, there exists $b>0$ such that $\|F(t,\xi_0)\|\leq b$ for all $t\in T$, and thus
\[
\|F_t(\xi)\|=\|F(t,\xi)\|\geq\varepsilon\|\xi\|-(b+\varepsilon\|\xi_0\|)\quad\forall\,t\in T,\,\,\xi\in\R^p,
\]
showing that $F_t$ is radially unbounded, uniformly in $t$ for $t\in T$. The claim now follows
from \Cref{thm:single_valued_uniqueness}.
\hfill$\Box$

Under the mild extra assumption that $f$ is bounded on compact sets, an alternative version of \Cref{thm:single_valued_uniqueness} is given in the following corollary.
\begin{corollary}\label{cor:single_valued_uniqueness}
Assume that $f$ is bounded on compact sets and locally Lipschitz in the sense of \eqref{eq:lipschitz}. Furthermore, assume that $F_t$ is radially unbounded, locally uniformly in $t$, and,
for all compact sets $T\subset\R_+$ and $K\subset\R^p$, there exists $b>0$ such that $I-
DM$ is invertible with $\|(I-DM)^{-1}\|\leq b$ for all $M\in\bigcup_{(t,\xi)\in T\times K}
({\rm d}^{\rm c}f_t)(\xi)$.
Then statements \ref{ls:single_valued_s_1} and \ref{ls:single_valued_s_2} of \Cref{thm:single_valued} hold and \eqref{eq:lure_ss} has the uniqueness property.
\end{corollary}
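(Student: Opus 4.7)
The plan is to verify the hypotheses of \Cref{thm:single_valued_uniqueness} and invoke it. Three of those hypotheses are immediate: since $f$ is bounded on compact sets, $t\mapsto f(t,\xi_0)$ is locally bounded (and hence locally integrable) for every $\xi_0\in\R^p$; the locally uniform radial unboundedness of $F_t$ and the Lipschitz bound \eqref{eq:lipschitz} are assumed directly. The remaining task is to derive the lower Lipschitz bound \eqref{eq:lower_lipschitz}.

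As a preliminary, by \Cref{lem:Lg} applied to $Df_t$ together with the standard sum rule for Clarke generalized Jacobians (which applies since $\xi\mapsto\xi$ is smooth with derivative $I$), I obtain $(\rd^{\rm c}F_t)(\xi)=\{I-DM:M\in(\rd^{\rm c}f_t)(\xi)\}$ for all $(t,\xi)$. Taking $K=\{\xi\}$ in the corollary's hypothesis shows that every matrix in $(\rd^{\rm c}F_t)(\xi)$ is invertible at every $\xi\in\R^p$; combined with the assumed radial unboundedness and statement \ref{ls:local_global_s_3} of \Cref{thm:local_global}, this yields that $F_t:\R^p\to\R^p$ is a Lipschitz homeomorphism for every $t\geq 0$.

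For the main step of verifying \eqref{eq:lower_lipschitz} on prescribed compact $T\subset\R_+$ and $K\subset\R^p$, I would derive an upper Lipschitz estimate on $F_t^{-1}$ on a suitable open set, which translates to the required lower Lipschitz estimate on $F_t$. Since $f$ is bounded on $T\times K$ and $F_t(\xi)=\xi-Df(t,\xi)$, there exists $\rho>0$ with $F_t(K)\subset\overline{\B(0,\rho)}$ for all $t\in T$. By locally uniform radial unboundedness of $F_t$, there is $R>0$ such that $F_t^{-1}(\overline{\B(0,\rho+1)})\subset\tilde K:=\overline{\B(0,R)}$ for all $t\in T$. Applying the corollary's hypothesis to $T$ and $\tilde K$ furnishes $b>0$ with $\|(I-DM)^{-1}\|\leq b$ for every $M\in\bigcup_{(t,\xi)\in T\times\tilde K}(\rd^{\rm c}f_t)(\xi)$. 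Now fix $t\in T$ and set $U:=\B(0,\rho+1)$; since $F_t^{-1}(U)\subset\tilde K$, the Clarke Jacobian bound applies at every preimage. By Rademacher's theorem, $F_t^{-1}$ is Fr\'echet differentiable at a.e.\ $\eta\in U$; at such $\eta$, the quantitative form of Clarke's inverse function theorem \cite{c76}, applied at $\xi=F_t^{-1}(\eta)\in\tilde K$, gives that for each $\epsilon>0$ there is a neighbourhood of $\xi$ on which $F_t$ is bi-Lipschitz with local inverse having Lipschitz constant at most $b+\epsilon$; hence $\|(\rd F_t^{-1})(\eta)\|\leq b+\epsilon$, and letting $\epsilon\downarrow 0$ yields $\|(\rd F_t^{-1})(\eta)\|\leq b$. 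Invoking \Cref{lem:derivative_bounds}\ref{ls:derivative_bounds_s_1} with $g=F_t^{-1}$ on $U$ now gives $\|F_t^{-1}(\eta_1)-F_t^{-1}(\eta_2)\|\leq b\|\eta_1-\eta_2\|$ for all $\eta_1,\eta_2\in U$ with $[\eta_1,\eta_2]\subset U$. Taking $\eta_i=F_t(\xi_i)\in\overline{\B(0,\rho)}$ for $\xi_1,\xi_2\in K$ (so $[\eta_1,\eta_2]\subset\overline{\B(0,\rho)}\subset U$), this reads $\|\xi_2-\xi_1\|\leq b\|F_t(\xi_2)-F_t(\xi_1)\|$, which is \eqref{eq:lower_lipschitz} with $\varepsilon=1/b$, uniformly in $t\in T$. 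An application of \Cref{thm:single_valued_uniqueness} then delivers the claimed conclusions.

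The main subtlety I expect is in coupling Clarke's (local, pointwise) inverse function theorem with \Cref{lem:derivative_bounds} (a line-segment integration): this requires the Clarke Jacobian bound $b$ to hold on the preimage $\tilde K$ of a \emph{strict} enlargement $\overline{\B(0,\rho+1)}$ of $F_t(K)$, so that the line segments $[\eta_1,\eta_2]$ between image points lie well inside the open set $U$ on which the pointwise derivative bound is established.
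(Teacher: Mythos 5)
Your proof is correct, and its overall architecture mirrors the paper's: reduce to verifying the lower Lipschitz bound \eqref{eq:lower_lipschitz} on compact $T\times K$, obtain an upper Lipschitz estimate for $F_t^{-1}$ on a ball $U$ containing $F_t(K)$ via an a.e.\ derivative bound and \Cref{lem:derivative_bounds}\ref{ls:derivative_bounds_s_1}, then invert. Where you differ is in how the a.e.\ bound $\|(\rd F_t^{-1})(\eta)\|\leq b$ is obtained. The paper cites Behr--Dolzmann \cite{bd24} for the identity $N_{F_t^{-1}}=F_t(N_{F_t})$, which yields the exact chain-rule formula $(\rd F_t^{-1})(\eta)=\big((\rd F_t)(F_t^{-1}(\eta))\big)^{-1}$ at every $\eta\in\B(0,\rho)\backslash F_t(N_{F_t})$, and hence the derivative bound directly from the hypothesis $\|(I-DM)^{-1}\|\leq b$. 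You instead invoke Rademacher for a.e.\ differentiability of $F_t^{-1}$ and then bound the derivative norm by establishing a local $(b+\epsilon)$-Lipschitz estimate for $F_t^{-1}$ near $\eta$ via a ``quantitative Clarke inverse function theorem''. That estimate is valid, but it is not quite stated in \cite{c76} in that form: Clarke's theorem produces a Lipschitz local inverse without asserting the Lipschitz constant in terms of $\sup\{\|P^{-1}\|:P\in(\rd^{\rm c}F_t)(\xi)\}$. You would need to supply the (short) derivation: upper semicontinuity of $(\rd^{\rm c}F_t)$ and convexity of $(\rd^{\rm c}F_t)(\xi)$ ensure that on a small neighbourhood $V$ of $\xi$, every matrix in ${\rm co}\bigcup_{\zeta\in V}(\rd^{\rm c}F_t)(\zeta)$ lies within $\delta$ of $(\rd^{\rm c}F_t)(\xi)$, hence (for $\delta$ small, by a Neumann-series argument) is invertible with inverse norm $\leq b+\epsilon$; the set-valued mean value inequality then gives $\|F_t(\zeta_1)-F_t(\zeta_2)\|\geq(b+\epsilon)^{-1}\|\zeta_1-\zeta_2\|$ on $V$, from which the local Lipschitz bound on $F_t^{-1}$ follows. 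With this gap filled, your route avoids the specialist reference \cite{bd24} at the cost of a somewhat longer local argument; both approaches then apply \Cref{lem:derivative_bounds}\ref{ls:derivative_bounds_s_1} on the ball and conclude identically with $\varepsilon=1/b$.
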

\begin{proof} The claim follows from \Cref{thm:single_valued_uniqueness}, provided it can be proved that the assumptions of the corollary imply \eqref{eq:lower_lipschitz}. To this end, note that, by statement \ref{ls:local_global_s_3} of \Cref{thm:local_global}, $F_t$ is a Lipschitz homeomorphism for every $t\geq 0$. Rademacher's theorem guarantees that $F_t$ and $F_t^{-1}$ are differentiable almost everywhere. By \cite{bd24}, we have $N_{F_t^{-1}}=F_t(N_{F_t})$. Hence, for every $\xi\in\R^p\backslash F_t(N_{F_t})$, the maps $F_t^{-1}$ and $F_t$ are differentiable
at $\xi$ and $F_t^{-1}(\xi)$, respectively, and we have that
\begin{equation}\label{eq:inverse_formula}
({\rm d} F_t^{-1})(\xi)=\big(({\rm d}F_t)(F_t^{-1}(\xi))\big)^{-1}\quad\forall\,\xi\in\R^p\backslash F_t(N_{F_t}).
\end{equation}
Let $T\subset\R_+$ and $K\subset\R^p$ be compact. As $f$ and hence, $F$ are bounded on compact sets,
there exists $\rho>0$ such that $F_t(K)\subset\B(0,\rho)$ for all $t\in T$.
By the locally uniform radial unboundedness of $F_t$, we have that there exists a compact
set $\Gamma$ such that $F_t^{-1}(\B(0,\rho))\subset\Gamma$ for all $t\in T$. By hypothesis
there exists $b>0$ such that $\|P^{-1}\|\leq b$ for all $P\in\bigcup_{(t,\xi)\in T\times\Gamma}({\rm d}^{\rm c}F_t)(\xi)$. Appealing to \eqref{eq:inverse_formula}, we obtain
\[
\|({\rm d} F_t^{-1})(\xi)\|\leq b\quad\forall\,\xi\in\B(0,\rho)\backslash F_t(N_{F_t}),\,\,\forall\,t\in T.
\]
An application of statement \ref{ls:derivative_bounds_s_1} of \Cref{lem:derivative_bounds} with $U=\B(0,\rho)$ shows that
\[
\|F_t^{-1}(\zeta_1)-F_t^{-1}(\zeta_2)\|\leq b\|\zeta_1-\zeta_2\|\quad\forall\,\zeta_1,\zeta_2\in\B(0,\rho),\,\,\forall\,t\in T.
\]
Let $\xi_1,\xi_2\in K$, $\xi_1\not=\xi_2$, $t\in T$ and set $\zeta_1:=F_t(\xi_1)$, $\zeta_2:=F_t(\xi_2)$. Then $\zeta_1,\zeta_2\in\B(0,\rho)$ and it follows from the
above inequality that
\[
\|F_t(\xi_1)-F_t(\xi_2)\|\geq (1/b)\|\xi_1-\xi_2\|,
\]
establishing \eqref{eq:lower_lipschitz} with $\varepsilon=1/b$.
\end{proof}

We now come to the proof of \Cref{thm:single_valued_uniqueness}.

{\it Proof of \Cref{thm:single_valued_uniqueness}.}
Let $T\subset\R_+$ and $K\subset\R^p$ be compact. Without loss of generality we may assume that
$\xi_0\in K$. Then there exists $\lambda\in L^1_{\rm loc}(\R_+,\R_+)$ such that \eqref{eq:lipschitz} holds, and thus
\[
\|f(t,\xi)\|\leq c\lambda(t)+\|f(t,\xi_0)\|=:\varphi(t)\quad\forall\,\xi\in K,\,\,\forall\,t\geq 0
\]
where $c>0$ is a suitable constant. The function $\varphi$ is in $L^1_{\rm loc}(\R_+,\R_+)$,
and thus, \eqref{eq:single_valued} holds. Furthermore, \eqref{eq:lower_lipschitz} trivially implies that $F_t$ is locally injective for every $t\geq 0$. Hence the assumptions of \Cref{thm:single_valued} are satisfied, and consequently, statements \ref{ls:single_valued_s_1} and \ref{ls:single_valued_s_2} of \Cref{thm:single_valued} hold.

We proceed to prove uniqueness. Let $v\in L^\infty_{\rm loc}(\R_+,\R^{m_{\rm e}})$ and let
the sets $T\subset\R_+$ and $\Gamma\subset\R^n$ be compact.
The uniqueness property follows from an application of \cite[Appendix C.3]{s98} or \cite[Chapter III, \S 10, Supp.\,II]{w98} to system \eqref{eq:lure_ss'}, provided that the function
\[
h(t,z)=f\big(t,g(t,Cz+D_{\rm e}v(t))\big)=f\big(t,F_t^{-1}(Cz+D_{\rm e}v(t))\big)
\]
introduced in \eqref{eq:lure_ss'} satisfies a Lipschitz condition on $\Gamma$ in the sense that there exists $\kappa\in L^1(T,\R_+)$ such that
\begin{equation}\label{eq:h_lipschitz}
\|h(t,z_1)-h(t,z_2)\|\leq\kappa(t)\|z_1-z_2\|\quad\forall\,z_1,z_2\in\Gamma,\,\,\,
\mbox{for a.e. $t\in T$}.
\end{equation}
To show this, let $\Gamma'\subset\R^p$ be compact and such that $Cz+D_{\rm e}v(t)\in\Gamma'$ for all $z\in\Gamma$ and almost every $t\in T$. Invoking uniform radial unboundedness of $F_t$ for $t\in T$, statement \ref{ls:assumptions_s_1} of \Cref{lem:assumptions} below guarantees that there exists $\rho>0$ such that $F_t^{-1}(\Gamma')\subset\overline{\B(0,\rho)}$ for all $t\in T$. By hypothesis, there exists $\varepsilon>0$ such that \eqref{eq:lower_lipschitz} holds with $K:=\overline{\B(0,\rho)}$. It follows that
\[
\|\xi_1-\xi_2\|\geq\varepsilon\|F_t^{-1}(\xi_1)-F_t^{-1}(\xi_2)\|\quad\forall\,\xi_1,\xi_2\in\Gamma',\,\,
\forall\,t\in T,
\]
implying that
\[
\|F_t^{-1}(Cz_1+D_{\rm e}v(t))-F_t^{-1}(Cz_2+D_{\rm e}v(t))\|\leq (1/\varepsilon)\|z_1-z_2\|\quad
\forall\,z_1,z_2\in\Gamma,\,\,\,\mbox{for a.e. $t\in T$}.
\]
Combining this with the Lipschitz property of $f$, we conclude that there exists  $\kappa\in L^1(T,\R_+)$ such that \eqref{eq:h_lipschitz} is satisfied.
\hfill$\Box$

In the following corollary of \Cref{thm:single_valued_uniqueness}, we consider a scenario
in which we have existence for all initial conditions, uniqueness and forward completeness.
\begin{corollary}\label{cor:single_valued_uniqueness'}
Assume that $f$ is locally Lipschitz in the sense of \eqref{eq:lipschitz} and, for every
compact set $T\subset\R_+$, there exist $\xi_0\in\R^p$, $b>0$ and $\delta>0$ such that $t\mapsto f(t,\xi_0)$ is bounded on $T$, $\|M\|\leq b$ for all $M\in\bigcup_{(t,\xi)\in T\times
\R^p}({\rm d}^{\rm c}f_t)(\xi)$ and
\begin{equation}\label{eq:det_condition}
|\det(I-DM)|\geq\delta\quad\forall\,M\in\bigcup_{(t,\xi)\in T\times
\R^p}({\rm d}^{\rm c}f_t)(\xi).
\end{equation}
Then \eqref{eq:lure_ss} has the uniqueness property, $\tilde\sB(S^f,t_0)=\sB(S^f,t_0)$ for all $t_0\geq 0$, and, for all $v\in L^\infty_{\rm loc}(\R_+,\R^{m_{\rm e}})$, $t_0\geq 0$ and $x^0\in\R^n$, there exists a unique $(v,x,y)\in \sB(S^f,t_0)$ such that $x(t_0)=x^0$.
\end{corollary}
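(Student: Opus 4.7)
The plan is to verify the hypotheses of \Cref{cor:single_valued_uniqueness}, which will deliver uniqueness together with statements \ref{ls:single_valued_s_1} and \ref{ls:single_valued_s_2} of \Cref{thm:single_valued}, and then separately to upgrade the blow-up conclusion to forward completeness via a Gronwall argument that exploits the global (rather than compact-set-local) nature of the hypothesis.

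First, I would fix a compact $T\subset\R_+$ and extract uniform constants. From $\|M\|\leq b$ and $|\det(I-DM)|\geq\delta$ for every $M\in\bigcup_{(t,\xi)\in T\times\R^p}({\rm d}^{\rm c}f_t)(\xi)$, the matrices $I-DM$ lie in a bounded subset of $\R^{p\times p}$ with determinants bounded away from zero, so by Cramer's rule there exists $\beta>0$ (depending on $T$) with $\|(I-DM)^{-1}\|\leq\beta$ uniformly in $M$. \Cref{lem:Lg} gives $({\rm d}^{\rm c}F_t)(\xi)=\{I-DM:M\in({\rm d}^{\rm c}f_t)(\xi)\}$, and \Cref{lem:derivative_bounds}\ref{ls:derivative_bounds_s_1} with $U=\R^p$ shows that $f_t$ is globally $b$-Lipschitz for each $t\in T$, whence $f$ is bounded on compact sets. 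Next, I would apply a global inverse function theorem of Hadamard--L\'evy / Plastock type (in the spirit of \cite{bl07,s80,s02,wd72}) to the locally Lipschitz map $F_t$ with its uniformly bounded Clarke generalized inverse: this yields that $F_t$ is a Lipschitz homeomorphism with $\|F_t^{-1}(\eta_1)-F_t^{-1}(\eta_2)\|\leq\beta\|\eta_1-\eta_2\|$ for all $\eta_1,\eta_2\in\R^p$. Radial unboundedness of $F_t$ locally uniformly in $t$ then follows from $\|F_t(\xi)\|\geq(1/\beta)\|\xi-\xi_0\|-\|F_t(\xi_0)\|$ together with the uniform bound $\|F_t(\xi_0)\|\leq\|\xi_0\|+\|D\|c_0$ on $T$, where $c_0$ bounds $t\mapsto\|f(t,\xi_0)\|$ on $T$.

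I expect the global-inversion step to be the main obstacle. A purely local argument based on Clarke's inverse function theorem only gives local bi-Lipschitz estimates that do not patch together via the triangle inequality in the lower-Lipschitz direction, and the naive bound $\|F_t(\xi_1)-F_t(\xi_2)\|\geq(1-\|D\|b)\|\xi_1-\xi_2\|$ is useful only when $\|D\|b<1$, an assumption that is not made here. The uniform invertibility of the Clarke derivative over the whole of $\R^p$ is precisely what activates a Hadamard-type result. With the hypotheses of \Cref{cor:single_valued_uniqueness} now in place, invoking it yields the uniqueness property together with statements \ref{ls:single_valued_s_1} and \ref{ls:single_valued_s_2} of \Cref{thm:single_valued}.

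Finally, I would rule out finite-time blow-up. On any interval $[t_0,\tau]$ with $\tau<\infty$, the algebraic relation $y(t)=F_t^{-1}(Cx(t)+D_{\rm e}v(t))$ combined with the $\beta$-Lipschitz global inverse yields $\|y(t)\|\leq\alpha_1+\beta\|C\|\|x(t)\|+\beta\|D_{\rm e}\|\|v(t)\|$, and the global $b$-Lipschitz bound on $f_t$ then gives $\|Bf(t,y(t))\|\leq c_1+c_2\|x(t)\|+c_3\|v(t)\|$ with constants depending only on $[t_0,\tau]$. Substituting into \eqref{eq:lure_ss_a} and applying Gronwall's lemma keeps $\|x(t)\|$ bounded on $[t_0,\tau]$, contradicting the blow-up in statement \ref{ls:single_valued_s_2} unless $\tau=\infty$. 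Hence every maximal pre-trajectory is a trajectory and $\tilde\sB(S^f,t_0)=\sB(S^f,t_0)$.
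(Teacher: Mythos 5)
Your proposal is correct and follows essentially the same route as the paper's own proof: the crucial step in both is Cramer's rule to get a uniform bound on $(I-DM)^{-1}$, followed by Pourciau's Hadamard-type global inversion theorem for locally Lipschitz maps (which is what underlies \Cref{pro:radial_unboundedness_diff}, cited in the paper's proof), then \Cref{lem:derivative_bounds} together with the Behr--Dolzmann identity $N_{F_t^{-1}}=F_t(N_{F_t})$ to obtain the global Lipschitz bound on $F_t^{-1}$ and hence the lower Lipschitz estimate and locally uniform radial unboundedness. The only cosmetic differences are that you route through \Cref{cor:single_valued_uniqueness} rather than invoking \Cref{thm:single_valued_uniqueness} directly, and you give the forward-completeness argument via an explicit Gronwall estimate rather than citing the linear-growth continuation result the paper uses; both are equivalent in substance.
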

\begin{proof} Let $T$ be an arbitrary compact subset of $\R_+$. It follows from the hypotheses that $\|({\rm d}f_t)(\xi)\|\leq b$ for all $\xi\in\R^p\backslash N_{f_t}$ and all $t\in T$. By statement \ref{ls:derivative_bounds_s_1}
of \Cref{lem:derivative_bounds}, $f_t$ is globally Lipschitz with Lipschitz constant $b$ for every $t\in T$. Hence, there exists $a>0$ such that
\begin{equation}\label{eq:sv_unique}
\|f(t,\xi)\|\leq \|f(t,\xi_0)\|+b(\|\xi_0\|+\|\xi\|)\leq a+ b(\|\xi_0\|+\|\xi\|)\quad\forall\,t\in T,\,\,\forall\,\xi\in\R^p.
\end{equation}
Furthermore, by Cramer's rule,
\[
(I-DM)^{-1}=\frac{1}{\det(I-DM)}{\rm adj}(I-DM)\quad\forall\,M\in \bigcup_{(t,\xi)\in T\times
\R^p}({\rm d}^{\rm c}f_t)(\xi),
\]
where adj denotes the adjugate.
Combining this with the hypotheses on ${\rm d}^{\rm c}f_t$ shows that there exists $c>0$ such that
\begin{equation}\label{eq:sv_unique'}
\|(I-DM)^{-1}\|\leq c\quad\forall\,M\in \bigcup_{(t,\xi)\in T\times
\R^p}({\rm d}^{\rm c}f_t)(\xi).
\end{equation}
It follows from \Cref{pro:radial_unboundedness_diff} below that $F_t=I-Df_t$ is a Lipschitz homeomorphism
for every $t\geq 0$ and $F_t$ is radially unbounded, locally uniformly in $t$. By \cite{bd24}, we have $N_{F_t^{-1}}=F_t(N_{F_t})$, and so,
\[
({\rm d} F_t^{-1})(\xi)=\big(({\rm d}F_t)(F_t^{-1}(\xi))\big)^{-1}\quad\forall\,t\in\R_+,\,\,\forall\,\xi\in\R^p\backslash F_t(N_{F_t}).
\]
Invoking \eqref{eq:sv_unique'}, we conclude that
\[
\|({\rm d} F_t^{-1})(\xi)\|\leq c\quad\forall\,t\in T,\,\,\forall\,\xi\in\R^p\backslash F_t(N_{F_t}).
\]
Another application of statement \ref{ls:derivative_bounds_s_1} of \Cref{lem:derivative_bounds}
yields that $F_t^{-1}$ is globally Lipschitz with Lipschitz constant $c$ for every $t\in T$. Consequently, there exists $\varepsilon>0$ such that
\[
\|F(t,\xi)-F(t,\zeta)\|=\|F_t(\xi)-F_t(\zeta)\|\geq\varepsilon\|\xi-\zeta\|\quad\forall\,t\in T,\,\,\forall\,\xi,\zeta\in\R^p.
\]
We have now shown that the hypotheses of \Cref{thm:single_valued_uniqueness} are satisfied. Therefore, an application
of \Cref{thm:single_valued_uniqueness} yields that
\eqref{eq:lure_ss} has the uniqueness property, and,
for all $v\in L^\infty_{\rm loc}(\R_+,\R^{m_{\rm e}})$, $t_0\geq 0$ and $x^0\in\R^n$, there exists a unique $(v,x,y)\in \tilde\sB(S^f,t_0)$ such that $x(t_0)=x^0$.

It remains to show that $\tilde\sB(S^f,t_0)=\sB(S^f,t_0)$. Let $v\in L^\infty_{\rm loc}(\R_+,\R^{m_{\rm e}})$ be fixed, but arbitrary. As in the proof of \Cref{thm:single_valued}, set $h(t,z):=f\big(t,F_t^{-1}(Cz+D_{\rm e}v(t))\big)$. As $F_t^{-1}$ is globally Lipschitz
with Lipschitz constant $c$ for every $t\in T$, it follows from \eqref{eq:sv_unique} that there exist $c_1, c_2, c_3>0$ such that
\[
\|h(t,z)\|=\|f(t,F_t^{-1}(Cz+Dv(t))\|\leq c_1+c_2\|F_t^{-1}(0)\|+c_3\|z\|\quad\forall\,t\in T,\,\,\forall\,z\in\R^n.
\]
As $F_t$ is radially unbounded, uniformly in $t$ for $t\in T$, it is clear that the function $t\mapsto F_t^{-1}(0)$ is bounded on $T$. Consequently, for every compact set $T\subset\R_+$, there exists $c_4>0$ such that
\[
\|h(t,z)\|\leq c_4(1+\|z\|)\quad\forall\,t\in T,\,\,\forall\,z\in\R^n.
\]
An application of \cite[Proposition 2.1.19]{hp05} to \eqref{eq:lure_ss'} shows
that $\tilde\sB(S^f,t_0)=\sB(S^f,t_0)$, completing the proof.
\end{proof}
We present some examples of nonlinearities $f$ and feedthrough matrices $D$ such that $F$ satisfies
the hypotheses  of Theorems \ref{thm:single_valued} or \ref{thm:single_valued_uniqueness} or any of their corollaries.
\begin{example}\label{ex:single_valued}
\begin{enumerate}[label = {\bf (\alph*)}, ref={\rm (\alph*)}, leftmargin = 0ex, itemindent = 4.6ex]
\begin{rm}
\item\label{ls:single_valued_ex_a}
Consider the case $p=m=2$, $D=I$ and define $f:\R_+\times\R^2\to\R^2$ by
\begin{equation}\label{eq:ex_a}
f(t,\xi):=\xi-g(\|\xi\|)R(\theta(t))\xi,
\end{equation}
where the functions $g:\R_+\to\R_+$ and $\theta:\R_+\to\R$ are continuous and measurable, respectively, and $R$ is the two-dimensional rotation matrix
\[
R(\omega):=\begin{pmatrix}\cos\omega & -\sin\omega\\
\sin\omega & \cos\omega\end{pmatrix},\qquad\omega\in\R.
\]
We assume that the function $\R_+\to\R_+,\,s \mapsto sg(s)$ is injective and radially unbounded.

It is obvious that $f$ satisfies \eqref{eq:single_valued}. As $R^\top(\theta(t))R(\theta(t))=I$ for all $t\geq 0$, we have that
\[
\|F_t(\xi)\|^2=g^2(\|\xi\|)\langle\xi,R^\top(\theta(t))R(\theta(t))\xi\rangle=g^2(\|\xi\|)\|\xi\|^2,\quad
\forall\,t\geq 0,\,\,\forall\,\xi\in\R^2.
\]
It follows from the hypotheses on $g$ that $F_t$ is injective for each $t\geq 0$
and $F_t$ is radially unbounded, uniformly in $t$. Consequently, statements \ref{ls:single_valued_s_1} and \ref{ls:single_valued_s_2} of \Cref{thm:single_valued} apply to \eqref{eq:lure_ss} when $D=I$ and $f$ is given by \eqref{eq:ex_a}.

Furthermore, for $t\in
\theta^{-1}(\pi/2)$, the matrix $R(\theta(t))$ is skew-symmetric, and thus, $\langle\xi,R(\theta(t))\xi\rangle=0$
for all $\xi\in\R^2$. A straightforward calculation shows that
\[
\|f(t,\xi)\|^2=\|\xi\|^2+g^2(\|\xi\|)\|\xi\|^2,\quad\forall\,t\in\theta^{-1}(\pi/2),\,\,\forall\,
\xi\in\R^2.
\]
Therefore, $\|f(t,\xi)\|^2/\|\xi\|^2=1+g^2(\|\xi\|)\geq 1$, whenever $t\in\theta^{-1}(\pi/2)$.
As $\|D\|=1$, we see that the condition in \Cref{cor:single_valued} is not satisfied, and hence the conclusion of this corollary does not apply.
\item\label{ls:single_valued_ex_b}
Let us consider a variant of the previous example, namely the situation wherein
$p=m$ and $f:\R_+\times\R^p\to\R^p$ is given by
\[
f(t,\xi):=g(\|\xi\|)J(t)\xi,
\]
where $g:\R_+\to\R$ is continuously differentiable and $J:\R_+\to {\rm O}(p,\R)$
is measurable, with ${\rm O}(p,\R)$ denoting the group of orthogonal real matrices of format $p\times p$. It is clear that $f$ satisfies \eqref{eq:lipschitz} as
\[
\|f(t,\xi_1)-f(t,\xi_2)\|\leq\big\| g(\|\xi_1\|) - g(\|\xi_2\|)\big\|\|J(t)\xi_1 \| + g(\|\xi_2\|)\big \|J(t)\xi_1-J(t)\xi_2\big\|\quad\forall\,\xi_1,\xi_2\in\R^p.
\]
Furthermore, a routine calculation shows that
\[ (\rd f_t)(\xi) = \left\{ \begin{aligned}
&g(0)J(t),   &   \xi  &=0\\
&\frac{g'(\|\xi\|)}{\|\xi\|}(J(t)\xi)\xi^\top + g(\|\xi\|)J(t), &  \xi&\neq 0\,.
\end{aligned} \right.
\]
Let us now focus on the specific case wherein $g(s) = 1/\sqrt{1+s^2}$. It is clear that
$F_t$ is radially unbounded, uniformly in $t$. Furthermore, as $g(0) = 1$ and $g'(s) = -s(1+s^2)^{-\tfrac{3}{2}}$, it follows that
\[
(\rd f_t)(\xi) = \frac{1}{\sqrt{1+\|\xi\|^2}}J(t)(I - P(\xi)) \quad \forall\,\xi\in\R^p,\quad
\mbox{where $P(\xi):=(1+\|\xi\|^2)^{-1}\xi\xi^\top$.}
\]
As $P(\xi)$ is a symmetric positive semi-definite matrix with eigenvalues in the interval $[0,1]$,
it follows that $\|I - P(\xi)\|\leq 1$ for all $\xi\in\R^p$, and thus,
\[
\|D(\rd f_t)(\xi)\|\leq\|D\|\quad\forall\,\xi\in\R^p,\,\,\forall\,t\geq 0.
\]
Consequently, if $\|D\|<1$, then there exists $\varepsilon>0$ such that $\|F(t,\xi)-F(t,\zeta)\|\geq\varepsilon\|\xi-\zeta\|$ for all $t\geq 0$ and $\xi,\zeta\in\R^p$,
in particular \eqref{eq:lower_lipschitz} holds. We conclude that if $g(s) = 1/\sqrt{1+s^2}$
and $\|D\|<1$, then \Cref{thm:single_valued_uniqueness} and Corollaries \ref{cor:single_valued_uniqueness} and \ref{cor:single_valued_uniqueness'} are applicable.
\item\label{ls:single_valued_ex_c}
Consider the case $p = m$, $D=I$ with nonlinearity
\begin{equation}\label{eq:single_valued_exa}
f:\R_+\times \R^p\to\R^p,\,\,(t,\xi)\mapsto \frac{h(t)}{1+\|\xi\|}\xi,
\end{equation}
where $h:\R_+\to\R$ is measurable, locally bounded and $h(t)\in(-\infty,1]$ for all $t\geq 0$.
The map $F$ is given by
\[
F(t,\xi) =\xi-Df(t,\xi)=\xi-f(t,\xi) =\frac{1-h(t)+\|\xi\|}{1+\|\xi\|}\xi
\quad\forall\,\xi\in\R^p,\,\,\forall\,t\geq 0.
\]
Obviously, $F_t$ is radially unbounded, locally uniformly in $t$. It is also readily checked that $F_t$ is injective for every $t\geq 0$. As $f$ also satisfies \eqref{eq:single_valued} and is
bounded, locally uniformly in $t$, it follows that \Cref{thm:single_valued} is applicable to \eqref{eq:lure_ss} with $D=I$ and $f$ given by \eqref{eq:single_valued_exa}. We remark that \Cref{cor:single_valued_uniqueness'} is also applicable, provided that $h(t)$ is bounded away from $1$. As $({\rm d} F_t)(0)=(1-h(t))I$ for all $t\geq 0$, it follows that \eqref{eq:det_condition} does not hold if $h(t)=1$, in which case \Cref{cor:single_valued_uniqueness'} is not applicable.
\hfill $\Diamond$
\end{rm}
\end{enumerate}
\end{example}
\subsection{$F_t$ failing to be invertible for all $t$ in a set of positive measure}\label{subsec:not_invertible}
In the following, we present an approach to well-posedness of \eqref{eq:lure_ss} which is based on set-valued analysis and does not require $F_t$ to be a local homeomorphism. Whilst the case of $F_t$ failing to be invertible on a set of zero measure is covered by Remark \ref{rem:a.e.}, here we consider the situation wherein invertibility of $F_t$ may fail for all $t\geq 0$ belonging to a set of positive measure. In essence, the approach involves recasting \eqref{eq:lure_ss} as a differential inclusion combined with an application of suitable selection theorems, including Filippov's selection theorem. The symbol $F_t^{-1}$ will now denote the inverse image of $F_t$, more precisely, $F_t^{-1}$ is the set-valued function mapping each point $\xi\in\R^p$ to its fibre $F_t^{-1}(\xi)$.

We introduce the following assumptions.
\begin{enumerate}[label = {\bfseries (A\arabic*)}, ref = {\rm (A\arabic*)}]
\item \label{ls:a1} For every $t\geq 0$, the set~$F_t^{-1}(\xi)=\{\zeta\in\R^p:F_t(\zeta)=\xi\}$ is nonempty for all $\xi\in{\rm im}\,(C,D_{\rm e})$.
\item \label{ls:a2} $F_t$ is radially unbounded, locally uniformly in $t$.
\item \label{ls:a3} For every~$t\geq 0$ and every~$\xi\in {\rm im}\,(C,D_{\rm e})$, the set~$f(t,F_t^{-1}(\xi))=\{f(t,\zeta):\zeta\in F_t^{-1}(\xi)\}$ is convex.

\end{enumerate}
A discussion of these assumptions and examples of functions satisfying \ref{ls:a1}--\ref{ls:a3} appear after the statement of the following theorem, the main result of this section. The theorem shows in particular that, if \ref{ls:a1}--\ref{ls:a3} hold, then \eqref{eq:lure_ss} is well posed in the sense that, for every $v\in L^\infty_{\rm loc}(\R_+,\R^{m_{\rm e}})$, every $t_0\geq 0$ and every $x^0\in\R^n$, there exists $(v,x,y)\in\tilde\sB(S^f,t_0)$ such that $x(t_0)=x^0$. The proof of the theorem can be found at the end of the section.
\begin{theorem}\label{thm:inclusion_wp}
Assume
that \ref{ls:a1}--\ref{ls:a3} are satisfied and, for all compact sets $K\subset\R^p$, there exists $\varphi\in L^1_{\rm loc}(\R_+,\R_+)$ such that \eqref{eq:single_valued} holds.
Let $v\in L^\infty_{\rm loc}(\R_+,\R^{m_{\rm e}})$, $t_0\geq 0$ and $x^0\in\R^n$. The following statements hold.
\begin{enumerate}[label = {\bf (\arabic*)}, ref={\rm (\arabic*)}, leftmargin = 0ex, itemindent = 4.6ex]
\item \label{ls:inclusion_wp_s_1} There exists $(v,x,y)\in\tilde\sB(S^f,t_0)$ such that $x(t_0)=x^0$.
\item \label{ls:inclusion_wp_s_2}
If $(v,x,y)\in\tilde\sB(S^f,t_0)$ has a bounded maximal interval of definition $[t_0,\tau)$,
where $t_0<\tau<\infty$, then $\int_{t_0}^\tau\|Bf(t,y(t))\|{\rm d}t=\infty$, $y\not\in L^\infty([0,\tau),\R^p)$ and $\|x(t)\|\to\infty$ as $t\uparrow\tau$. In particular,
system \eqref{eq:lure_ss} has the blow-up property.
\end{enumerate}
\end{theorem}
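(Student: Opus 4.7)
The plan is to recast \eqref{eq:lure_ss} as a differential inclusion in the state $x$ alone, apply a standard existence theorem for Caratheodory differential inclusions to produce an absolutely continuous local state trajectory, and recover the output $y$ via Filippov's selection theorem. Accordingly, define the set-valued map
\[
H(t,z) := Az + B\,f\bigl(t,F_t^{-1}(Cz + D_{\rm e}v(t))\bigr) + B_{\rm e}v(t),
\]
where $f(t,F_t^{-1}(\xi))$ denotes the image of the fibre $F_t^{-1}(\xi)$ under $f(t,\cdot)$. I verify properties of $H$: assumption \ref{ls:a1} gives nonempty values (since $Cz+D_{\rm e}v(t)\in{\rm im}(C,D_{\rm e})$); \ref{ls:a3} gives convex values; \ref{ls:a2} together with continuity of $F_t$ and $f(t,\cdot)$ yields compact values and upper semicontinuity of $H(t,\cdot)$ via the standard subsequence argument (the fibres of any bounded set of $F_t$ are bounded uniformly, so one can pass to a convergent subsequence and use continuity); $H(\cdot,z)$ is measurable as $F$ and $f$ are Caratheodory; and a locally integrable majorant of $\|H(\cdot,\cdot)\|$ on bounded sets follows by combining \ref{ls:a2} with \eqref{eq:single_valued}, since for $z$ in any bounded set the preimage $F_t^{-1}(Cz+D_{\rm e}v(t))$ lies in a fixed compact set, locally uniformly in $t$.

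With these properties in hand, a standard existence theorem for Caratheodory differential inclusions (see, for instance, \cite{af09,v04}) supplies an absolutely continuous solution $x$ of $\dot x(t)\in H(t,x(t))$ on some interval $[t_0,t_0+\delta)$ with $x(t_0)=x^0$. To recover the output, I apply Filippov's selection theorem: the set-valued map $\Phi(t):=F_t^{-1}(Cx(t)+D_{\rm e}v(t))$ is measurable with nonempty closed values, $g(t,\zeta):=Ax(t)+Bf(t,\zeta)+B_{\rm e}v(t)$ is Caratheodory, and by construction $\dot x(t)\in g(t,\Phi(t))$ a.e.; Filippov then delivers a measurable selection $y(t)\in\Phi(t)$ satisfying $\dot x(t)=Ax(t)+Bf(t,y(t))+B_{\rm e}v(t)$ a.e., and the definition of $F_t$ automatically yields $y(t)=Cx(t)+Df(t,y(t))+D_{\rm e}v(t)$. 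The bounds from \ref{ls:a2} and \eqref{eq:single_valued} place $y$ in $L^\infty_{\rm loc}$ and $f\circ y$ in $L^1_{\rm loc}$, so $(v,x,y)$ is a pre-trajectory; \Cref{pro:max_def} then produces a maximally defined extension, establishing \ref{ls:inclusion_wp_s_1}.

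For \ref{ls:inclusion_wp_s_2}: since \ref{ls:inclusion_wp_s_1} supplies the local-existence hypothesis of \Cref{pro:blow_up}, the latter gives $\int_{t_0}^\tau(\|y(s)\|+\|Bf(s,y(s))\|)\,ds=\infty$. To strengthen this to $\int_{t_0}^\tau\|Bf(s,y(s))\|\,ds=\infty$, suppose this integral were finite; then the variation-of-parameters formula applied to \eqref{eq:lure_ss_a} yields a finite limit $\lim_{t\uparrow\tau}x(t)$, and local existence at $(\tau,\lim x)$ furnished by \ref{ls:inclusion_wp_s_1} extends $(v,x,y)$ past $\tau$, contradicting maximality. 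The claim $y\notin L^\infty([t_0,\tau),\R^p)$ is then immediate, since $y\in L^\infty$ together with \eqref{eq:single_valued} would force $f\circ y\in L^1$, already ruled out. Finally, for the pointwise limit $\|x(t)\|\to\infty$: the same extension argument forces $\|x\|$ to be unbounded on $[t_0,\tau)$, so $\limsup_{t\uparrow\tau}\|x(t)\|=\infty$; if additionally $\liminf_{t\uparrow\tau}\|x(t)\|<\infty$, then an intermediate-value argument yields sequences $s_k<t_k\uparrow\tau$ on which $\|x(s_k)\|=M$, $\|x(t_k)\|=2M$ and $x$ stays in a fixed bounded set, so \ref{ls:a2} and \eqref{eq:single_valued} give $\|\dot x(t)\|\leq C+\|B\|\varphi(t)$ on $[s_k,t_k]$ for a constant $C$ and $\varphi\in L^1([t_0,\tau],\R_+)$; absolute continuity of the Lebesgue integral forces $\|x(t_k)-x(s_k)\|\to 0$, contradicting $\|x(t_k)\|-\|x(s_k)\|=M>0$.

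The main obstacles I anticipate are: first, the regularity verification for $H$, most notably the measurability hypotheses that Filippov's selection theorem requires for $\Phi$, which blends the Caratheodory structure of $F$ with selection theorems from \cite{af09,d92,v00}; and second, the refinement from mere blow-up to the pointwise limit $\|x(t)\|\to\infty$ in \ref{ls:inclusion_wp_s_2}, which requires the oscillation argument described above rather than a direct appeal to \Cref{pro:blow_up}.
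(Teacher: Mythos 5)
Your argument mirrors the paper's proof almost step for step: recast the state equation as the differential inclusion $\dot x(t)\in\Phi(t,x(t))$ with $\Phi(t,z)=Az+Bf\bigl(t,F_t^{-1}(Cz+D_{\rm e}v(t))\bigr)+B_{\rm e}v(t)$, verify nonempty compact convex values and upper semicontinuity in $z$ (via \ref{ls:a1}, \ref{ls:a3} and \ref{ls:a2} together with \Cref{lem:assumptions}), measurability in $t$ (via \Cref{lem:measurability} and a measurable-selection theorem), and a locally integrable majorant (via \ref{ls:a2} and \eqref{eq:single_valued}); then apply a Caratheodory existence theorem for differential inclusions; then recover $y$ via Filippov's selection theorem; then invoke \Cref{pro:blow_up} for the blow-up. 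This is precisely the paper's route. The only presentational difference for statement~\ref{ls:inclusion_wp_s_1} is that you obtain a local solution and then appeal to \Cref{pro:max_def} to extend it maximally, whereas the paper cites \cite[Corollary 5.2]{d92} to get a maximal solution of the inclusion directly, together with $\limsup_{t\uparrow\tau}\|x(t)\|=\infty$ when $\tau<\infty$. Either ordering is fine.

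For statement~\ref{ls:inclusion_wp_s_2} you reverse the order of deductions relative to the paper, and this introduces a small gap you should close. The paper establishes, in order: $y\notin L^\infty$ (from \Cref{pro:blow_up} and \eqref{eq:single_valued}); then $\limsup_{t\uparrow\tau}\|x(t)\|=\infty$ (from $y\notin L^\infty$ and \ref{ls:a2}); then $\|x(t)\|\to\infty$ (the oscillation argument); and only then $\int_{t_0}^\tau\|Bf\|\,\rd t=\infty$, which drops out of $\|x(t)\|\to\infty$ and the variation-of-parameters formula. You instead want to get $\int_{t_0}^\tau\|Bf\|\,\rd t=\infty$ first, by assuming the contrary, producing a finite $\lim_{t\uparrow\tau}x(t)$, and extending past $\tau$. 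But to extend you must produce a genuine pre-trajectory on a longer interval, and that requires the glued output $\tilde y$ to satisfy $\tilde y\in L^1_{\rm loc}$ and $f\circ\tilde y\in L^1_{\rm loc}$ across $\tau$, which does \emph{not} follow merely from $\int_{t_0}^\tau\|Bf\|\,\rd t<\infty$ when $B$ has nontrivial kernel. You need the extra chain: $\lim_{t\uparrow\tau}x(t)$ finite $\Rightarrow$ $x$ bounded on $[t_0,\tau)$ $\Rightarrow$ $Cx+D_{\rm e}v$ essentially bounded $\Rightarrow$ (by \ref{ls:a2} and \Cref{lem:assumptions}\,\ref{ls:assumptions_s_1}) $y$ essentially bounded $\Rightarrow$ (by \eqref{eq:single_valued}) $\int_{t_0}^\tau\bigl(\|y\|+\|f(t,y(t))\|\bigr)\rd t<\infty$, after which the gluing of \Cref{pro:blow_up}'s proof goes through. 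The same chain is what is really hiding behind your phrase that ``the same extension argument forces $\|x\|$ to be unbounded''; it is worth spelling out, though the paper's direct derivation from $y\notin L^\infty$ and \ref{ls:a2} is shorter. Your oscillation argument for upgrading $\limsup$ to a genuine limit is correct and matches the paper's, modulo noting that $s_k\to\tau$.
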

Note that if $f$ satisfies a suitable Lipschitz condition, then it follows from
\Cref{thm:single_valued_uniqueness} that the uniqueness property holds provided
that $F$ satisfies \eqref{eq:lower_lipschitz}.
\begin{remark}\label{rem:a.e.'}
\begin{rm}
Consider the following assumptions which are slightly weaker than \ref{ls:a1}-\ref{ls:a3}.
\begin{enumerate}[label = {\bfseries (B\arabic*)}, ref = {\rm (B\arabic*)}]
\item \label{ls:a1'} For almost every $t\geq 0$, the set~$F_t^{-1}(\xi)=\{\zeta\in\R^p:F_t(\zeta)=\xi\}$ is nonempty for all $\xi\in{\rm im}\,(C,D_{\rm e})$.
\item \label{ls:a2'} $F_t$ is radially unbounded, locally essentially uniformly in $t$, in the sense of \Cref{rem:a.e.}.
\item \label{ls:a3'} For almost every~$t\geq 0$ and every~$\xi\in {\rm im}\,(C,D_{\rm e})$, the set~$f(t,F_t^{-1}(\xi))=\{f(t,\zeta):\zeta\in F_t^{-1}(\xi)\}$ is convex.
\end{enumerate}
We claim that \Cref{thm:inclusion_wp} remains valid if assumptions \ref{ls:a1}-\ref{ls:a3} are replaced by \ref{ls:a1'}-\ref{ls:a3'}. Indeed, if \ref{ls:a1'}-\ref{ls:a3'} hold, then
there exists a null set $E\subset\R_+$ such that the  Caratheodory functions $\hat f$ and $\hat F$ defined in \Cref{rem:a.e.} satisfy \ref{ls:a1}-\ref{ls:a3}. Consequently, \Cref{thm:inclusion_wp} applies to system \eqref{eq:lure_ss} with $f$ replaced by $\hat f$
and the claim follows from the identity  $\tilde\sB(S^{\hat f},t_0)=
\tilde\sB(S^f,t_0)$.
\hfill$\Diamond$
\end{rm}
\end{remark}

The following lemma identifies certain properties of $F_t$ which are related to \ref{ls:a1}-\ref{ls:a3} in the sense that they imply or are implied by some of the
assumptions \ref{ls:a1}-\ref{ls:a3}.
\begin{lemma}\label{lem:assumptions}
The following statements hold.
\begin{enumerate}[label = {\bf (\arabic*)}, ref={\rm (\arabic*)}, leftmargin = 0ex, itemindent = 4.6ex]
\item\label{ls:assumptions_s_1}
Assumption \ref{ls:a2} holds if, and only if, for all compact sets $T\subset\R_+$ and $K\subset\R^p$, there exists $\rho\geq 0$ such that $F_t^{-1}(K)\subset\B(0,\rho)$ for all
$t\in T$.
\item\label{ls:assumptions_s_2}
If \ref{ls:a1} and \ref{ls:a2} are satisfied, then the set $F_t^{-1}(\xi)$ is non-empty and compact for all $t\geq 0$ and all $\xi\in {\rm im}\,(C,D_{\rm e})$ and the set-valued map $\xi\mapsto F_t^{-1}(\xi)$ is upper semicontinuous on ${\rm im}\,(C,D_{\rm e})$.
\item\label{ls:assumptions_s_3}
If, for every $t\geq 0$, the map $F_t:\R^p\to\R^p$ is locally injective and
\ref{ls:a2} is satisfied, then $F_t$ is a homeomorphism and \ref{ls:a1} and \ref{ls:a3} hold.
\item\label{ls:assumptions_s_4} If $f$ does not depend on $t$ and $F:\R^p\to\R^p$ is a homeomorphism, then \ref{ls:a1}-\ref{ls:a3} are satisfied.
\end{enumerate}
\end{lemma}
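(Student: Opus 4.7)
The plan is to deduce statements (2)--(4) from statement (1) combined with the global inversion result \ref{ls:local_global_s_1} of \Cref{thm:local_global}.

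For statement (1), I would prove the equivalence as a direct reformulation of radial unboundedness in terms of preimages of compact sets. For the forward implication, given compact $T\subset\R_+$ and compact $K\subset\R^p$, choose $\rho_0>0$ with $K\subset\overline{\B(0,\rho_0)}$; assumption \ref{ls:a2} then supplies $\sigma\geq 0$ such that $\|F_t(\xi)\|>\rho_0$ for all $t\in T$ and all $\xi$ with $\|\xi\|\geq\sigma$, whence $F_t^{-1}(K)\subset\B(0,\sigma)$. For the converse implication, apply the stated bound to $K=\overline{\B(0,\rho_0)}$ to extract the constant required in the definition of \ref{ls:a2}.

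For statement (2), nonemptiness of $F_t^{-1}(\xi)$ for $\xi\in{\rm im}\,(C,D_{\rm e})$ is precisely \ref{ls:a1}, while compactness follows from statement (1) (applied with $T=\{t\}$ and $K=\{\xi\}$, which yields boundedness) together with continuity of $F_t$ (which gives closedness of the preimage of a point). For upper semicontinuity at a point $\xi\in{\rm im}\,(C,D_{\rm e})$ I would proceed sequentially: take a sequence $\xi_n\to\xi$ in ${\rm im}\,(C,D_{\rm e})$ and a selection $\zeta_n\in F_t^{-1}(\xi_n)$, and apply statement (1) to the compact set $K=\{\xi_n:n\in\N\}\cup\{\xi\}$ with $T=\{t\}$ to conclude that $(\zeta_n)$ lies in a fixed compact subset of $\R^p$. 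Any subsequential limit $\zeta$ then satisfies $F_t(\zeta)=\xi$ by continuity of $F_t$, and hence belongs to $F_t^{-1}(\xi)$. Combined with compactness of the fibres $F_t^{-1}(\xi)$ and the local boundedness of the preimage map (again furnished by (1)), this sequential property is equivalent to upper semicontinuity in the sense defined in \Cref{sec:preliminaries}.

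Statements (3) and (4) are short consequences of \ref{ls:local_global_s_1} of \Cref{thm:local_global}. For (3), local injectivity of $F_t$ together with the radial unboundedness contained in \ref{ls:a2} yields that $F_t$ is a homeomorphism for every $t\geq 0$; in particular $F_t$ is a bijection, so $F_t^{-1}(\xi)$ is a singleton for every $\xi\in\R^p$, making \ref{ls:a1} immediate, and the image $f(t,F_t^{-1}(\xi))$ is likewise a singleton, hence convex, so \ref{ls:a3} also holds. For (4), if $f$ is independent of $t$ then $F_t=F$ for all $t\geq 0$; radial unboundedness of $F$ (guaranteed by the ``only if'' direction of \ref{ls:local_global_s_1} of \Cref{thm:local_global}) is automatically locally uniform in $t$, verifying \ref{ls:a2}, and \ref{ls:a1} and \ref{ls:a3} follow exactly as in (3). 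The only non-routine step is the upper semicontinuity assertion in (2), for which one must invoke the equivalence between the topological definition of upper semicontinuity and the sequential criterion for compact-valued, locally bounded multifunctions with closed graph; restriction of the domain to the linear subspace ${\rm im}\,(C,D_{\rm e})$ causes no additional difficulty.
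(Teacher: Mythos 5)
Your proof is correct and, for statements (1), (3), and (4), follows essentially the same route as the paper: the preimage reformulation of radial unboundedness in (1), and \Cref{thm:local_global}\ref{ls:local_global_s_1} to pass between homeomorphism, local injectivity, radial unboundedness and singleton fibres in (3) and (4).

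The one place where you diverge is the upper semicontinuity claim in statement (2). The paper delegates this to an appendix result (\Cref{lem:usc}), which argues via the intermediary notion of a \emph{closed map}: a continuous radially unbounded map is closed, and a closed map has an upper semicontinuous fibre multifunction. You instead give a direct sequential argument: a sequence $\xi_n\to\xi$ with selections $\zeta_n\in F_t^{-1}(\xi_n)$ is trapped in a fixed compact set by statement (1), any subsequential limit lies in $F_t^{-1}(\xi)$ by continuity of $F_t$, and this closed-graph-plus-local-boundedness property, together with the compactness of the fibres, yields upper semicontinuity by the standard sequential characterization. Both arguments rest on the same ingredients (compactness of $F_t^{-1}(K)$ and continuity of $F_t$), so this is a small variation in bookkeeping rather than a different idea; your version is slightly more self-contained, the paper's more modular. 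One minor remark: in (2) you invoke the equivalence of the sequential and topological definitions of upper semicontinuity for compact-valued, locally bounded multifunctions with closed graph as a known fact; this is indeed standard (and is in effect what \Cref{lem:usc} encapsulates), but if you wanted a fully self-contained argument you would spell out the contradiction argument (if $G(\xi_n)\not\subset U$ for infinitely many $n$, pick $\zeta_n\in G(\xi_n)\setminus U$, extract a convergent subsequence, and observe that its limit lies in $G(\xi)\setminus U$, contradicting $G(\xi)\subset U$).
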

\begin{proof}
\ref{ls:assumptions_s_1} Assume that \ref{ls:a2} holds.
Let $T\subset\R_+$ and $K\subset\R^p$ be compact and let $\sigma>\sup_{\xi\in K}\|\xi\|$.
There exists $\rho>0$ such that $\|F_t(\xi)\|\geq\sigma$ for all $t\in T$ and all $\xi\in\R^p$
with $\|\xi\|\geq\rho$. Let $t\in T$ and $\xi\in F_t^{-1}(K)$, then $F_t(\xi)\in K$, and so
$\|F_t(\xi)\|<\sigma$, whence $\|\xi\|<\rho$. It follows that $F_t^{-1}(K)\subset\B(0,\rho)$
for all $t\in T$.

To prove the converse, let $T\subset\R_+$ be compact and $\sigma>0$. By hypothesis, there
exists $\rho>0$ such that $F_t^{-1}(\overline{\B(0,\sigma)})\subset\B(0,\rho)$. Consequently,
for $t\in T$ and $\xi\in\R^p$ with $\|\xi\|\geq\rho$, we have that $\xi\not\in F_t^{-1}(\overline{\B(0,\sigma)})$, implying that $\|F_t(\xi)\|>\sigma$. Hence, $F_t$ is
radially unbounded, locally uniformly in $t$, showing that \ref{ls:a2} is satisfied.

\ref{ls:assumptions_s_2} By \ref{ls:a1}, $F_t^{-1}(\xi)$ is non-empty for all $\xi\in
{\rm im}\,(C,D_{\rm e})$ and compactness follows from \ref{ls:a2}, statement \ref{ls:assumptions_s_1} and the continuity
of $F_t$. The upper semicontinuity of $F_t^{-1}$ is a consequence of \Cref{lem:usc}
in the Appendix.

\ref{ls:assumptions_s_3} This is an immediate consequence of statement \ref{ls:local_global_s_1}
of \Cref{thm:local_global}.

\ref{ls:assumptions_s_4} As $F^{-1}$ is a continuous map, $F^{-1}(K)$ is compact for every compact set $K\subset\R^p$, and it follows from statement \ref{ls:assumptions_s_1} that \ref{ls:a2} holds. Assumptions \ref{ls:a1} and \ref{ls:a3} are trivially satisfied.
\end{proof}
In the remark below we provide some further commentary on assumptions \ref{ls:a1}--\ref{ls:a3}.
\begin{remark}\label{rem:assumptions}
\begin{rm}
\begin{enumerate}[label = {\bf (\alph*)}, ref={\rm (\alph*)}, leftmargin = 0ex, itemindent = 4.6ex]
\item\label{ls:rem_assumptions_s_1}
In the definition of trajectories or pre-trajectories $(v,x,y)$, the output $y$ is required
to be locally integrable on its interval of definition. However, if \ref{ls:a2} holds, then, as
$F_t(y(t))=Cx(t)+D_{\rm e}v(t)$ and $Cx+D_{\rm e}v$ is locally essentially bounded, it follows
that the function $y$ is not only locally integrable, but locally essentially bounded on its interval of definition.

\item\label{ls:rem_assumptions_s_2}
By \Cref{lem:usc} in the Appendix, assumption \ref{ls:a2} implies the upper semicontinuity of $F_t^{-1}$ for every $t\geq 0$. Furthermore, if $f$ does not depend on $t$, all fibres of $F$ are bounded and ${\rm im}\,F$ is closed, then upper semicontinuity of $F^{-1}$
implies \ref{ls:a2}.
\item \label{ls:rem_assumptions_s_3}
A sufficient condition for \ref{ls:a2} to hold is
\[
\limsup_{\|\xi\|\to\infty}\frac{\|Df(t,\xi)\|}{\|\xi\|}<1\quad\forall\,t\geq 0.
\]
The above inequality certainly holds if $f$ is of sublinear growth, that is, if $\lim_{\|\xi\|\to\infty}\|f(t,\xi)\|/\|\xi\|=0$ for all $t\geq 0$.
\item \label{ls:rem_assumptions_s_4}
If $F_t^{-1}(\xi)$ is a singleton for all $t\geq 0$ and all $\xi \in {\rm im}\,(C,D_{\rm e})$, then \ref{ls:a3} is trivially satisfied. Furthermore, as
$Df(t,F_t^{-1}(\xi))=F_t^{-1}(\xi)-\xi$, we see that \ref{ls:a3} holds if
$D$ is left invertible and $F_t^{-1}(\xi)$ is convex for all $t\geq 0$ and all $\xi
\in {\rm im}\,(C,D_{\rm e})$.
\hfill$\Diamond$
\end{enumerate}
\end{rm}
\end{remark}
Next we present examples of nonlinearities $f$ for which \ref{ls:a1}-\ref{ls:a3} are satisfied.
\begin{example}
\begin{rm}
{\bf (a)}
Let $m=p=1$, $D=1$ and let $d:\R_+\to[0,1]$ be measurable. Consider the following saturation nonlinearity with a deadzone of time-dependent width $2d$:
\[
f(t,\xi):=\left\{ \begin{aligned} & 0, & & |\xi|\leq d(t),\\
& \xi-({\rm sign}\,\xi)d(t), & & d(t)<|\xi|\leq 1+d(t),\\
& {\rm sign}\,\xi, & & |\xi|>1+d(t).
\end{aligned} \right.
\]
The function $F(t,\xi)=\xi-f(t,\xi)$ is then given by
\[
F(t,\xi)=\left\{ \begin{aligned} & \xi, & & |\xi|\leq d(t),\\
& ({\rm sign}\,\xi)d(t), & & d(t)<|\xi|\leq 1+d(t),\\
& \xi-{\rm sign}\,\xi, & & |\xi|>1+d(t).
\end{aligned} \right.
\]
It is clear that $F_t$ is radially unbounded, uniformly in $t$. Moreover, for all $t\geq 0$ and all $\xi\in\R$ such that $|\xi|\not=d(t)$, the set $F_t^{-1}(\xi)$ is a singleton, $F_t^{-1}(d(t))=[d(t),1+d(t)]$ and $F_t^{-1}(-d(t))=[-(1+d(t)),-d(t)]$. Finally, as $f\big(t,F_t^{-1}(d(t))\big)=[0,1]$ and $f\big(t,F_t^{-1}(-d(t))\big)=[-1,0]$, we conclude
that assumptions \ref{ls:a1}-\ref{ls:a3} are satisfied.

{\bf (b)} Let $m=p$, $D=(1/2)I$ and consider the time-independent nonlinearity
\[
f(\xi):=\left\{ \begin{aligned} & \xi, & & \|\xi\|\leq 1,\\
& (2-1/\|\xi\|)\xi, & & 1<\|\xi\|\leq 2,\\
& (1+1/\|\xi\|)\xi, & & 2<\|\xi\|<\infty.\end{aligned} \right.
\]
Then, for all $\xi\in\R^p$,
\[
F(\xi)=\xi-Df(\xi):=\left\{ \begin{aligned} & \xi/2, & & \|\xi\|<1,\\
& \xi/(2\|\xi\|), & & 1\leq\|\xi\|\leq 2,\\
& (1-1/\|\xi\|)\xi/2, & & 2<\|\xi\|<\infty,\end{aligned} \right.
\]
and so,
\[
F^{-1}(\xi)=\left\{ \begin{aligned} & \{2\xi\}, & & \|\xi\|<1/2,\\
& \{r\xi:r\in[2,4]\}, & & \|\xi\|=1/2,\\
& \{(2+1/\|\xi\|)\xi\}, & & \|\xi\|>1/2.
\end{aligned} \right.
\]
It is obvious that \ref{ls:a1} and \ref{ls:a2} are satisfied. Since
the set
\[
f(F^{-1}(\xi))=\left\{ \begin{aligned} & \{2\xi\}, & & \|\xi\|<1/2,\\
& \{r\xi:r\in[2,6]\}, & & \|\xi\|=1/2,\\
& \{2(1+1/\|\xi\|)\xi\}, & & \|\xi\|>1/2
\end{aligned} \right.
\]
is convex for every $\xi\in\R^p$, we conclude that \ref{ls:a3} also holds.

{\bf (c)} Let $m=p$, $D=I$, let $h:\R_+\to[0,1]$ be measurable and define a time-dependent
saturation nonlinearity $f:\R_+\times\R^p\to\R^p$ by
\begin{equation}\label{eq:saturation}
f(t,\xi):=h(t)\left\{ \begin{aligned} & \xi, & & \|\xi\|\leq 1,\\
& \xi/\|\xi\|, & & \|\xi\|>1.\end{aligned} \right.
\end{equation}
By routine calculations, we obtain for the pre-images $F_t^{-1}(\xi)$:
\[
F_t^{-1}(\xi)=\{\xi\} \quad \text{if} \quad t\in h^{-1}(0)\,,
\]
\[
F_t^{-1}(\xi)=\left\{ \begin{aligned} & \{[(1/(1-h(t))]\xi\}, & & \|\xi\|\leq 1-h(t),\\
& \{(1+h(t)/\|\xi\|)\xi\}, & & \|\xi\|>1-h(t),\end{aligned} \right.
\qquad\mbox{if $t\in h^{-1}\big((0,1)\big)$},
\]
and
\[
F_t^{-1}(\xi)=\left\{ \begin{aligned} & \overline{\B(0,1)}, & & \xi=0,\\
& \{(1+1/\|\xi\|)\xi\}, & & \xi\not=0,\end{aligned} \right.
\qquad\mbox{if $t\in h^{-1}(1)$}.
\]
It is clear that \ref{ls:a1} and \ref{ls:a2} are satisfied. As $F_t^{-1}(\xi)$ is convex for all $t\geq 0$ and all $\xi\in\R^p$ and $f(t,F_t^{-1}(\xi))=F_t^{-1}(\xi)-\xi$, we see that the set $f(t,F_t^{-1}(\xi))$
is convex for all $t\geq 0$ and all $\xi\in\R^p$, showing that \ref{ls:a3} also holds.
\hfill$\Diamond$
\end{rm}
\end{example}
The following lemma will be used in the proof of \Cref{thm:inclusion_wp}.
\begin{lemma}\label{lem:measurability}
For every measurable function $w:\R_+\to\R^p$, the set-valued function
$\R_+\rightrightarrows\R^p,\,\,t\mapsto F_t^{-1}(w(t))$ is measurable.
\end{lemma}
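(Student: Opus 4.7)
My approach is to realise the graph of the given set-valued map as the zero set of an auxiliary Carathéodory function and then invoke the measurable projection theorem.

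First, define $G : \R_+ \times \R^p \to \R^p$ by $G(t,\zeta) := F(t,\zeta) - w(t)$. Since $F$ is a Carathéodory function and $w$ is measurable, $G$ inherits the Carathéodory property: for each fixed $t$, the map $\zeta \mapsto G(t,\zeta)$ is continuous, and for each fixed $\zeta$, the map $t \mapsto G(t,\zeta)$ is measurable as the difference of two measurable functions. Let $\Phi : \R_+ \rightrightarrows \R^p$ denote the set-valued map under consideration, so that
\[
\Phi(t) = F_t^{-1}(w(t)) = \{\zeta \in \R^p : G(t,\zeta) = 0\},
\]
and hence the graph of $\Phi$ is $\mathrm{graph}(\Phi) = G^{-1}(\{0\})$.

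A standard consequence of the Carathéodory property (see, for example, \cite[Chapter 8]{af09}) is that $G$ is jointly measurable with respect to $\mathcal{L}(\R_+) \otimes \mathcal{B}(\R^p)$, where $\mathcal{L}$ and $\mathcal{B}$ denote the Lebesgue and Borel $\sigma$-algebras, respectively. Consequently $\mathrm{graph}(\Phi)$, being the preimage of the closed set $\{0\}$ under this jointly measurable function, is $\mathcal{L}(\R_+) \otimes \mathcal{B}(\R^p)$-measurable. To verify measurability of $\Phi$ in the sense adopted in \Cref{sec:preliminaries}, fix an arbitrary open $U \subset \R^p$. Then
\[
\Phi^{-1}(U) = \{t \in \R_+ : \Phi(t) \cap U \neq \emptyset\} = \mathrm{proj}_{\R_+}\big(\mathrm{graph}(\Phi) \cap (\R_+ \times U)\big),
\]
which is the projection onto $\R_+$ of an $\mathcal{L}(\R_+) \otimes \mathcal{B}(\R^p)$-measurable set. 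By the measurable projection theorem (see, for example, \cite[Chapter 8]{af09}), this projection is Lebesgue measurable, which establishes the claim.

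The main subtle point is the appeal to the projection theorem: projections of product-measurable sets are, in general, merely analytic rather than Borel, and Lebesgue measurability of the projection relies essentially on the completeness of the underlying measure space. This is automatic here, since $(\R_+, \mathcal{L}(\R_+))$ is complete and $\R^p$ is Polish, so the standard form of the result applies.
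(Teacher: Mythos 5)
Your proof is correct and in essence matches the paper's: the paper simply identifies $F_t^{-1}(w(t))$ as the level set $\{\xi : F(t,\xi) = w(t)\}$ of the Carathéodory function $F$ and invokes Theorem~8.2.9 of Aubin--Frankowska, whereas you have reconstructed the underlying argument of that theorem (joint $\mathcal{L}\otimes\mathcal{B}$-measurability of the Carathéodory map $G(t,\zeta)=F(t,\zeta)-w(t)$, identification of $\mathrm{graph}(\Phi)=G^{-1}(\{0\})$, and an appeal to the measurable projection theorem on a complete $\sigma$-finite space). Your note on completeness of $(\R_+,\mathcal{L})$ being essential for the projection step is exactly the right technical caveat.
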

\begin{proof}
Let $w:\R_+\to\R^p$ be a measurable function. Obviously,
\[
F_t^{-1}(w(t))=\{\xi\in\R^p:F_t(\xi)=w(t)\}=\{\xi\in\R^p:F(t,\xi)=w(t)\}.
\]
As $F$ is a Caratheodory function, an application of \cite[Theorem 8.2.9]{af09} yields
the claim.
\end{proof}
We are now in the position to prove \Cref{thm:inclusion_wp}.
\begin{proof}[Proof of~\Cref{thm:inclusion_wp}]
Let $t_0\geq 0$, $x^0\in\R^n$ and~$v\in L^\infty_{\rm loc}(\R_+,\R^p)$.

\ref{ls:inclusion_wp_s_1} Define the set-valued map
\[ \Phi:[t_0,\infty)\times\R^n \rightrightarrows  \mR^n, \quad (t,z)\mapsto Az+Bf\big(t,F_t^{-1}(Cz+D_{\rm e}v(t))\big)+B_{\rm e}v(t)\,.\]
We claim that
\begin{enumerate}[label = (\roman*)]
    \item $\Phi$ has nonempty, compact and convex values and, for each $t\geq t_0$, $z\mapsto
     \Phi(t,z)$ is upper semicontinuous on ${\rm im}\,(C,D_{\rm e})$;
    \item  for each $z\in\R^n$, the function $t\mapsto\Phi(t,z)$ has a measurable selection;
    \item for every compact set $X\subset\R^n$, there exists $\psi\in L^1_{\rm loc}(\R_+,\R_+)$ such that
        \[
        \sup\{\|\zeta\|:\zeta\in\Phi(t,z),\,z\in X\}\leq\psi(t)\quad\mbox{for a.e. $t\geq 0$.}
        \]
\end{enumerate}
To prove claim (i), note that, by statement \ref{ls:assumptions_s_2} of \Cref{lem:assumptions}, the map $F_t^{-1}$ is upper semicontinuous on ${\rm im}\,(C,D_{\rm e})$
and $F_t^{-1}(\xi)$ is nonempty and compact for all $t\geq 0$ and all $\xi\in {\rm im}\,(C,D_{\rm e})$. Hence, the continuity of $f(t,\,\cdot\,)$ implies that, for each $t\geq 0$, the set-valued map~$\xi\mapsto f(t, F_t^{-1}(\xi))$ is upper semicontinuous on ${\rm im}\,(C,D_{\rm e})$ with nonempty compact values. By \ref{ls:a3}, the set~$f(t,F_t^{-1}(\xi))$ is also convex for every~$t\geq 0$ and all~$\xi\in {\rm im}\,(C,D_{\rm e})$. Combined, these properties yield claim (i).

As for claim (ii), we invoke \Cref{lem:measurability}, to obtain that the map $t\mapsto F_t^{-1}(Cz+D_{\rm e}v(t))$ is measurable. Since it has nonempty and closed values, it follows that it has a measurable selection $w$ (see, for example, \cite[Proposition 3.2]{d92}). The Caratheodory property of $f$ guarantees that the function $t\mapsto f(t,w(t))$ is measurable, and hence is a measurable selection of $t\mapsto f(t,F_t^{-1}(Cz+D_{\rm e}v(t)))$. Consequently, for each $z\in\R^n$, the map $t\mapsto\Phi(t,z)$ has a measurable selection, showing that (ii) holds.

To establish claim (iii), let $\tau>0$ and $X\subset\R^n$ be compact. It is clear that there exists a compact set $K\subset\R^p$ and a null set $E\subset [0,\tau]$ such that
\[
Cz+D_{\rm e}v(t)\in K\quad\forall\,z\in X,\,\,\forall\,t\in [0,\tau]\backslash E.
\]
By assumption \ref{ls:a2} and statement \ref{ls:assumptions_s_1} of \Cref{lem:assumptions} there exists $c>0$ such that
\[
F_t^{-1}(K)\subset\B(0,c)\quad\forall\,t\in [0,\tau].
\]
Invoking \eqref{eq:single_valued}, we have that
\[
\sup\{\|f(t,\xi)\|:\xi\in\overline{\B(0,c)}\}\leq\varphi(t)\quad\forall\,t\geq 0
\]
for suitable $\varphi\in L^1_{\rm loc}(\R_+,\R_+)$. Therefore,
\[
\sup\{\|f(t,\xi)\|:\xi\in F_t^{-1}(Cz+D_{\rm e}v(t)),\, z\in X\}
\leq \sup\{\|f(t,\xi)\|:\xi\in F_t^{-1}(K)\}\leq\varphi(t)
\quad\forall\,t\in[0,\tau]\backslash E.
\]
Since $\tau>0$ was arbitrary, we may conclude that there exists $\psi\in L^1_{\rm loc}(\R_+,\R_+)$
such that (iii) holds.

It follows from claims (i)-(iii) and the theory of differential inclusions that the initial-value problem
\begin{equation}\label{eq:lure_inclusion}
\dot x(t)\in Ax(t)+Bf\big((t,F_t^{-1}(Cx(t)+D_{\rm e}v(t))\big)+B_{\rm e}v(t)=\Phi(t,x(t)),\quad x(t_0)=x^0
\end{equation}
has an absolutely continuous solution $x$ defined on a maximal interval of existence $[t_0,\tau)$,
where $t_0<\tau\leq\infty$, and
\begin{equation}\label{eq:blow_up}
\tau<\infty\quad\Longrightarrow\quad\limsup_{t\uparrow \tau}\|x(t)\|=\infty,
\end{equation}
see, for example, \cite[Corollary 5.2]{d92}. Setting $\tilde f:=Bf$ and $Y(t)=F_t^{-1}(Cx(t)+D_{\rm e}v(t))$ for $t\in[0,\tau)$, we have that
\begin{equation}\label{eq:lure_inclusion'}
\dot x(t)-Ax(t)-B_{\rm e}v(t)\in\tilde f(t,Y(t))=\{\tilde f(t,\xi):\xi\in Y(t)\},\quad\mbox{for a.e. $t\in[0,\tau)$}.
\end{equation}
As observed before, in the proof of claim (ii), the set-valued function $Y$ is measurable. Furthermore, the values of $Y$ are compact. It follows from~\eqref{eq:lure_inclusion'} that $\dot x-Ax-B_{\rm e}v$ is a measurable selection of $t\mapsto\tilde f(t,Y(t))$. By  Filippov's selection theorem (see \cite[Theorem 2.3.13]{v00}), there exists a measurable selection $y$ of $Y$ defined on $[t_0,\tau)$ such that
\[
\dot x(t)-Ax(t)-B_{\rm e}v(t)=\tilde f(t,y(t))=Bf(t,y(t))\quad\mbox{for a.e. $t\in[t_0,\tau)$}.
\]
Invoking \ref{ls:a2} together with statement \ref{ls:assumptions_s_1} of \Cref{lem:assumptions}, we see that there exists $\gamma\in L^\infty_{\rm loc}([t_0,\tau),\R_+)$ such that $\sup\{\|\xi\|:\xi\in Y(t)\}\leq\gamma(t)$ for almost every $t\in[t_0,\tau)$,
implying that $y\in L^\infty_{\rm loc}([t_0,\tau),\R^p)$. Combining this with \eqref{eq:single_valued}
shows that $f\circ y\in L^1_{\rm loc}([t_0,\tau),\R^p)$. Furthermore,
\begin{equation}\label{eq:lure_inclusion_p1}
     y(t)-Df(t,y(t))=F(t,y(t))=F_t(y(t))=Cx(t)+D_{\rm e}v(t)\quad\mbox{for a.e. $t\in[0,\tau)$}.
\end{equation}
Consequently, we have that the triple $(v,x,y)$ is a pre-trajectory of \eqref{eq:lure_ss} satisfying
$x(t_0)=x^0$. Finally, it follows from \eqref{eq:blow_up} that  $(v,x,y)$ is maximally defined and so
$(v,x,y)\in\tilde\sB(S^f,t_0)$, completing the proof of statement~\ref{ls:inclusion_wp_s_1}.

\ref{ls:inclusion_wp_s_2} It is an immediate consequence of statement \ref{ls:inclusion_wp_s_1} and \Cref{pro:blow_up}, that \eqref{eq:lure_ss} has the blow-up property and
\[
\int_{t_0}^\tau\big(\|y(t)\|+\|Bf(t,y(t))\|\big){\rm d}t=\infty.
\]
Furthermore, if $y$ was in $L^\infty([0,\tau),\R^p)$, then, by \eqref{eq:single_valued},
$f\circ y\in L^1([0,\tau),\R^m)$, leading to a contradiction with the divergence of the
above integral.

As $y\not\in L^\infty([0,\tau],\R^p)$, there exists, for every $k\in\N$,
a set $T_k\subset [0,\tau)$ of positive measure such that $\|y(t)\|\geq k$ for all $t\in T_k$. It follows from \ref{ls:a2} that,  for every $l\in\N$, there exists $k_l\in\N$ such that
\[
\|F(t,y(t))\|=\|F_t(y(t))\|\geq l\quad\mbox{for a.e. $t\in T_{k_l}$}.
\]
Using that $Cx(t)=F(t,y(t))-D_{\rm e}v(t)$ for almost every $t\in[t_0,\tau)$,
we conclude that $x$ is unbounded on $[t_0,\tau)$, hence $\limsup_{t\uparrow\tau}\|x(t)\|=\infty$.
Seeking a contradiction, suppose that $\|x(t)\|$ does not converge to $\infty$ as $t\uparrow\tau$.
Then there exist numbers $\rho>0$ and $t_j\in[t_0,\tau)$, $j\in\N$, such that $t_j\to\tau$ as $j\to\infty$ and $x(t_j)\in\B(0,\rho)$ for all $j\in\N$. Let $\varepsilon>0$ and note that, by the
unboundedness of $x$,
\[
S_j:=\{s\in[t_j,\tau):x(s)\not\in\B(0,\rho+\varepsilon)\}\not=\emptyset\quad\forall\,j\in\N.
\]
Setting $s_j:=\inf S_j$, it is obvious that $t_j<s_j<\tau$, $s_j\to\tau$ as $j\to\infty$,
$\|x(s_j)\|=\rho+\varepsilon$ and $x(t)\in\overline{\B(0,\rho+\varepsilon)}$ for all $t\in[t_j,s_j]$ and all $j\in\N$. It is clear that there exists a compact set $\Gamma\subset\R^p$ such that
\[
Cx(t)+D_{\rm e}v(t)\in\Gamma\quad\mbox{for a.e. $t\in[t_j,s_j]$, $j\in\N$}.
\]
Invoking assumption \ref{ls:a2} and \Cref{lem:assumptions}, we see that there exists
a compact set $K\subset\R^p$ such that
\[
F_t^{-1}(\Gamma)\subset K\quad\forall\,t\in[t_0,\tau].
\]
As $y(t)\in F_t^{-1}(Cx(t)+D_{\rm e}v(t))$ for almost every $t\in[t_0,\tau)$, we conclude that
\[
y(t)\in K\quad\mbox{for a.e. $t\in[t_j,s_j]$, $j\in\N$}.
\]
It now follows from \eqref{eq:single_valued} that there exists $\kappa\in L^1([t_0,\tau],\R_+)$ such that
\[
\|f(t,y(t))\|\leq\kappa(t)\quad\mbox{for a.e. $t\in[t_j,s_j]$, $j\in\N$}.
\]
Consequently, routine estimates of $x$ yield
\begin{align*}
\varepsilon\leq\|x(s_j)-x(t_j)\|&\leq \int_{t_j}^{s_j} \| \dot x(s) \| \, \rd s \\
& \leq \big(\|A\|(\rho+\varepsilon)+\|B_{\rm e}\|\|v\|_{L^\infty(t_0,\tau)}\big)
(s_j-t_j)+\|B\|\int_{t_j}^{s_j}\kappa(t){\rm d}t\quad\forall\,j\in\N.
\end{align*}
As the right-hand side converges to $0$ as $j\to\infty$, we obtain a contradiction to the positivity of
$\varepsilon$. Consequently, $\|x(t)\|\to\infty$ as $t\uparrow\tau$. This in turn implies, via the
variation-of-parameters formula for $x$, that $\int_{t_0}^\tau\|Bf(t,y(t))\|{\rm d}t=\infty$.
\end{proof}
Next we provide a condition guaranteeing forward completeness and existence of trajectories for every initial condition.
\begin{corollary}\label{cor:inclusion_wp}
Assume that \ref{ls:a1} and \ref{ls:a3} hold, for all compact sets $K\subset\R^p$, there exists $\varphi\in L^1_{\rm loc}(\R_+,\R_+)$ such that \eqref{eq:single_valued} holds, and,
for all compact sets $T\subset\R_+$, there exist $\rho>0$ and $c>0$ such that $c\|D\|<1$ and
\eqref{eq:al_bound} is satisfied. Then $\tilde\sB(S^f,t_0)=\sB(S^f,t_0)$ for all $t_0\geq 0$ {\rm (}that is, \eqref{eq:lure_ss} is forward complete{\rm )}. Furthermore, for all $v\in L^\infty_{\rm loc}(\R_+,\R^{m_{\rm e}})$, $t_0\geq 0$ and $x^0\in\R^n$, there exists $(v,x,y)\in \sB(S^f,t_0)$ such that $x(t_0)=x^0$.
\end{corollary}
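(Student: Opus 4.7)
The plan is twofold: first secure existence of a maximally defined pre-trajectory for every $(t_0,x^0,v)$ by invoking Theorem \ref{thm:inclusion_wp} (in the relaxed form of Remark \ref{rem:a.e.'}), and then exploit the linear growth in \eqref{eq:al_bound} together with the output equation \eqref{eq:lure_ss_b} to rule out finite-time blow-up of the state.

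For the first step, \ref{ls:a1} and \ref{ls:a3} are assumed outright, so it suffices to verify \ref{ls:a2'}. Let $T\subset\R_+$ be compact and let $\rho,c>0$ be as in the hypothesis. For a.e.\ $t\in T$ and every $\xi\in\R^p$ with $\|\xi\|\geq\rho$, \eqref{eq:al_bound} yields $\|f(t,\xi)\|\leq c\|\xi\|$, and hence
\[
\|F_t(\xi)\|\geq\|\xi\|-\|D\|\,\|f(t,\xi)\|\geq(1-c\|D\|)\|\xi\|.
\]
Since $c\|D\|<1$, for any prescribed level $\rho'>0$ the choice $\sigma:=\max\{\rho,\rho'/(1-c\|D\|)\}$ forces $\|F_t(\xi)\|\geq\rho'$ for a.e.\ $t\in T$ whenever $\|\xi\|\geq\sigma$. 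Thus \ref{ls:a2'} holds, and Theorem \ref{thm:inclusion_wp}\ref{ls:inclusion_wp_s_1} (via Remark \ref{rem:a.e.'}) furnishes the desired $(v,x,y)\in\tilde\sB(S^f,t_0)$ with $x(t_0)=x^0$.

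For forward completeness, fix such a maximal $(v,x,y)$ on $[t_0,\tau)$ and suppose, toward a contradiction, that $\tau<\infty$. From \eqref{eq:lure_ss_b} we have $\|y(t)\|\leq\|Cx(t)+D_{\rm e}v(t)\|+\|D\|\,\|f(t,y(t))\|$ for a.e.\ $t\in[t_0,\tau)$. Splitting according to whether $\|y(t)\|\geq\rho$ or $\|y(t)\|<\rho$, using \eqref{eq:al_bound} in the former case and rearranging, one obtains
\[
\|y(t)\|\leq\rho+\frac{1}{1-c\|D\|}\|Cx(t)+D_{\rm e}v(t)\|\quad\mbox{for a.e. }t\in[t_0,\tau).
\]
Applying \eqref{eq:single_valued} with $K=\overline{\B(0,\rho)}$ on $\{\|y\|<\rho\}$ and \eqref{eq:al_bound} on $\{\|y\|\geq\rho\}$ then produces $\varphi\in L^1([t_0,\tau],\R_+)$ such that $\|f(t,y(t))\|\leq\varphi(t)+c\|y(t)\|$. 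Combining this with the previous bound on $\|y(t)\|$ and the local essential boundedness of $v$ yields $\psi\in L^1([t_0,\tau],\R_+)$ and $\alpha>0$ such that
\[
\|Bf(t,y(t))\|\leq\psi(t)+\alpha\|x(t)\|\quad\mbox{for a.e. }t\in[t_0,\tau).
\]
The variation-of-parameters formula applied to \eqref{eq:lure_ss_a} and Gronwall's inequality then force $x\in L^\infty([t_0,\tau),\R^n)$, and consequently $\int_{t_0}^\tau\|Bf(t,y(t))\|\,\rd t<\infty$, contradicting Theorem \ref{thm:inclusion_wp}\ref{ls:inclusion_wp_s_2}. Hence $\tau=\infty$, giving both $\tilde\sB(S^f,t_0)=\sB(S^f,t_0)$ and the claimed existence of a trajectory through any $x^0$.

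The main obstacle is bookkeeping rather than new ideas: one must carefully handle the ``for a.e.\ $t\in T$'' qualifier attached to \eqref{eq:al_bound} so that it yields the essentially uniform radial unboundedness required by Remark \ref{rem:a.e.'}, and assemble the case split $\{\|y\|\geq\rho\}$ vs.\ $\{\|y\|<\rho\}$ into a single linear-plus-$L^1$ bound on $\|Bf(t,y(t))\|$ usable in Gronwall. The overall structure parallels the proof of Corollary \ref{cor:single_valued}, but works directly with the output equation \eqref{eq:lure_ss_b} rather than through an explicit elimination of $y$.
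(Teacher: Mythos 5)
Your proposal is correct and follows essentially the same route as the paper: verify \ref{ls:a2'} from $c\|D\|<1$ and \eqref{eq:al_bound}, invoke \Cref{thm:inclusion_wp} via \Cref{rem:a.e.'}, and then combine the pointwise bound $\|y(t)\|\leq\rho+(1-c\|D\|)^{-1}\|Cx(t)+D_{\rm e}v(t)\|$ with $\|f(t,y(t))\|\leq\varphi(t)+c\|y(t)\|$, variation of parameters and Gronwall, reaching a contradiction with statement \ref{ls:inclusion_wp_s_2} of \Cref{thm:inclusion_wp}. The only (cosmetic) difference is that you apply Gronwall to $\|x\|$ and contradict the divergence of $\int_{t_0}^\tau\|Bf(t,y(t))\|\,\rd t$, whereas the paper applies Gronwall to $\|y\|$ and contradicts $y\notin L^\infty([t_0,\tau),\R^p)$.
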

\begin{proof}
As $c\|D\|<1$, it follows from \eqref{eq:al_bound} that $F_t$ is radially unbounded, locally essentially uniformly, in the sense of \Cref{rem:a.e.}, that is, \ref{ls:a2'} in \Cref{rem:a.e.'}
is satisfied. Hence, by  \Cref{rem:a.e.'}, the conclusions of \Cref{thm:inclusion_wp} hold. Therefore, we only need to establish that  $\tilde\sB(S^f,t_0)=\sB(S^f,t_0)$ for all $t_0\geq 0$. To this end, let $(v,x,y)$ be a pre-trajectory defined on $[t_0,\tau)$, where $t_0<\tau<\infty$. It is sufficient to prove that $y\in L^\infty([t_0,\tau),\R^p)$. Indeed, in this case,
statement \ref{ls:inclusion_wp_s_2} of \Cref{thm:inclusion_wp} implies that every maximally defined trajectory starting at $t_0$ is defined on $[t_0,\infty)$ and is therefore in $\sB(S^f,t_0)$.

To show that $y\in L^\infty([t_0,\tau),\R^p)$, note that, by \eqref{eq:al_bound}, there exist $\rho>0$, $c>0$ and a null set $E\subset[t_0,\tau]$
such that $c\|D\|<1$ and
\begin{equation}\label{eq:al_bound'}
\|f(t,\xi)\|\leq c\|\xi\|\quad\forall\,t\in [t_0,\tau]\backslash E,\,\,\forall\,\xi\in\R^p\backslash\B(0,\rho).
\end{equation}
For $t\in [t_0,\tau]\backslash E$, if $\|y(t)\|\geq\rho$, we have
\[
\|y(t)\|\leq\frac{1}{1-c\|D\|}\|Cx(t)+D_{\rm e}v(t)\|,
\]
and thus,
\[
\|y(t)\|\leq\rho+\frac{1}{1-c\|D\|}\|Cx(t)+D_{\rm e}v(t)\|\quad\mbox{for a.e. $t\in[t_0,\tau)$.}
\]
An application of the variation-of-parameters formula for $x$ yields that there
exist constants $c_1,c_2>0$ and a continuous non-negative function $h$ defined
on $[t_0,\tau]$ such that
\begin{equation}\label{eq:y_inequality}
\|y(t)\|\leq c_1+c_2\int_{t_0}^th(s)\|f(s,y(s))\|{\rm d}s\quad\mbox{for a.e. $t\in[t_0,\tau)$.}
\end{equation}
By \eqref{eq:single_valued}, there exists $\varphi\in L^1_{\rm loc}(\R_+,\R_+)$ such that
\[
\sup_{\xi\in\overline{\B(0,\rho)}}\|f(t,\xi)\|\leq\varphi(t)\quad\mbox{for a.e. $t\geq 0$}.
\]
Combining this with \eqref{eq:al_bound'} shows that
\[
\|f(t,y(t))\|\leq\varphi(t)+c\|y(t)\|\quad\mbox{for a.e. $t\in[t_0,\tau)$},
\]
which in conjunction with \eqref{eq:y_inequality} leads to
\[
\|y(t)\|\leq c_3+c_4\int_{t_0}^th(s)\|y(s)\|{\rm d}s\quad\mbox{for a.e. $t\in[t_0,\tau)$,}
\]
where $c_3$ and $c_4$ are suitable positive constants. An application of the Gronwall lemma (see \Cref{lem_a:gronwall} in the Appendix for a suitably general version) yields that $\|y(t)\|\leq c_3+\exp\big(c_4\int_{t_0}^t h(s){\rm d}s\big)$ for almost every $t\in [t_0,\tau)$, whence
$y\in L^\infty([t_0,\tau),\R^p)$.
\end{proof}
\section{Sufficient conditions for the radial unboundedness property}\label{sec:radial_unboundedness}
Radial unboundedness of $F_t$, locally uniformly in $t$, plays an important role
in Theorems \ref{thm:single_valued} and \ref{thm:inclusion_wp}. In this section,
we provide a number of sufficient conditions for this property.
\begin{proposition}\label{pro:radial_unboundedness_diff}
Assume that $f$ is locally Lipschitz in the sense of \eqref{eq:lipschitz} and, for every $M\in
\bigcup_{(t,\xi)\in\R_+\times\R^p}({\rm d^{\rm c}}f_t)(\xi)$, the matrix $I-DM$ is invertible. Assume further that,
for every compact set $T\subset\R_+$, there exist $\xi_0\in\R^p$ and $b>0$ such that
\begin{equation}\label{eq:radial_unboundedness_diff}
\sup_{t\in T}\|Df(t,\xi_0)\|\leq b\quad\mbox{and}\quad \|(I-DM)^{-1}\|\leq b\quad
\forall\,M\in\bigcup_{(t,\xi)\in T\times\R^p}({\rm d}^{\rm c}f_t)(\xi).
\end{equation}
Under these conditions, $F_t$ is a Lipschitz homeomorphism for every $t\geq 0$ and $F_t$ is radially unbounded, locally uniformly in $t$.
\end{proposition}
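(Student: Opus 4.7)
The plan is to verify the hypotheses of \Cref{thm:local_global}\ref{ls:local_global_s_3} for each $F_t$, after first establishing the radial unboundedness of $F_t$ locally uniformly in $t$. Since $f_t$ is locally Lipschitz (by \eqref{eq:lipschitz}), so is $F_t$, and \Cref{lem:Lg} gives $({\rm d}^{\rm c}F_t)(\xi) = I - D({\rm d}^{\rm c}f_t)(\xi)$; hence every matrix in $\bigcup_{\xi}({\rm d}^{\rm c}F_t)(\xi)$ is of the form $I - DM$ with $M \in ({\rm d}^{\rm c}f_t)(\xi)$, which is invertible by the first hypothesis. The main task is therefore the radial unboundedness. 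My strategy is to prove the stronger statement that, for every compact $T \subset \R_+$, each $F_t$ with $t \in T$ is a global homeomorphism whose inverse is $b$-Lipschitz, where $b$ and $\xi_0$ are as furnished by the hypothesis for this $T$.

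For the local step, I would invoke Clarke's inverse function theorem at each point $\xi^* \in \R^p$, which yields a local inverse $G$ of $F_t$ defined on some open neighbourhood $V$ of $F_t(\xi^*)$. At a.e.\ $\zeta \in V$ the chain rule gives $({\rm d}G)(\zeta) = \bigl(({\rm d}F_t)(G(\zeta))\bigr)^{-1}$, whose norm is bounded by $b$ whenever $t \in T$. Shrinking $V$ to an open convex sub-neighbourhood and applying \Cref{lem:derivative_bounds}\ref{ls:derivative_bounds_s_1} to $G$ then shows that $G$ is $b$-Lipschitz there.

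The global step is a continuation/path-lifting argument. For each $\zeta \in \R^p$ and $t \in T$, I would lift the straight-line segment from $F_t(\xi_0)$ to $\zeta$, starting at $\xi_0$, by concatenating local $b$-Lipschitz inverses. A standard maximality argument (the partial lift is $b$-Lipschitz, hence bounded and with a limit at any right-endpoint, to which Clarke's theorem can then be reapplied) shows the lift exists on the entire segment, defining a map $G_t: \R^p \to \R^p$ with $F_t \circ G_t = {\rm id}_{\R^p}$, $\|G_t(\zeta) - \xi_0\| \leq b\|\zeta - F_t(\xi_0)\|$ for all $\zeta$, and hence globally $b$-Lipschitz. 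To see that $G_t$ is also a left inverse of $F_t$, I observe that
\[
G_t(\R^p) = \{\xi \in \R^p : G_t(F_t(\xi)) = \xi\},
\]
which is closed by continuity of $G_t$ and $F_t$, and open because $F_t$ is a local homeomorphism (by \Cref{thm:invariance_of_domain} together with the local injectivity from Clarke's theorem) and so $G_t$ must coincide locally with the unique branch of the local inverse. Since $\R^p$ is connected and $G_t(\R^p)$ is nonempty, $G_t(\R^p) = \R^p$, and $G_t$ is the inverse of $F_t$.

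Finally, from $\|\xi - \xi_0\| \leq b\|F_t(\xi) - F_t(\xi_0)\|$ and $\|F_t(\xi_0)\| \leq \|\xi_0\| + \|Df(t,\xi_0)\| \leq \|\xi_0\| + b$ for $t \in T$, one obtains $\|F_t(\xi)\| \to \infty$ as $\|\xi\| \to \infty$, uniformly for $t \in T$; as $T$ was arbitrary, this is the required locally uniform radial unboundedness. Combined with the already noted invertibility of every matrix in the Clarke subdifferential of $F_t$, \Cref{thm:local_global}\ref{ls:local_global_s_3} yields that $F_t$ is a Lipschitz homeomorphism for every $t \geq 0$. The most delicate step is the global-inversion argument of the previous paragraph: the continuation must be shown to terminate (the uniform Lipschitz bound is what prevents escape to infinity), and the clopen-then-connected conclusion must be handled carefully so as not to tacitly presuppose the properness of $F_t$ that one is trying to establish.
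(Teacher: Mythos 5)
Your proof is correct and reaches the same conclusion, but by a more self-contained route than the paper's. The paper invokes Pourciau's locally Lipschitz version of Hadamard's global inversion theorem \cite{p82} to conclude directly that each $F_t$ is a Lipschitz homeomorphism, then uses the result of Behr and Dolzmann \cite{bd24} (namely $N_{F_t^{-1}}=F_t(N_{F_t})$) to obtain the almost-everywhere formula $({\rm d}F_t^{-1})(\xi)=\bigl(({\rm d}F_t)(F_t^{-1}(\xi))\bigr)^{-1}$, and hence, via \Cref{lem:derivative_bounds}\ref{ls:derivative_bounds_s_1}, the uniform global Lipschitz bound $\|F_t^{-1}(\xi)-F_t^{-1}(\zeta)\|\leq b\|\xi-\zeta\|$ for $t\in T$. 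Your path-lifting argument instead reproves Hadamard's theorem from scratch and delivers the global homeomorphism together with the uniform $b$-Lipschitz bound on $F_t^{-1}$ in one stroke, without leaning on \cite{p82} or \cite{bd24}. The closing radial-unboundedness estimate, built on that uniform Lipschitz bound and the first inequality in \eqref{eq:radial_unboundedness_diff}, is essentially identical in both proofs. The paper's route buys brevity; yours buys independence from two external results, at the cost of the more delicate continuation and clopen-connectedness steps that you rightly flag as the crux.

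One step in your local argument is glossed and deserves to be spelled out. You assert that at a.e.\ $\zeta\in V$ the chain rule gives $({\rm d}G)(\zeta)=\bigl(({\rm d}F_t)(G(\zeta))\bigr)^{-1}$. Rademacher yields differentiability of the locally Lipschitz local inverse $G$ at a.e.\ $\zeta$, but the chain rule additionally requires $F_t$ to be differentiable at $G(\zeta)$, which is not automatic: a priori $G$ could send a positive-measure set into $N_{F_t}$. The fix is elementary but should be stated: at any $\zeta$ where the bi-Lipschitz map $G$ is differentiable, the lower Lipschitz bound forces $({\rm d}G)(\zeta)$ to be invertible, and a homeomorphism differentiable at a point with invertible derivative has a differentiable inverse at the image point, so $F_t$ is differentiable at $G(\zeta)$ with derivative $\bigl(({\rm d}G)(\zeta)\bigr)^{-1}$. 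This is precisely the content the paper outsources to \cite{bd24}.
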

\begin{proof}
By the Hadamard theorem for locally Lipschitz functions \cite{p82},\footnote[2]{The ``classical'' version of the Hadmard theorem for continuously differentiable functions can be found in many places, see, for example,  \cite[Theorem 15.4]{d85}, \cite{mgz94} or \cite{s80,s02}.}
it follows from the hypotheses that $F_t=I-Df_t$ is a Lipschitz homeomorphism for every $t\geq 0$.
In particular, for each $t\in\R_+$, the map $F_t$ is radially unbounded.
Let $T\subset\R_+$ be compact. We show that the radial unboundedness is uniform in $t$ for $t\in T$.
To this end, we note that Rademacher's theorem guarantees that $F_t$ and $F_t^{-1}$ are differentiable almost everywhere, and, by \cite{bd24}, $N_{F_t^{-1}}=F_t(N_{F_t})$. Hence, for every $\xi\in\R^p\backslash F_t(N_{F_t})$, the maps $F_t^{-1}$ and $F_t$ are differentiable
at $\xi$ and $F_t^{-1}(\xi)$, respectively, and $({\rm d} F_t^{-1})(\xi)=\big(({\rm d} F_t)(F_t^{-1}(\xi))\big)^{-1}$ for all $t\in T$ and all $\xi\in\R^p\backslash F_t(N_{F_t})$.
As $({\rm d} F_t)(F_t^{-1}(\xi))\in ({\rm d}^{\rm c} F_t)(F_t^{-1}(\xi))=I-D({\rm d}^{\rm c} f_t)(F_t^{-1}(\xi))$ for all $\xi\in\R^p\backslash F_t(N_{F_t})$, it follows that
\[
\|({\rm d} F_t^{-1})(\xi)\|\leq b\quad\forall\,t\in T,\,\,\forall\,\xi\in\R^p\backslash F_t(N_{F_t}).
\]
Invoking statement \ref{ls:derivative_bounds_s_1} of \Cref{lem:derivative_bounds}, we conclude that
\[
\|F_t^{-1}(\xi)-F_t^{-1}(\zeta)\|\leq b\|\xi-\zeta\|\quad\forall\,\xi,\zeta\in\R^p,\,\forall\,t\in T,
\]
that is, $F_t^{-1}$ is globally Lipschitz, uniformly in $t$ for $t\in T$. This in turn implies that
\[
\|F_t(\xi)-F_t(\zeta)\|\geq (1/b)\|\xi-\zeta\|\quad\forall\,\xi,\zeta\in\R^p,\,\forall\,t\in T,
\]
and thus, using the first inequality in \eqref{eq:radial_unboundedness_diff},
\[
\|F_t(\xi)\|\geq (1/b)\|\xi-\xi_0\|-b-\|\xi_0\|\geq(1/b)\|\xi\|-\big((1+1/b)\|\xi_0\|+b\big)\quad
\forall\,\xi\in\R^p,\,\forall\,t\in T.
\]
The last inequality shows that $F_t$ is radially unbounded, uniformly in $t$ for $t\in T$.
\end{proof}
\begin{proposition}\label{pro:radial_unboundedness_diff'}
Assume that $f$ is locally Lipschitz in the sense of \eqref{eq:lipschitz}, and, for
all compact sets $T\subset\R_+$ and $K\subset\R^p$, there exist a constant $b>0$,
such that $\|Df(t,\xi)\|\leq b$ for all $t\in T$ and all $\xi\in K$.
If, for all compact $T\subset\R_+$, there exist $\rho>0$ and $b_1<1$ or $b_2>1$
such that
\begin{equation}\label{eq:radial_unboundedness_diff'1}
\langle D({\rm d} f_t)(\xi)\zeta,\zeta\rangle\leq b_1\|\zeta\|^2\quad\forall\,
\zeta\in\R^p,\,\,\forall\,t\in T,\,\,\forall\,\xi\in\big(\R^p\backslash\B(0,\rho)\big)\backslash N_{f_t},
\end{equation}
or
\begin{equation}\label{eq:radial_unboundedness_diff'2}
\langle D({\rm d} f_t)(\xi)\zeta,\zeta\rangle\geq b_2\|\zeta\|^2\quad\forall\,
\zeta\in\R^p,\,\,\forall\,t\in T,\,\,\forall\,\xi\in\big(\R^p\backslash\B(0,\rho)\big)\backslash N_{f_t},
\end{equation}
then $F_t$ is radially unbounded, locally uniformly in $t$.
\end{proposition}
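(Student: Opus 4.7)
The plan is to use statement \ref{ls:derivative_bounds_s_2} of \Cref{lem:derivative_bounds} to upgrade the pointwise derivative bound \eqref{eq:radial_unboundedness_diff'1} or \eqref{eq:radial_unboundedness_diff'2} into a monotonicity estimate for $Df_t$ on line segments lying in the exterior of the ball $\B(0,\rho)$, and then to compare $F_t(\xi)$ to $F_t(\eta)$, where $\eta$ is the radial projection of $\xi$ onto a sphere of slightly larger radius.

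First, fix a compact set $T\subset\R_+$ and take $\rho' > \rho$ (say $\rho' = \rho+1$). Set $U:=\{z\in\R^p:\|z\|>\rho\}$, which is open, and observe that the hypothesis on $Df_t$ gives, for every $t\in T$,
\[
\langle D({\rm d}f_t)(z)\zeta,\zeta\rangle\leq b_1\|\zeta\|^2\quad\mbox{or}\quad \langle D({\rm d}f_t)(z)\zeta,\zeta\rangle\geq b_2\|\zeta\|^2,\qquad\forall\,\zeta\in\R^p,\,\,\forall\,z\in U\backslash N_{f_t}.
\]
Applying statement \ref{ls:derivative_bounds_s_2} of \Cref{lem:derivative_bounds} to $g:=Df_t:\R^p\to\R^p$ on $U$ (note $N_{Df_t}\subset N_{f_t}$), we obtain
\[
\langle Df_t(z_1)-Df_t(z_2),z_1-z_2\rangle\leq b_1\|z_1-z_2\|^2 \quad\mbox{or}\quad \langle Df_t(z_1)-Df_t(z_2),z_1-z_2\rangle\geq b_2\|z_1-z_2\|^2
\]
for all $z_1,z_2$ with $[z_1,z_2]\subset U$ and all $t\in T$, depending on which case holds.

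Next, for $\xi\in\R^p$ with $\|\xi\|\geq\rho'$, set $\eta:=\rho'\xi/\|\xi\|$. Every point on the segment $[\eta,\xi]$ has norm in $[\rho',\|\xi\|]\subset(\rho,\infty)$, so $[\eta,\xi]\subset U$. Since $F_t=I-Df_t$,
\[
\langle F_t(\xi)-F_t(\eta),\xi-\eta\rangle=\|\xi-\eta\|^2-\langle Df_t(\xi)-Df_t(\eta),\xi-\eta\rangle,
\]
which in case \eqref{eq:radial_unboundedness_diff'1} is bounded below by $(1-b_1)\|\xi-\eta\|^2$, and in case \eqref{eq:radial_unboundedness_diff'2} is bounded above by $-(b_2-1)\|\xi-\eta\|^2$. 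In either case, applying the Cauchy--Schwarz inequality yields, with $c:=\min\{1-b_1,\,b_2-1\}>0$ (whichever is relevant),
\[
\|F_t(\xi)-F_t(\eta)\|\geq c\|\xi-\eta\|=c(\|\xi\|-\rho')\quad\forall\,t\in T,\,\,\forall\,\xi\,\,\mbox{with}\,\,\|\xi\|\geq\rho'.
\]

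Finally, since $\eta$ lies on the compact sphere $\{z\in\R^p:\|z\|=\rho'\}$, the hypothesis that $\|Df(t,\,\cdot\,)\|$ is bounded on compact sets (uniformly for $t\in T$) furnishes an $M>0$ such that $\|F_t(\eta)\|\leq\rho'+M=:M'$ for all such $\eta$ and all $t\in T$. The reverse triangle inequality then gives
\[
\|F_t(\xi)\|\geq c(\|\xi\|-\rho')-M'\quad\forall\,t\in T,\,\,\forall\,\xi\,\,\mbox{with}\,\,\|\xi\|\geq\rho',
\]
which establishes the locally uniform radial unboundedness of $F_t$, since $T$ was arbitrary. The only technical point that requires care is the choice $\rho'>\rho$ (rather than $\rho'=\rho$), which ensures that the segment $[\eta,\xi]$ lies in the \emph{open} set $U$ on which \Cref{lem:derivative_bounds} can be invoked; this is really the only place where the assumption that the derivative bound holds strictly outside $\B(0,\rho)$ (as opposed to on its closure) plays a role.
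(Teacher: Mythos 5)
Your proof is correct and follows essentially the same route as the paper's: apply statement \ref{ls:derivative_bounds_s_2} of \Cref{lem:derivative_bounds} to $Df_t$ on the exterior of a ball, combine with Cauchy--Schwarz to get a lower bound on $\|F_t(\xi)-F_t(\eta)\|$, and then use the uniform bound on $\|Df(t,\,\cdot\,)\|$ on the sphere to conclude. The one refinement is your introduction of $\rho'>\rho$ so that the segment $[\eta,\xi]$ lies in the genuinely \emph{open} set $\{z:\|z\|>\rho\}$ as required by \Cref{lem:derivative_bounds}; the paper instead takes the radial projection $\xi_\rho\in\partial\B(0,\rho)$ directly, so your version is slightly more careful on this technical point.
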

\begin{proof} Let $T\subset\R_+$ be compact and let $\xi\in\R^p\backslash\B(0,\rho)$. Define $\xi_\rho:=(\rho/\|\xi\|)\xi
\in\partial\B(0,\rho)$ and note that $[\xi_\rho,\xi]\subset\R^p\backslash\B(0,\rho)$.
By statement \ref{ls:derivative_bounds_s_2} of \Cref{lem:derivative_bounds},
\[
\langle Df_t(\xi)-Df_t(\xi_\rho),\xi-\xi_\rho\rangle\leq b_1\|\xi-\xi_\rho\|^2\quad
\forall\,\xi\in\R^p\backslash\B(0,\rho),\,\,\forall\,t\in T.
\]
or
\[
\langle Df_t(\xi_\rho)-Df_t(\xi),\xi-\xi_\rho\rangle\leq -b_2\|\xi-\xi_\rho\|^2\quad
\forall\,\xi\in\R^p\backslash\B(0,\rho),\,\,\forall\,t\in T,
\]
depending on whether \eqref{eq:radial_unboundedness_diff'1} or \eqref{eq:radial_unboundedness_diff'2} is satisfied. Setting $\varepsilon:=
1-b_1$ if \eqref{eq:radial_unboundedness_diff'1} holds and  $\varepsilon:=b_2-1$  if \eqref{eq:radial_unboundedness_diff'2} holds, we have that $\varepsilon>0$ and it follows that
\[
\langle F_t(\xi)-F_t(\xi_\rho),\xi-\xi_\rho\rangle\geq\varepsilon\|\xi-\xi_\rho\|^2\quad
\forall\,\xi\in\R^p\backslash\B(0,\rho),\,\,\forall\,t\in T.
\]
By hypothesis, there exists $b>0$ such that $\|Df(t,\zeta)\|\leq b$ for all $\zeta\in \partial\B(0,\rho)$ and $t\in T$, whence $\|F_t(\zeta)\|\leq b+\rho$ for all $\zeta\in\partial\B(0,\rho)$ and $t\in T$. Therefore, for all $\xi\in\R^p\backslash\B(0,\rho)$ and all $t\in T$,
\[
\|F_t(\xi)\|\geq\varepsilon\|\xi-\xi_\rho\|-\|F_t(\xi_\rho)\|
\geq\varepsilon\|\xi-\xi_\rho\|-(b+\rho)
\geq\varepsilon\|\xi\|-\big(b+(\varepsilon+1)\rho\big),
\]
proving the claim.
\end{proof}
%
%
%
%
\appendix
\section{Appendix}
The Appendix contains a proof of \Cref{pro:max_def} and the statement and proofs of two auxiliary results which have been used in the main text.
\subsection{Proof of \Cref{pro:max_def}}
For a given pre-trajectory $(v,x,y)$ on $[t_0,\tau)$, where $0\leq t_0<\tau<\infty$, let
$\sE$ be the set of all~$(X,Y)\in W^{1,1}_{\rm loc}([t_0,T),\R^n)\times L^1_{\rm loc}([t_0,T),\R^p)$ such that $\tau\leq T\leq\infty$,~$(X,Y)|_{[t_0,\tau)}=(x,y)$ and $(v,X,Y)$ is a pre-trajectory of~\eqref{eq:lure_ss} on $[t_0,T)$, where
$T$ depends on~$(X,Y)$. As~$(x,y)\in\sE$, the set~$\sE$ is non-empty. Let~$(x_j,y_j)\in\sE$ be defined on~$[t_0,\tau_j)$, where $\tau\leq\tau_j\leq\infty$, $j=1,2$. We define a partial order on~$\sE$ as follows:
\[
(x_1,y_1)\preceq(x_2,y_2)\quad\Longleftrightarrow\quad\tau_1\leq\tau_2\,\,\,\mbox{and}\,\,\,
(x_2,y_2)|_{[t_0,\tau_1)}=(x_1,y_1).
\]
To prove the claim, we have to show that~$\sE$ has a maximal element, that is, an element~$(x_{\rm m},y_{\rm m})\in\sE$ such that, if~$(X,Y)\in\sE$ and~$(x_{\rm m},y_{\rm m})\preceq
(X,Y)$, then~$(X,Y)=(x_{\rm m},y_{\rm m})$. This we do by an application of Zorn's lemma, by which it is sufficient to show that every totally ordered subset~$\sT\subset\sE$ has an upper bound in~$\sE$, that is, an element~$(x_{\rm u},y_{\rm u})\in\sE$ such that~$(X,Y)\preceq
(x_{\rm u},y_{\rm u})$ for all~$(X,Y)\in\sT$. To this end, let $\sT$ be a totally ordered subset of $\sE$, and, for $(X,Y)\in\sT$, let $[t_0,T_{X,Y})$ be the interval on which $(X,Y)$ is defined, where $\tau\leq T_{X,Y}\leq\infty$. We set
\[
\tau_{\rm u}:=\sup\{T_{X,Y}:(X,Y)\in\sT\}\geq\tau>t_0
\]
and define~$(x_{\rm u},y_{\rm u})$ on~$[t_0,\tau_u)$ by
\[
(x_{\rm u},y_{\rm u})(t):=(X,Y)(t)\quad\forall\:t\in[t_0,T_{X,Y}).
\]
As~$\sT$ is totally ordered, the pair~$(x_{\rm u},y_{\rm u})$ is well-defined, is an element
of~$\sE$ and an upper bound for~$\sT$, completing the proof.
\hfill$\Box$
\subsection{Set-valued maps}
Recall that a map $g:\R^n\to\R^m$ is said to be closed if the image of every closed set is closed.
\begin{lemma}\label{lem:usc}
Let $g:\R^n\to\R^m$ be a function. The following statements hold.
\begin{enumerate}[label = {\bf (\arabic*)}, ref={\rm (\arabic*)}, leftmargin = 0ex, itemindent = 4.6ex]
\item\label{ls:usc_s_1}
If $g$ is closed, then the set-valued map $g^{-1}:\R^m\rightrightarrows\R^n,\,z\mapsto g^{-1}(z)$ {\rm (}that is, each $z$ is mapped to its fibre under $g${\rm )} is upper semicontinuous.
\item\label{ls:usc_s_2}
If $g$ is continuous and radially unbounded, then $g$ is closed.
\end{enumerate}
\end{lemma}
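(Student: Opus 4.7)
\textbf{Plan for \ref{ls:usc_s_1}.} I would use the preimage characterisation of upper semicontinuity recalled in Section~\ref{sec:preliminaries}: a set-valued map $G$ is upper semicontinuous if and only if, for every closed $Y \subset \R^n$, there exists a closed $X \subset \R^m$ such that $G^{-1}(Y) = X \cap \Delta(G)$. Applied to $G := g^{-1} : \R^m \rightrightarrows \R^n$, we have $\Delta(G) = \{z \in \R^m : g^{-1}(z) \neq \emptyset\} = g(\R^n)$. The first step is the elementary identification
\[
G^{-1}(Y) = \{z \in \Delta(G) : g^{-1}(z) \cap Y \neq \emptyset\} = g(Y),
\]
which is just unpacking definitions. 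The closedness of $g$ then says that $g(Y)$ is closed, so setting $X := g(Y)$ gives a closed set with $X \cap \Delta(G) = g(Y) \cap g(\R^n) = g(Y) = G^{-1}(Y)$, and the characterisation is verified.

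\textbf{Plan for \ref{ls:usc_s_2}.} I would argue sequentially. Let $S \subset \R^n$ be closed and take any sequence $(z_k) \subset g(S)$ with $z_k \to z \in \R^m$; choose $y_k \in S$ with $g(y_k) = z_k$. Since $(z_k)$ converges it is bounded, and radial unboundedness of $g$ forces $(y_k)$ to be bounded too: if some subsequence had $\|y_{k_j}\| \to \infty$, then $\|g(y_{k_j})\| = \|z_{k_j}\| \to \infty$, contradicting the boundedness of $(z_k)$. Extract a convergent subsequence $y_{k_j} \to y$; closedness of $S$ gives $y \in S$, and continuity of $g$ gives $z = \lim_j g(y_{k_j}) = g(y) \in g(S)$. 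Hence $g(S)$ is closed.

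\textbf{Where the work sits.} Neither item is deep. For \ref{ls:usc_s_1} the only thing that needs attention is correctly identifying $\Delta(g^{-1})$ and $G^{-1}(Y)$ for the set-valued inverse, so that the hypothesis ``$g$ is closed'' can be applied directly; this is really a verification rather than a proof. For \ref{ls:usc_s_2} the mild subtlety is exploiting the contrapositive formulation of radial unboundedness (bounded image forces bounded preimage along any sequence) to secure a convergent subsequence, after which continuity and closedness of $S$ finish the argument.
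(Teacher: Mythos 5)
Your proposal is correct and follows essentially the same route as the paper. For \ref{ls:usc_s_1} you streamline matters slightly by observing the clean set identity $G^{-1}(Y)=g(Y)$ up front (the paper derives the same conclusion by a sequential limit argument), and your proof of \ref{ls:usc_s_2} is identical to the paper's.
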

\begin{proof}
\ref{ls:usc_s_1}
Set $G:=g^{-1}$ and let $Y\subset\R^n$ be closed. As $g$ is closed, it is a routine exercise to show
that $\Delta(G)=\{\xi\in\R^m:g^{-1}(\xi)\not=\emptyset\}$ is closed. Consequently, to establish upper
semicontinuity of $G$, it is sufficient to show that
\[
G^{-1}(Y):=\{\xi\in\R^m:G(\xi)\cap Y\not=\emptyset\}=\{\xi\in\Delta(G):G(\xi)\cap Y\not=\emptyset\}
\]
is closed. Let $(\xi_k)_{k\in\N}$ be a convergent sequence in $G^{-1}(Y)$ with limit $\xi\in\R^m$. We have to show that $\xi\in G^{-1}(Y)$. To this end, let $z_k\in G(\xi_k)\cap Y\not=\emptyset$ for all $k\in\N$. As $g(z_k)=\xi_k$ for all $k\in\N$, it follows that $\xi_k\in g(Y)$. By hypothesis, $g(Y)$ is closed, and thus, $\xi\in g(Y)$. Consequently, $\xi=g(y)$ for some $y\in Y$, implying that $y\in g^{-1}(\xi)=G(\xi)$ and hence, $y\in G(\xi)\cap Y$. We conclude that $G(\xi)\cap Y\not=\emptyset$, showing that $\xi\in G^{-1}(Y)$.

\ref{ls:usc_s_2} Let $X\subset\R^n$ be closed and let $(\xi_k)_{k\in\N}$ be a convergent sequence
in $g(X)$ with limit $\xi$. Let $x_k\in X$ be such that $g(x_k)=\xi_k$ for all $k\in\N$. As
$(\xi_k)$ is bounded, so is $(x_k)$, by the radial unboundedness of $g$. Therefore there exists
a convergent subsequence $(x_{k_j})_{j\in\N}$ of $(x_k)$ with limit $x$, where $x\in X$, by the
closedness of $X$. The function $g$ is continuous, and thus,
\[
\xi=\lim_{j\to\infty} \xi_{k_j}=\lim_{j\to\infty} g(x_{k_j})=g(x).
\]
Hence, $\xi\in g(X)$, showing that $g(X)$ is closed.
\end{proof}
\subsection{General version of Gronwall's lemma}
As the version of Gronwall's lemma which has been used in Section \ref{sec:wp} is somewhat more general than what were able to find in the literature, we include the proof.
\begin{lemma}\label{lem_a:gronwall}
Let $0\leq t_0<\tau\leq\infty$, $c\geq 0$, $h\in L^p_{\rm loc}(\R_+,\R)$ be non-negative and
$y\in L^q_{\rm loc}(\R_+,\R)$ , where $1\leq p,q\leq\infty$ are such that $1/p+1/q=1$. If
\[
y(t)\leq c+\int_{t_0}^t h(s)y(s){\rm d}s,\quad\mbox{for a.e. $t\in[t_0,\tau)$},
\]
then $y(t)\leq ce^{\int_{t_0}^t h(s){\rm d}s}$ for almost every $t\in[t_0,\tau)$.
\end{lemma}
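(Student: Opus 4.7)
The plan is to reduce this version of Gronwall's lemma to the absolutely continuous setting, where the standard integrating factor argument works verbatim. First, I would observe that the integrand $hy$ is locally integrable: since $h\in L^p_{\rm loc}$ and $y\in L^q_{\rm loc}$ with $1/p+1/q=1$, Hölder's inequality yields $hy\in L^1_{\rm loc}(\R_+,\R)$, so the function
\[
z(t):=c+\int_{t_0}^t h(s)y(s)\,{\rm d}s
\]
is well-defined and absolutely continuous on $[t_0,\tau)$, with $z(t_0)=c$ and $\dot z(t)=h(t)y(t)$ for a.e.\ $t\in[t_0,\tau)$. By hypothesis, $y(t)\leq z(t)$ for a.e.\ $t\in[t_0,\tau)$, and since $h\geq 0$, it follows that $\dot z(t)\leq h(t)z(t)$ for a.e.\ $t\in[t_0,\tau)$.

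Next, since $L^p_{\rm loc}\subset L^1_{\rm loc}$, the function $H(t):=\int_{t_0}^t h(s)\,{\rm d}s$ is absolutely continuous and non-negative on $[t_0,\tau)$, with $\dot H=h$ a.e. Consider the product $w(t):=z(t)e^{-H(t)}$; it is absolutely continuous on $[t_0,\tau)$ as a product of absolutely continuous functions, and the product rule gives
\[
\dot w(t)=\big(\dot z(t)-h(t)z(t)\big)e^{-H(t)}\leq 0\quad\mbox{for a.e.\ $t\in[t_0,\tau)$.}
\]
An absolutely continuous function with non-positive derivative almost everywhere is non-increasing, so $w(t)\leq w(t_0)=c$ for all $t\in[t_0,\tau)$. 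Rearranging yields $z(t)\leq c\,e^{H(t)}$ for all $t\in[t_0,\tau)$, and the bound $y(t)\leq z(t)$ for a.e.\ $t$ then delivers the conclusion.

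There is no real obstacle here; the only point that requires mild care is the distinction between the pointwise assumption $y(t)\leq c+\int_{t_0}^t hy$ (which holds only a.e.) and the absolutely continuous majorant $z$, and the use of the product rule for absolutely continuous (rather than $C^1$) functions, both of which are standard. The argument works uniformly across all conjugate pairs $1\leq p,q\leq\infty$, since Hölder's inequality covers the endpoint cases $(p,q)=(1,\infty)$ and $(\infty,1)$ without change.
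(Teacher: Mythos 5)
Your proof is correct and takes essentially the same route as the paper's: introduce the absolutely continuous majorant $z(t)=c+\int_{t_0}^t hy$, pass to the integrating factor $w=z\,e^{-\int_{t_0}^t h}$, observe $\dot w\leq 0$ a.e., and conclude. You are merely a bit more explicit about the Hölder step ensuring $hy\in L^1_{\rm loc}$ and about justifying the product rule for absolutely continuous functions, both of which the paper leaves implicit.
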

\begin{proof}
Setting $Y(t):=c+\int_{t_0}^t h(s)y(s){\rm d}s$ for all $t\in[t_0,\tau)$,
we have that $\dot Y(t)=h(t)y(t)\leq h(t)Y(t)$ for almost every $t\in[t_0,\tau)$, and so,
\[
\dot Y(t)-h(t)Y(t)\leq 0, \quad\mbox{for a.e. $t\in[t_0,\tau)$}.
\]
The function $w:[t_0,\tau)\to\R_+$ given by
\[
w(t)=Y(t)e^{-\int_{t_0}^t h(s){\rm d}s}\quad\forall\:t\in[t_0,\tau)
\]
is absolutely continuous and
\[
\dot w(t)=\big(\dot Y(t)-h(t)Y(t)\big)e^{-\int_{t_0}^t h(s){\rm d}s}\leq 0,
\quad\mbox{for a.e. $t\in[t_0,\tau)$},
\]
showing that $w$ is non-increasing. Consequently,
\[
Y(t)e^{-\int_{t_0}^t h(s){\rm d}s}=w(t)\leq w(t_0)=Y(t_0)=c,
\quad\forall\:t\in[t_0,\tau),
\]
and thus,
\[
y(t)\leq Y(t)\leq ce^{\int_{t_0}^t h(s){\rm d}s},\quad\mbox{for a.e. $t\in[t_0,\tau)$},
\]
completing the proof.
\end{proof}

\end{document}